\newtheorem{theorem}{Theorem}
\numberwithin{theorem}{section}
\newtheorem*{theorem*}{Theorem}
\newtheorem{lemma}[theorem]{Lemma}
\newtheorem*{lemma*}{Lemma}
\newtheorem{corollary}[theorem]{Corollary}
\newtheorem*{corollary*}{Corollary}
\newtheorem{proposition}[theorem]{Proposition}
\newtheorem*{proposition*}{Proposition}
\theoremstyle{definition}
\newtheorem{definition}[theorem]{Definition}
\newtheorem*{definition*}{Definition}
\newtheorem{remark}[theorem]{Remark}
\newtheorem*{remark*}{Remark}
\newtheorem{question}[theorem]{Question}
\newtheorem*{question*}{Question}
\newcommand{\A}{{\mathcal{A}}}
\newcommand{\B}{{\mathcal{B}}}
\newcommand{\D}{{\mathcal{D}}}
\newcommand{\F}{{\mathcal{F}}}
\newcommand{\G}{{\mathcal{G}}}
\renewcommand{\P}{{\mathcal{P}}}
\newcommand{\R}{{\mathcal{R}}}
\renewcommand{\S}{{\mathcal{S}}}
\newcommand{\T}{{\mathcal{T}}}
\newcommand{\NN}{{\mathbb{N}}}
\newcommand{\PP}{{\mathbb{P}}}
\renewcommand{\SS}{{\mathbb{S}}}
\newcommand{\ZZ}{{\mathbb{Z}}}
\renewcommand{\a}{{\mathfrak{a}}}
\renewcommand{\b}{{\mathfrak{b}}}
\renewcommand{\d}{{\mathfrak{d}}}
\renewcommand{\i}{{\mathfrak{i}}}
\newcommand{\p}{{\mathfrak{p}}}
\renewcommand{\r}{{\mathfrak{r}}}
\newcommand{\s}{{\mathfrak{s}}}
\renewcommand{\t}{{\mathfrak{t}}}
\renewcommand{\u}{{\mathfrak{u}}}
\DeclareMathOperator{\dom}{dom}
\DeclareMathOperator{\ran}{ran}
\DeclareMathOperator{\restr}{\upharpoonright}
\DeclareMathOperator{\concat}{{^\smallfrown}}
\DeclareMathOperator{\partialto}{{\overset{\text{part}}{\to}}}
\newcommand{\simpleset}[1]{{\{{#1}\}}}
\newcommand{\simpleseq}[1]{{\langle{#1}\rangle}}
\newcommand{\set}[2]{{\{ {#1} \mid {#2} \}}}
\newcommand{\seq}[2]{{\langle {#1} \mid {#2} \rangle}}
\DeclareMathOperator{\extends}{{\mathbin{\leq}}}
\DeclareMathOperator{\forces}{{ \, \Vdash \, }}
\DeclareMathOperator{\notforces}{{ \, \not\Vdash \, }}
\newcommand{\gen}{{\dot{G}}}
\newcommand{\finsubset}[1]{{[#1]^{<\omega}}}
\newcommand{\infsubset}[1]{{[#1]^{\omega}}}
\DeclareMathOperator{\aE}{\a_{\text{\normalfont e}}}
\DeclareMathOperator{\aG}{\a_{\text{\normalfont g}}}
\DeclareMathOperator{\aT}{\a_{\text{\normalfont T}}}
\DeclareMathOperator{\bairespace}{{^\omega \omega}}
\DeclareMathOperator{\cantorspace}{{^\omega 2}}
\DeclareMathOperator{\cantorseqspace}{{^\omega}(\cantorspace)}
\DeclareMathOperator{\finbairespace}{{^{<\omega} \omega}}
\DeclareMathOperator{\fincantorspace}{{^{<\omega} 2}}
\DeclareMathOperator{\finseqcantorspace}{{^{<\omega} (\fincantorspace)}}
\DeclareMathOperator{\code}{code}
\DeclareMathOperator{\eoi}{EoI}
\DeclareMathOperator{\dominatedby}{{<^*}}
\DeclareMathOperator{\ndominatedby}{{\not<^*}}
\DeclareMathOperator{\sucspl}{succspl}
\DeclareMathOperator{\subsequence}{\trianglelefteq}
\DeclareMathOperator{\spl}{spl}
\DeclareMathOperator{\stem}{stem}
\DeclareMathOperator{\id}{id}
\DeclareMathOperator{\fin}{fin}
\DeclareMathOperator{\Sinf}{{S_\infty}}
\DeclareMathOperator{\Sinffin}{{S_\infty^{\fin}}}
\DeclareMathOperator{\Sinfplus}{{S_\infty^{+}}}
\DeclareMathOperator{\fix}{fix}
\DeclareMathOperator{\cofin}{cofin}
\DeclareMathOperator{\defined}{{\downarrow}}
\DeclareMathOperator{\undefined}{{\uparrow}}
\DeclareMathOperator{\sign}{sign}
\title{Universally Sacks-indestructible combinatorial families of reals}
\author{V. Fischer}
\address{Institute of Mathematics, University of Vienna, Kolingasse 14-16, 1090 Vienna, Austria}
\email{vera.fischer@univie.ac.at}
\author{L. Schembecker}
\address{Institute of Mathematics, University of Vienna, Kolingasse 14-16, 1090 Vienna, Austria}
\email{lukas.schembecker@univie.ac.at}
\thanks{\emph{Acknowledgements.}: The authors would like to thank the Austrian Science Fund (FWF) for the generous support through START Grant Y1012-N35.}
\subjclass[2000]{03E35, 03E17}
\keywords{cardinal characteristics, forcing indestructibility, Sacks forcing}
\begin{document}
		
	\begin{abstract}
		We introduce the notion of an arithmetical type of combinatorial family of reals, which serves to generalize different types of families such as mad families, maximal cofinitary groups, ultrafilter bases, splitting families and other similar types of families commonly studied in combinatorial set theory.
		
		We then prove that every combinatorial family of reals of arithmetical type which is indestructible by the product of Sacks forcing $\SS^{\aleph_0}$ is in fact universally Sacks-indestructible, i.e.\ it is indestructible by any countably supported iteration or product of Sacks-forcing of any length.
		Further, under $\sf{CH}$ we present a unified construction of universally Sacks-indestructible families for various arithmetical types of families.
		In particular we prove the existence of a universally Sacks-indestructible maximal cofinitary group under $\sf{CH}$.
	\end{abstract}
	
	\maketitle
	
	\section{Introduction}\label{SEC_Intro}
	
	In combinatorial set theory there is a vast amount of different combinatorial families of reals to be studied, e.g.\ mad families, maximal cofinitary groups, maximal independent families, ultrafilter bases, unbounded families, splitting families, maximal eventually different families and many others.
	We refer to these as different types of combinatorial families of reals.
	Assume we fixed some type of combinatorial family and let $\F$ be a family of that type.
	Then, in any forcing extension we might introduce new reals, which witness that $\F$ is not a family of our fixed type any more.
	For example for a mad family $\F$ we might add a new real which has finite intersection with every element of $\F$ so that $\F$ is not maximal in the forcing extension any more.
	We call such reals intruders for $\F$ and for a forcing $\PP$ we say that $\PP$ preserves $\F$ iff there are no intruders for $\F$ after forcing with $\PP$.
	We also say $\F$ is $\PP$-indestructible.
	
	Note that the notion of an intruder for $\F$ heavily depends on the type of family at hand.
	In many examples an intruder is a real that witnesses non-maximality, e.g.\ for mad families or independent families.
	However, in other contexts an intruder can also have various other interpretations, for example:
	
	\begin{enumerate}[$\circ$]
		\item unbounded family $\F$ $\longrightarrow$ a real dominating $\F$,
		\item splitting family $\F$ $\longrightarrow$ a set not split by $\F$,
		\item ultrafilter basis $\F$ $\longrightarrow$ a set $A$ with both $A$ and $A^c$ not in the filter generated by $\F$.
	\end{enumerate}
	
	One interesting research direction is the study of which forcings may preserve which types of combinatorial families.
	Given the vast amount of possible combinations, there are various constructions of indestructible families, alternative characterizations of the indestructibility of such families and implications between different types of forcing indestructibilities.
	First, we give a small non-exhaustive overview:
	
	Maximal almost disjoint (mad) families are the most well studied case.
	In \cite{Kunen} Kunen constructed a Cohen-indestructible mad family under {\sf{CH}} and Hru\v{s}\'{a}k \cite{Hrusak} and Kurili\'{c} \cite{Kurilic} independently proved combinatorial characterizations of Cohen-indestructibility of mad families.
	These ideas have also been expanded to other types of forcings, such as Sacks, Miller, Laver and random forcing in \cite{Hrusak}\cite{SacksMAD}.
	Moreover, in \cite{SacksMAD} Brendle and Yatabe proved implications between these different types of forcing indestructibilities and also considered characterizations of iterated Sacks-indestructibility of mad families.
	
	For other types of families usually Cohen and Sacks-indestructibility are the most well-studied cases.
	In \cite{CohenMCG} Fischer, Schrittesser and Törnquist constructed a Cohen-indestructible maximal cofinitary group.
	The construction may be adapted to also obtain a Cohen-indestructible maximal eventually different (med) family.
	For a construction of a Sacks-indestructible med family, see~\cite{FischerSchrittesser}.
	In \cite{Switzer} Fischer and Switzer adapted the notion of tightness to med families and proved that it implies Cohen-indestructibility.
	For independent families Shelah \cite{ShelahUlessI} implicitly proved the existence of a Sacks-indestructible maximal independent family, also see \cite{SacksMIF1} and \cite{SacksMIF2} for an explicit construction.
	In \cite{New87} Newelski forced the existence of a product Sacks-indestructible partition of Baire space into $F_\sigma$-meager~sets.
	Further, Sacks-indestructibility of ultrafilter bases is closely related to the Halpern-Lauchli theorem \cite{HalpernLauchli}.
	Laver generalized Halpern and Lauchli's results to prove that every selective ultrafilter is product Sacks-indestructible \cite{Laver}; also see \cite{HL} for an analysis of Sacks-indestructibility of ultrafilters and reaping families.
	Of particular interest for this paper is the construction of a maximal eventually different family which is indestructible under any product or iteration of Sacks forcing of any length by Fischer and Schrittesser in \cite{FischerSchrittesser}.
	Since this property is crucial for this paper we define:
	
	\begin{definition*}
		A family $\F$ is called universally Sacks-indestructible iff $\F$ is indestructible under any product or iteration of Sacks forcing of any length.
	\end{definition*}
	
	In order to obtain such a med family Fischer and Schrittesser \cite{FischerSchrittesser} constructed a $\SS^{\aleph_0}$-indestructible med family and proved that this family is in fact universally Sacks-indestructible, where $\SS^{\aleph_0}$ is the product of Sacks forcing.
	In \cite{FischerSchembecker} the authors proved that a similar construction may also be used to construct a universally Sacks-indestructible partition of Baire space into compact sets.
	In this paper we will generalize these findings to various other types of combinatorial families.
	To this end, we introduce the notion of an arithmetical type of combinatorial family of reals and formally define the notion of an intruder (cf.\ Definition~\ref{DEF_type}), which generalizes all the different types of combinatorial families of reals mentioned above:
	
	\begin{definition*}
		An arithmetical type $\t$ (of combinatorial family of reals) is a pair of sequences $\t = ((\psi_n)_{n < \omega}, (\chi_n)_{n < \omega})$ such that both $\psi_n(w_0, w_1, \dots, w_n)$ and $\chi_n(v,w_1, \dots, w_n)$ are arithmetical formulas in $n + 1$ real parameters.
		The domain of the type $\t$ is the set
		$$
		\dom(\t) := \set{\F \subseteq \P(\bairespace)}{\forall n < \omega \ \forall \simpleset{f_0, \dots, f_n} \in [\F]^{n + 1} \text{ we have } \psi_n(f_0, \dots, f_n)}
		$$
		If $\F \in \dom(\t)$ we say $\F$ is of type $\t$.
		Now, let $\F$ be of type $\t$.
		If a real $g$ satisfies
		$$
		\forall n < \omega \forall \simpleset{f_1, \dots, f_n} \in [\F]^n \ \chi_n(g, f_1, \dots, f_n),
		$$
		then we call $g$ an intruder for $\F$.
	\end{definition*}
	
	Thus, in the notion of an arithmetical type we essentially require that what constitutes a suitable family and an intruder is definable by a sequence of arithmetical formulas in the above sense.
	We then prove the following Theorem~\ref{THM_MainTheorem}:
	
	\begin{theorem*}
		Assume that $\t$ is an arithmetical type and $\F$ is a $\SS^{\aleph_0}$-indestructible family of type $\t$.
		Then $\F$ is universally Sacks-indestructible.
	\end{theorem*}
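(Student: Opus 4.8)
The plan is to argue contrapositively, reducing an arbitrary countably supported product or iteration to the single product $\SS^{\aleph_0}$ by reflecting to a countable elementary submodel and reading the potential intruder continuously off the generic. So let $\PP$ be any countably supported product or iteration of Sacks forcing, let $p \in \PP$, and let $\dot g$ be a $\PP$-name with $p \forces \dot g \in \bairespace$ and $p \forces$ ``$\dot g$ is an intruder for $\F$''. I want to manufacture from this an $\SS^{\aleph_0}$-name that is forced to be an intruder, which contradicts the hypothesis. The two standing facts I rely on are that every such $\PP$ is proper and has the Sacks property, and that arithmetical formulas are absolute: since each $\chi_n$ is arithmetical and the parameters $f_1,\dots,f_n$ come from $\F \subseteq V$, the truth of $\chi_n(h,\vec f)$ depends only on the real $h$ and is computed identically in any transitive model containing $h,\vec f$.

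Fix a countable $M \prec H_\theta$ with $\PP, p, \dot g, \F, \t \in M$, and let $q \le p$ be an $(M,\PP)$-generic condition. For a generic $G \ni q$, every initial segment $\dot g^G \restriction k$ is decided by a condition in $G \cap M$, so $\dot g^G \in V[G\cap M]$; only the countably many Sacks coordinates of $\PP$ lying in $M$ are relevant, regardless of the length of $\PP$. Using the Sacks property and fusion, I pass below $q$ to a fusion condition for which $\dot g$ is read continuously: there is a continuous $F \in V$, coded in $M$, and a ground-model domain $D$ (a countable product of perfect trees) such that, below this condition, $\dot g = F(\vec s)$ where $\vec s$ is the sequence of generic Sacks branches through $D$. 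In the product case this is immediate, and $D$ is literally an $\SS^{\aleph_0}$-condition fed by the mutually generic coordinate reals.

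Now I exploit the hypothesis on $p$. For each $n$ and each $\vec f \in [\F]^n$ we have $q \forces \chi_n(F(\vec s), \vec f)$, so the set $A_{n,\vec f} = \set{x \in D}{\chi_n(F(x),\vec f)}$ is a ground-model Borel set forced to contain the generic branch; by the fusion (Borel pure decision) property of Sacks forcing, the conditions whose entire domain lies inside $A_{n,\vec f}$ are dense below $q$. Enumerating the countably many pairs $(n,\vec f) \in M$ and thinning at each step, a single fusion produces a condition with domain $D^* \subseteq \bigcap_{(n,\vec f)\in M} A_{n,\vec f}$, so that $\chi_n(F(x),\vec f)$ holds for \emph{every} branch $x \in D^*$ and every such $(n,\vec f)$. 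I then let $\dot h$ be the $\SS^{\aleph_0}$-name for $F$ applied to the product-generic sequence through $D^*$, and let $p'$ be the $\SS^{\aleph_0}$-condition given by $D^*$. To see $p' \forces$ ``$\dot h$ is an intruder'', suppose some $r \le p'$ and some $(n,\vec f) \in [\F]^n$ satisfied $r \forces \neg\chi_n(\dot h,\vec f)$; since $\SS^{\aleph_0}, p', \dot h, \F, \t \in M$, elementarity reflects such a counterexample into $M$, contradicting that $D^*$ realizes $\chi_n(\cdot,\vec f)$ everywhere for all $(n,\vec f)\in M$. Thus $\SS^{\aleph_0}$ adds an intruder, the desired contradiction.

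The main obstacle is the crux hidden in the reflection step for \emph{iterations} as opposed to products: obtaining $F \in V$ together with a product domain $D$ that an $\SS^{\aleph_0}$-generic can feed. In an iteration the coordinate tree at stage $i+1$ is a name depending on $\vec s \restriction (i+1)$, so the continuous reading is a priori entangled with the iteration's dependency structure. The key Sacks-specific fact I must establish is that fusion in a countably supported Sacks iteration \emph{seals} the coordinate trees to fixed ground-model perfect trees below a suitable fusion condition, so that below this condition the (reflected, countable) iteration is isomorphic to a countable product of Sacks forcing, i.e.\ to $\SS^{\aleph_0}$. Once this decoupling is in hand the iteration case collapses to the product case treated above, and mere properness is insufficient precisely here—the argument leans on pure decision and the fusion calculus peculiar to Sacks forcing.
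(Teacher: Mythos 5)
Your overall architecture --- read the potential intruder continuously off the generic sequence and shuttle between $\PP$ and $\SS^{\aleph_0}$ --- is the same as the paper's, but two of your steps fail as stated. The most serious problem is the direction of your transfer. You try to build a single $\SS^{\aleph_0}$-condition $D^*$ below which $\chi_n(F(x),\vec f)$ holds for \emph{every} branch $x$ and \emph{every} pair $(n,\vec f)$ with $\vec f\in[\F]^n$. A fusion can only accommodate countably many such pairs, and $\F$ need not be countable; your patch via elementarity does not close this gap, because $D^*$ and $\dot h$ are produced by a fusion meeting $M$-many dense sets and are therefore not elements of $M$, so a counterexample $(r,n,\vec f)$ with $r\extends D^*$ cannot be reflected into $M$. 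The paper sidesteps this entirely by transferring a \emph{single} witness in the opposite direction: $\SS^{\aleph_0}$-indestructibility supplies one $s$, one $n$ and one $\vec f$ with $s\forces\neg\chi_n(g^*(s_{\gen}),\vec f)$; Lemma~\ref{LEM_ForcingToAbsolute} turns this into the $\Pi^1_1$ statement that $\neg\chi_n(g^*(x),\vec f)$ holds for all $x\in[t]$ for some $t\extends s$, and $t$ is pulled back to a condition $r\extends q$ in $\PP$, contradicting that $r$ forces $\dot g$ to be an intruder. Note also that your appeal to a ``Borel pure decision property'' of the countable product is doing real work: for $\SS^{\aleph_0}$ this is precisely the nontrivial direction of Lemma~\ref{LEM_ForcingToAbsolute}, proved by a quantifier-by-quantifier fusion for \emph{arithmetical} formulas, and it is not a standard fact for arbitrary Borel subsets of $\cantorseqspace$.

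Second, your proposed resolution of the iteration case rests on a false claim: no fusion ``seals'' the coordinate trees of a countable support iteration to ground-model perfect trees, and the iteration is not isomorphic to $\SS^{\aleph_0}$ below any condition. Already for length $2$, the conditions whose second coordinate is a check name for a ground-model tree are not dense (take a name $\dot q$ for a perfect tree whose splitting pattern codes the first generic real; then no ground-model perfect tree is forced into $\dot q$ below any condition). What the paper actually uses is weaker and suffices: the \emph{name} $\dot e^{q,\Sigma}$ for the induced homeomorphism $[q]\to\cantorseqspace$, which lets one read $\dot g$ as $g^*\circ\dot e^{q,\Sigma}$ applied to the generic, together with the one-directional operation of taking the preimage of an $\SS^{\aleph_0}$-condition $t$ to obtain $r\extends q$ with $r\forces \dot e^{q,\Sigma}(s_{\gen}\restr\dom(q))\in[t]$. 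You correctly locate the crux of the iteration case, but the decoupling you propose to establish there is not available, and it is also not needed once the transfer is run in the paper's direction.
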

	
	For example we immediately obtain the following new facts (see Corollaries~\ref{COR_MAD}~and~\ref{COR_UANDI}):
	
	\begin{corollary*}
		Every $\SS^{\aleph_0}$-indestructible mad family is universally Sacks-indestructible.
	\end{corollary*}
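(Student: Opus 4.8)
The plan is to realize the class of almost disjoint families as the domain of a suitable arithmetical type $\t_{\mathrm{mad}}$, to check that its intruders are exactly the witnesses to non-maximality, and then to read off the corollary as an immediate instance of the Main Theorem. Throughout I identify an infinite subset $A \subseteq \omega$ with a real (say its increasing enumeration in $\bairespace$, or its characteristic function in $\cantorspace$), so that a family of such sets becomes a family of reals in the sense of Definition~\ref{DEF_type}.

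First I would write down the defining formulas. For the domain I set $\psi_0(w_0)$ to assert that $w_0$ codes an infinite subset of $\omega$, and $\psi_1(w_0, w_1)$ to assert in addition that the two coded sets have finite intersection; for $n \geq 2$ I let $\psi_n(w_0, \dots, w_n)$ be the conjunction $\bigwedge_{i < j \leq n} \psi_1(w_i, w_j)$, which adds nothing new but keeps the format uniform. Each conjunct is a statement of the form ``the coded set is infinite'' (a $\Pi^0_2$ condition) or ``the coded intersection is finite'' (a $\Sigma^0_2$ condition), hence arithmetical. A set $\F$ then lies in $\dom(\t_{\mathrm{mad}})$ precisely when every element codes an infinite set and any two distinct elements are almost disjoint, i.e.\ exactly when $\F$ is an almost disjoint family of infinite subsets of $\omega$.

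Next I would capture non-maximality through the intruder formulas. I set $\chi_0(v)$ to say that $v$ codes an infinite subset of $\omega$, $\chi_1(v, w_1)$ to say in addition that the coded sets of $v$ and $w_1$ have finite intersection, and $\chi_n$ for $n \geq 2$ the analogous conjunction $\bigwedge_{1 \leq i \leq n} \chi_1(v, w_i)$. Unwinding the definition of intruder, a real $g$ is an intruder for $\F$ iff $g$ codes an infinite set almost disjoint from every member of $\F$ — that is, exactly a real witnessing that $\F$ is not maximal. These formulas are again arithmetical, so $\t_{\mathrm{mad}} = ((\psi_n)_{n<\omega}, (\chi_n)_{n<\omega})$ is a genuine arithmetical type.

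With this type in hand the corollary is formal. A mad family $\F$ is in particular an almost disjoint family, so $\F \in \dom(\t_{\mathrm{mad}})$; and since almost disjointness and infinitude of the elements are arithmetical and hence absolute, $\F$ remains of type $\t_{\mathrm{mad}}$ in every forcing extension. By the two preceding paragraphs, ``$\F$ is $\SS^{\aleph_0}$-indestructible as a mad family'' (no forcing extension by $\SS^{\aleph_0}$ adds an infinite set almost disjoint from all of $\F$) coincides verbatim with ``$\F$ is a $\SS^{\aleph_0}$-indestructible family of type $\t_{\mathrm{mad}}$'' (no such extension adds an intruder). The Main Theorem then yields that $\F$ is universally Sacks-indestructible. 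I expect the only point requiring genuine care to be the bookkeeping of this last dictionary: verifying that the abstract notion of intruder for $\t_{\mathrm{mad}}$ really is the classical witness to non-maximality, and that the ground-model maximality distinguishing a mad family from a mere almost disjoint family is precisely the absence of intruders in $V$. Once this correspondence is pinned down, no further forcing argument is needed, since all the work is carried out by the Main Theorem.
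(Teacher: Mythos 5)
Your proposal is correct and follows essentially the same route as the paper: encode a.d.\ families as an arithmetical type, observe that intruders are exactly the witnesses to non-maximality, and invoke Theorem~\ref{THM_MainTheorem}. The only divergence is that the paper's $\psi_n$ for $n>1$ additionally asserts that $\omega \setminus \bigcup_{i \le n}\ran(w_i)$ is infinite (to exclude finite maximal a.d.\ families, which matters for the later $\sf{CH}$ construction), whereas your redundant conjunction of pairwise conditions omits this; since a mad family is infinite by definition, this does not affect the corollary at hand.
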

	
	\begin{corollary*}
		Every $\SS^{\aleph_0}$-indestructible independent family and every $\SS^{\aleph_0}$-indestructible ultrafilter is universally Sacks-indestructible.
	\end{corollary*}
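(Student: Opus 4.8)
The plan is to deduce the corollary directly from the Main Theorem by exhibiting, for each of the two kinds of family, an arithmetical type $\t$ whose domain is exactly the class of families in question and whose intruders are exactly the reals classically witnessing non-maximality (respectively, the sets undecided by the generated filter). Throughout I would identify a subset of $\omega$ with its characteristic function, an element of $\cantorspace \subseteq \bairespace$, so that the members of the families and the candidate intruders are genuine reals and may be substituted into the formulas $\psi_n, \chi_n$.

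For independent families, recall that $\{f_0, \dots, f_n\}$ is independent iff for every $\sigma \in {}^{n+1}2$ the Boolean combination $\bigcap_{i \le n} f_i^{\sigma(i)}$ is infinite (writing $w^1$ for the coded set and $w^0$ for its complement), and that a family is independent iff all of its finite subfamilies are. Accordingly I would put
\[
\psi_n(w_0, \dots, w_n) :\equiv \bigwedge_{\sigma \in {}^{n+1}2} \Big( \bigcap_{i \le n} w_i^{\sigma(i)} \text{ is infinite} \Big),
\]
and let $\chi_n(v, w_1, \dots, w_n)$ assert that $v$ codes a subset of $\omega$ and that the tuple $(v, w_1, \dots, w_n)$ satisfies the analogous conjunction over $\sigma \in {}^{n+1}2$. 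Since ``$X$ is infinite'' is $\Pi^0_2$ and each conjunction is finite, both formulas are arithmetical, so $\t$ is an arithmetical type; the finite-character fact makes $\dom(\t)$ exactly the independent families, and $g$ an intruder for $\F$ iff $\F \cup \{g\}$ is independent. Membership $g \notin \F$ is then automatic, since $\chi$ forces $g \cap f^0$ and $g^0 \cap f$ to be infinite for each $f \in \F$.

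For ultrafilter bases I would instead take $\psi_n(w_0, \dots, w_n) :\equiv$ ``$\bigcap_{i \le n} w_i$ is infinite'', so that $\dom(\t)$ is the class of families with the finite intersection property, every finite intersection being infinite (the admissible filter bases), and set
\[
\chi_n(v, w_1, \dots, w_n) :\equiv \Big( \textstyle\bigcap_i w_i \cap v \text{ is infinite} \Big) \wedge \Big( \textstyle\bigcap_i w_i \cap (\omega \setminus v) \text{ is infinite} \Big),
\]
which is again arithmetical. Here $g$ fails to be an intruder iff some finite intersection from $\F$ is almost contained in $g$ or in $\omega \setminus g$, that is, iff $g$ or its complement lies in the filter generated by $\F$; hence $\F$ has no intruder precisely when that filter is an ultrafilter, matching the interpretation of intruder from the introduction.

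In both cases the hypothesis of the corollary --- that $\F$ is $\SS^{\aleph_0}$-indestructible as a family of the relevant kind --- unwinds to exactly ``$\F \in \dom(\t)$ and $\F$ admits no intruder in the $\SS^{\aleph_0}$-extension'', so the Main Theorem applies verbatim and delivers universal Sacks-indestructibility. I expect the only real work, and hence the sole obstacle, to be this bookkeeping: confirming that the chosen $\psi_n, \chi_n$ are arithmetical and, more delicately, that the framework's notion of intruder coincides on the nose with the classical notions (extendibility of an independent family, and an undecided set for an ultrafilter). No new forcing argument should be needed, since all of the forcing content is already carried by the Main Theorem.
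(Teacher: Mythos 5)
Your proposal follows essentially the same route as the paper: both exhibit the two classes as arithmetical types (independence via every Boolean combination of the $n+1$ sets being infinite, ultrafilter bases via the strong finite intersection property) and then quote Theorem~\ref{THM_MainTheorem} verbatim. The one divergence is your ultrafilter intruder formula, which asserts that $\bigcap_i w_i$ has infinite intersection with both $v$ and $v^c$, whereas the paper's $\chi_n$ asserts $\bigcap_{i} \ran(w_i) \not\subseteq \ran(v)$ and $\bigcap_{i} \ran(w_i) \not\subseteq \ran(v)^c$; these agree for the families relevant to the corollary because a basis with all finite intersections infinite generates a filter containing the Fr\'echet filter once it is an ultrafilter, so almost-containment upgrades to containment and your version still delivers the statement.
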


	In particular, not only the med family constructed by Fischer and Schrittesser in \cite{FischerSchrittesser} is universally Sacks-indestructible, but in fact every $\SS^{\aleph_0}$-indestructible family already is.
	We also generalize the constructive part of their proof to obtain universally Sacks-indestructible families of various types under $\sf{CH}$.
	However, in order to present a unified construction, we require the following additional property (see Definition~\ref{DEF_EliminatingIntrudersLemma}):
	
	\begin{definition*}
		Let $\t$ be an arithmetical type.
		We say that $\t$ satisfies elimination of intruders and write $\eoi(\t)$ holds iff the following property is satisfied:
		If $\F$ is a countable family of type $\t$, $p \in \SS^{\aleph_0}$ and $\dot{g}$ is a name for a real such that
		$$
		p \forces \dot{g} \text{ is an intruder for } \F.
		$$
		Then there is $q \extends p$ and a real $f$ such that $\F \cup \simpleset{f}$ is of type $\t$ and
		\[
		q \forces \dot{g} \text{ is not an intruder for } \F \cup \simpleset{f}.
		\]
	\end{definition*}

	Now, if $\eoi(\t)$ is satisfied we prove a unified construction of a universally Sacks-indestructible witness under $\sf{CH}$ in Theorem~\ref{THM_SacksUniversal}:
	
	\begin{theorem*}
		Assume $\sf{CH}$ and $\eoi(\t)$ holds.
		Then there is a universally Sacks-indestructible family of type $\t$.
	\end{theorem*}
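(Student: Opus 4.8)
The plan is to invoke Theorem~\ref{THM_MainTheorem} as a black box and thereby reduce the statement to the construction of a single $\SS^{\aleph_0}$-indestructible family of type $\t$: any family $\F$ of type $\t$ admitting no intruder in any generic extension by the countable product $\SS^{\aleph_0}$ is, by that theorem, already universally Sacks-indestructible. So it suffices to build, under $\sf{CH}$, a family $\F$ of type $\t$ such that no condition of $\SS^{\aleph_0}$ forces a name to be an intruder for $\F$.

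I would construct $\F = \bigcup_{\alpha < \omega_1} \F_\alpha$ as an increasing, continuous union of countable families of type $\t$ by recursion on $\alpha < \omega_1$. Under $\sf{CH}$ the poset $\SS^{\aleph_0}$ has cardinality $\aleph_1$, so there are only $\aleph_1$ nice names for elements of $\bairespace$; since every real added by $\SS^{\aleph_0}$ is given by such a name, fix an enumeration $\seq{(p_\alpha, \dot g_\alpha)}{\alpha < \omega_1}$ of all pairs with $p_\alpha \in \SS^{\aleph_0}$ and $\dot g_\alpha$ a nice name for a real, in which every such pair occurs. Begin with $\F_0 = \emptyset$ and take unions at limit stages; as membership in $\dom(\t)$ is decided by finite subsets alone, every $\F_\alpha$ — and hence the final $\F$ — is of type $\t$, and each $\F_\alpha$ is countable.

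At the successor stage I treat the pair $(p_\alpha, \dot g_\alpha)$. If $p_\alpha \forces \dot g_\alpha$ is an intruder for $\F_\alpha$, then, since $\F_\alpha$ is countable, the hypothesis $\eoi(\t)$ supplies a condition $q_\alpha \extends p_\alpha$ and a real $f_\alpha$ with $\F_\alpha \cup \simpleset{f_\alpha}$ of type $\t$ and $q_\alpha \forces \dot g_\alpha$ is not an intruder for $\F_\alpha \cup \simpleset{f_\alpha}$; I put $\F_{\alpha + 1} = \F_\alpha \cup \simpleset{f_\alpha}$. Otherwise $p_\alpha \notforces \dot g_\alpha$ is an intruder for $\F_\alpha$, so some $q_\alpha \extends p_\alpha$ forces that $\dot g_\alpha$ is not an intruder for $\F_\alpha$, and I set $\F_{\alpha + 1} = \F_\alpha$. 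Either way I obtain $q_\alpha \extends p_\alpha$ forcing that $\dot g_\alpha$ is not an intruder for $\F_{\alpha + 1}$.

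The key point that makes this bookkeeping sufficient is monotonicity of non-intrusion: that $g$ fails to be an intruder for a family is witnessed by a single finite subset on which some $\chi_n$ fails, and such a witness survives in every larger family. Hence $q_\alpha \forces \dot g_\alpha$ is not an intruder for $\F_{\alpha + 1}$ already yields $q_\alpha \forces \dot g_\alpha$ is not an intruder for the final $\F \supseteq \F_{\alpha + 1}$. Therefore, for each nice name $\dot g$ the set $\set{q \in \SS^{\aleph_0}}{q \forces \dot g \text{ is not an intruder for } \F}$ is dense: any $p$ occurs as $p_\alpha$ in some pair $(p_\alpha, \dot g)$ and is there refined to such a $q_\alpha$. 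A dense set of conditions forcing the negation forbids any condition from forcing ``$\dot g$ is an intruder for $\F$'', so $\F$ gains no intruder in any $\SS^{\aleph_0}$-extension; thus $\F$ is $\SS^{\aleph_0}$-indestructible and Theorem~\ref{THM_MainTheorem} concludes the proof. The only real work is the per-step appeal to $\eoi(\t)$; the conceptual crux is recognizing that it is enough to make non-intrusion dense — rather than to defeat $\dot g$ below all of $p_\alpha$ at once — which is exactly what the monotonicity observation licenses, with the $\sf{CH}$ bookkeeping over nice names ensuring all conditions and names are eventually met.
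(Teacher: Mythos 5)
Your recursion is exactly the paper's: build $\F = \bigcup_{\alpha<\omega_1}\F_\alpha$ by applying $\eoi(\t)$ once per stage, use the monotonicity of non-intrusion under enlarging the family to see that defeating $\dot g_\alpha$ at stage $\alpha$ defeats it for the final $\F$, and then feed the resulting $\SS^{\aleph_0}$-indestructible family into Theorem~\ref{THM_MainTheorem}. That part is fine. But there is a genuine gap in the bookkeeping: the claim that under $\sf{CH}$ there are only $\aleph_1$ nice names for reals over $\SS^{\aleph_0}$ is false. A nice name is built from maximal antichains, and $\SS^{\aleph_0}$ is very far from ccc: it has antichains of size continuum, hence $2^{\aleph_1}\geq\aleph_2$ maximal antichains and that many nice names. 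So your enumeration $\seq{(p_\alpha,\dot g_\alpha)}{\alpha<\omega_1}$ cannot contain every pair, and the density argument at the end (``any $p$ occurs as $p_\alpha$ in some pair $(p_\alpha,\dot g)$'') breaks down for the names you missed.

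The paper's fix, which your proof is missing, is continuous reading of names (Lemma~\ref{LEM_ContinuousReadingOfNames}): for every $p$ and every name $\dot g$ for a real there is $q\extends p$ and a \emph{code} $g$ (a monotone, proper function $\finseqcantorspace\to\finbairespace$, essentially a real) with $q\forces\dot g=g^*(s_\gen)$. One therefore enumerates the $\aleph_1$ many pairs $(p_\alpha,g_\alpha)$ of conditions and \emph{codes}, runs your recursion on the names $g_\alpha^*(s_\gen)$, and in the verification first passes from an arbitrary $(p,\dot g)$ with $p\forces\text{``}\dot g$ is an intruder'' to such a pair $(q,g)$ below $p$ before locating it in the enumeration. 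With that substitution your argument goes through verbatim; without it, the cardinality count on which the whole construction rests is wrong.
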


	In Lemma~\ref{LEM_MCG} we prove that elimination of intruders indeed holds for maximal cofinitary groups, so that we may apply the previous theorem to obtain the following new result (see Corollary~\ref{COR_MCG}):
	
	\begin{corollary*}
		Under $\sf{CH}$ there is a universally Sacks-indestructible maximal cofinitary group.
	\end{corollary*}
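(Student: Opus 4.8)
The plan is to obtain the corollary as a direct application of Theorem~\ref{THM_SacksUniversal}: I will exhibit maximal cofinitary groups as the ``intruder-free'' families of a concrete arithmetical type $\t$, invoke Lemma~\ref{LEM_MCG} to see that $\eoi(\t)$ holds, and then read off the conclusion from the theorem under $\sf{CH}$.

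First I would fix the type. Identifying each permutation of $\omega$ with the real coding it, I view a family $\F \subseteq \Sinf$ as a generating set of the group $\langle \F \rangle$, which is cofinitary precisely when every nonempty reduced word in the generators either evaluates to $\id$ or has finitely many fixed points. Since any such word involves only finitely many generators, this is a condition on finite subsets of $\F$, and letting $w_e$ range over codes of reduced words I set
\[
\psi_n(f_0, \dots, f_n) :\equiv \Bigl(\bigwedge_{i \le n} f_i \in \Sinf\Bigr) \wedge \forall e\,\bigl( w_e(f_0, \dots, f_n) = \id \vee \sizeof{\fix(w_e(f_0, \dots, f_n))} < \aleph_0 \bigr),
\]
so that $\dom(\t)$ is exactly the collection of generating sets of cofinitary groups. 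As ``$f_i \in \Sinf$'', word evaluation, and finiteness of the fixed-point set are arithmetical and the quantifier over $e$ is a number quantifier, $\psi_n$ is arithmetical. For the intruder formula I would let $\chi_n(g, f_1, \dots, f_n)$ assert that $g \in \Sinf$, that $g$ differs from every word in $f_1, \dots, f_n$ (so that $g \notin \langle \F \rangle$), and that every reduced word in $g, f_1, \dots, f_n$ which uses $g$ either equals $\id$ or has finitely many fixed points; this is again arithmetical. Unwinding the definitions, a real $g$ is an intruder for $\F$ exactly when $\langle \F \cup \simpleset{g} \rangle$ is a cofinitary group properly extending $\langle \F \rangle$, i.e.\ when $g$ witnesses non-maximality. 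Hence $\F$ is a family of type $\t$ with no intruders if and only if $\F$ is a maximal cofinitary group.

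With the type in place, Lemma~\ref{LEM_MCG} supplies $\eoi(\t)$, so Theorem~\ref{THM_SacksUniversal} applies under $\sf{CH}$ and yields an intruder-free family of type $\t$ that is universally Sacks-indestructible --- precisely a universally Sacks-indestructible maximal cofinitary group. The only routine point is the bookkeeping checking that $\psi_n$ and $\chi_n$ are genuinely arithmetical and faithfully encode (maximal) cofinitariness. The real difficulty is not in the corollary but is already isolated in Lemma~\ref{LEM_MCG}: given a name $\dot g$ that some $p \in \SS^{\aleph_0}$ forces to be an intruder, one must build, by a fusion argument along $\SS^{\aleph_0}$, a ground-model permutation $f$ together with $q \extends p$ such that $\F \cup \simpleset{f}$ remains cofinitary while $q$ forces $f$ and $\dot g$ to agree on an infinite set; then a word such as $f^{-1}\dot g$ is forced to have infinitely many fixed points, so $\dot g$ is no longer an intruder for $\F \cup \simpleset{f}$. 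Balancing the preservation of cofinitariness against forcing infinitely many agreements with the name is the technical heart of the matter.
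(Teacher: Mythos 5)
Your proposal is correct and follows essentially the same route as the paper: encode maximal cofinitary groups as an arithmetical type via word evaluation and fixed-point counting, obtain $\eoi$ from Lemma~\ref{LEM_MCG} (where a forced infinite agreement between $f$ and $\dot g$ makes the word $f^{-1}\dot g$ a non-identity element with infinitely many fixed points), and conclude by Theorem~\ref{THM_SacksUniversal}. Your rendering of the intruder formulas is in fact slightly more careful than the paper's (you explicitly include the clause $g \notin \langle \F \rangle$, and you quantify over all words at once rather than distributing one word per $\psi_n$), but these are presentational differences, not a different argument.
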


	Finally, since the definition of an arithmetical type requires us to work with arithmetical formulas, we prove the following technical Lemma~\ref{LEM_ForcingToAbsolute} which is an interesting result on its own.
	Essentially, it allows us for any condition $p \in \SS^{\aleph_0}$ to translate the statement ``$p$ forces an arithmetical property of the generic sequence $s_\gen$" into an equivalent $\Pi^1_3$-statement (see Lemma~\ref{LEM_ForcingToAbsolute}):
	
	\begin{lemma*}
		Let $\chi(v_1, \dots, v_k, w_1, \dots, w_l)$ be an arithmetical formula in $k + l$ real parameters.
		Further, let $p \in \SS^{\aleph_0}$, $f_1, \dots, f_l$ be reals and $g_1, \dots, g_k$ be codes.
		Then the following are equivalent:
		\begin{enumerate}
			\item $p \forces \chi(g_1^*(s_\gen), \dots, g_k^*(s_\gen), f_1, \dots, f_l)$,
			\item $\forall q \extends p \ \exists r \extends q \ \forall x \in [r] \ \chi(g_1^*(x), \dots, g_k^*(x), f_1, \dots, f_l)$.
		\end{enumerate}
	\end{lemma*}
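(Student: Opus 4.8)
The plan is to work with the set
$A := \set{x \in \cantorseqspace}{\chi(g_1^*(x), \dots, g_k^*(x), f_1, \dots, f_l)}$.
Since each decoded map $g_i^*$ is continuous and $\chi$ is arithmetical, $A$ carries an arithmetical, hence Borel, code computed from $g_1, \dots, g_k, f_1, \dots, f_l$ entirely in the ground model. Writing $[r] = \prod_{n < \omega}[r_n] \subseteq \cantorseqspace$ for the body of a condition $r = (r_n)_{n<\omega} \in \SS^{\aleph_0}$, statement (2) asserts exactly that $\set{r}{[r] \subseteq A}$ is dense below $p$, whereas (1) asserts $p \forces s_\gen \in \dot{A}$, with $\dot{A}$ the re-evaluation of the same code in the extension. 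I would isolate two ingredients: a soft absoluteness observation and one genuinely combinatorial homogeneity lemma.

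The soft ingredient is that for any condition $r$ the statement ``$[r] \subseteq A$'', i.e.\ $\forall x\,(x \in [r] \to \chi(g_1^*(x), \dots, g_k^*(x), f_1, \dots, f_l))$, is $\Pi^1_1$ with ground-model parameters $r, g_1, \dots, g_k, f_1, \dots, f_l$, and is therefore absolute between $V$ and any extension $V[G]$ by Mostowski absoluteness; the same applies verbatim to ``$[r] \subseteq A^c$''. This already yields $(2) \Rightarrow (1)$: given a generic $G$ with $p \in G$, density of $\set{r}{[r] \subseteq A}$ below $p$ furnishes some $r \in G$ with $[r] \subseteq A$, and since $s_\gen \in [r]$ and ``$[r] \subseteq A$'' remains true in $V[G]$, we obtain $\chi(g_1^*(s_\gen), \dots, g_k^*(s_\gen), f_1, \dots, f_l)$ there. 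As $G$ was arbitrary, $p \forces \chi(\dots)$.

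For $(1) \Rightarrow (2)$ the crux is the following Homogeneity Lemma, proved by induction on the complexity of $\chi$: for every arithmetical $A$ with code in $V$ and every $q \in \SS^{\aleph_0}$ there is $r \extends q$ with $[r] \subseteq A$ or $[r] \subseteq A^c$. Granting it, fix $q \extends p$ and choose such an $r$. Were $[r] \subseteq A^c$, the absoluteness observation applied to $A^c$ together with $s_\gen \in [r]$ would give $r \forces \neg\chi(\dots)$, contradicting $r \extends p$ and (1); hence $[r] \subseteq A$, which is precisely the witness demanded by (2). Note that the lemma is ``pure'' (it never mentions the forcing relation), and the hypothesis (1) enters only to rule out the wrong side of the dichotomy, which keeps the argument clean.

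It remains to prove the Homogeneity Lemma, and this is where the real work lies. In the base case $\chi$ is quantifier-free, so $A$ is clopen in $\cantorseqspace$ (each relevant bit of each $g_i^*(x)$ depends on finitely many bits of $x$); by compactness of $[q]$ one of the two clopen pieces $[q] \cap A$, $[q] \cap A^c$ is nonempty and one shrinks the finitely many relevant coordinate trees, above suitable splitting nodes, so as to stay inside it while remaining perfect. For the inductive steps I would run a product fusion. At a step $\chi = \exists m\,\theta$ we have $A = \bigcup_m A_m$ with each $A_m$ of lower complexity; build a fusion sequence $q = q_0 \extends q_1 \extends \cdots$ whose bookkeeping enumerates all pairs (fusion cell, index $m$), at each step applying the induction hypothesis to render the current cell $A_m$-homogeneous. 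If a cell ever lands inside some $A_m \subseteq A$ we stop and return it; otherwise we push it into $A_m^c$, and the limit $r = \bigwedge_i q_i$ then has every branch avoiding every $A_m$, so $[r] \subseteq A^c$. The universal-quantifier step is dual and negation is symmetric. The main obstacle is exactly this fusion: one must arrange the product-fusion fronts for $\SS^{\aleph_0}$ so that the limit is a genuine condition (every coordinate tree perfect) and so that the diagonalization over countably many cells and indices delivers homogeneity of the entire body at the limit, not merely along the generic branch. Alternatively, one may simply quote that every arithmetical set is Sacks-measurable for the countable product, which is precisely the content of the Homogeneity Lemma.
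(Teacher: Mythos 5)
Your proof is correct in outline, but it reorganizes the argument around a different key lemma than the paper uses, so a comparison is worth recording. The paper's $(2)\Rightarrow(1)$ direction is the same $\Pi^1_1$-absoluteness argument you give. For $(1)\Rightarrow(2)$, however, the paper does \emph{not} prove an unconditional homogenization dichotomy; it inducts on the prenex quantifier structure of $\chi$ while carrying the forcing hypothesis through the induction. This makes the existential step essentially free: from $q \forces \exists n\, \psi$ one extends $q$ to decide a single witness $n$ and applies the induction hypothesis to $\psi(\cdot,n)$, with no fusion and no early-stopping bookkeeping. Fusion (via Lemma~\ref{LEM_ExtendDenseOpen}, applied to the dense open sets $D_n$ of conditions whose bodies land in $A_n$) is needed only for the universal quantifier, and the quantifier-free base case is handled by forcing the relevant finite fragment of $s_\gen$ rather than by a compactness argument. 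Your route instead isolates the ``pure'' statement that every arithmetical subset of $\cantorseqspace$ can be homogenized below any condition of $\SS^{\aleph_0}$, and invokes hypothesis $(1)$ only once to discard the $A^c$ side. That buys a cleaner, reusable measurability statement and a symmetric treatment of the two quantifiers, at the cost of running a fusion with diagonal bookkeeping in the existential case as well. Two cautions: first, the step you correctly flag as the main obstacle --- reassembling the finitely many shrunken cells $q\restr\sigma$ into a single condition $\extends_{F,n} q$ so that the limit of the fusion is a genuine member of $\SS^{\aleph_0}$ and every branch of its body passes through a processed cell for every index $m$ --- is exactly what the paper's Lemma~\ref{LEM_ExtendDenseOpen} provides (note that each set $\set{r}{[r]\subseteq A_m \text{ or } [r]\subseteq A_m^c}$ is open dense by the induction hypothesis), so this gap is fillable with the tools already in the paper, but it must actually be carried out. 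Second, your fallback of ``simply quoting'' that arithmetical sets are Sacks-measurable for the countable product is not available: that statement is precisely the content of your Homogeneity Lemma and is not an off-the-shelf citation in the form needed here, so the inductive fusion argument is not optional.
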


	Here, $s_\gen \in \cantorseqspace$ is the name for the generic sequence of Sacks-reals, the codes $g_i$ are interpreted as continuous functions $g_i^*:\cantorseqspace \to \bairespace$ and an arithmetical formula $\chi$ is a first-order formula with possibly real parameters, so that $\chi$ only contains integer quantifiers.
	
	This paper is structured as follows:
	In the second section we revisit all necessary preliminaries such as all important notions for Sacks forcing and its fusion, followed by a similar discussion for countably supported product/iteration of Sacks forcing.
	Furthermore, we will want to apply a nice version of continuous reading of names for countably supported products/iterations of Sacks forcing developed by Fischer and Schrittesser in \cite{FischerSchrittesser}, so in order to state their result we also go over some technicalities concerning coding of continuous functions.
	
	In the third section we prove the technical Lemma~\ref{LEM_ForcingToAbsolute} just mentioned and the implication from $\SS^{\aleph_0}$-indestructibility to universal Sacks-indestructibility.
	Finally, in the fourth section, we prove the existence of a universally Sacks-indestructible witness under $\sf{CH}$ given that elimination of intruders holds (Theorem~\ref{THM_SacksUniversal}).
	We then show that various different types of combinatorial families of reals fit in our framework of arithmetical types.
	Section \ref{SECSUB_MAD} covers mad families, Section \ref{SECSUB_MED} med families and Section \ref{SECSUB_ADFS} partitions of Baire space into compact sets.
	Section \ref{SECSUB_MCG} and \ref{SECSUB_ELIMINATING_MCG} handle maximal cofinitary groups. 
	There, we also reintroduce the notion of nice words as there are some inaccuracies in the literature.
	Finally, in Section \ref{SECSUB_UANDI} we cover independent families and ultrafilter bases and finish with other types of families such as unbounded, dominating, splitting and reaping families in the last Section \ref{SECSUB_OTHER}.
	For mad families, med families and maximal cofinitary groups we also provide proofs for elimination of intruders in their respective sections.
	
	\section{Preliminaries}\label{SEC_Prelim}
	
	First, we consider the basic notions and definitions used throughout this paper.
	We start with iterations and products of Sacks forcing and their fusion.
	The following lemmata are well-known, for a more detailed presentation see \cite{Kanamori} for example.
	
	\begin{definition}
		Let $T \subseteq \fincantorspace$ be a tree, i.e.\ $T$ is closed under initial subsequences.
		\begin{enumerate}
			\item For $s, t \in \fincantorspace$ we write $s \subsequence t$ iff $s$ is an initial subsequence of $t$.
			\item Let $s \in T$ then $T_s := \set{t \in T}{s \subsequence t \text{ or } t \subsequence s}$.
			\item $\spl(T) := \set{s \in T}{s \concat 0 \in T \text{ and } s \concat 1 \in T}$ is the set of all splitting nodes of $T$.
			\item $T$ is perfect iff for all $s \in T$ there is $t \in \spl(T)$ such that $s \subsequence t$.
			\item $\SS := \set{T \subseteq \fincantorspace}{T\text{ is a perfect tree}}$ ordered by inclusion is Sacks forcing.
		\end{enumerate}
	\end{definition}
	
	\begin{definition}
		Let $T \in \SS$.
		We define the fusion ordering for Sacks forcing as follows:
		\begin{enumerate}
			\item Let $s \in T$ then $\sucspl_T(s)$ is the unique minimal splitting node in $T$ extending $s$.
			\item $\stem(T) := \sucspl_T(\emptyset)$.
			\item $\spl_0 := \simpleset{\stem(T)}$ and if $\spl_n(T)$ is defined for $n < \omega$ we set
			$$
				\spl_{n + 1} := \set{\sucspl_T(s \concat i)}{s \in \spl_n(T), i \in 2}.
			$$
			$\spl_n(T)$ is called the $n$-th splitting level of $T$.
			\item Let $n < \omega$ and $S, T \in \SS$.
			We write $S \extends_n T$ iff $S \subseteq T$ and $\spl_n(S) = \spl_n(T)$.
		\end{enumerate}
	\end{definition}
	
	\begin{lemma}
		Let $\seq{T_n \in \SS}{n < \omega}$ be a sequence of trees such that $T_{n + 1} \extends_n T_n$ for all $n < \omega$.
		Then $T := \bigcap_{n < \omega}T_n \in \SS$ and $T \extends_n T_n$ for all $n < \omega$.
	\end{lemma}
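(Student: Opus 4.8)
The statement that $T := \bigcap_{n<\omega} T_n$ is a tree is immediate, since an intersection of initial-segment-closed sets is initial-segment-closed; the real content is that $T$ is perfect and that $T \extends_n T_n$. My plan is to first show that the splitting levels \emph{stabilize} along the sequence. For this I would prove the auxiliary fact that the relations $\extends_k$ refine as the index grows: if $S \extends_m T$, then $S \extends_k T$ for every $k \le m$. Granting this, the chain $T_{j+1} \extends_j T_j$ yields, for every fixed $n$ and every $m \ge n$, that $\spl_n(T_m) = \spl_n(T_n)$; write $L_n$ for this common $n$-th splitting level. The refinement fact I would prove by induction on $m$, the claim being that $\spl_m(S) = \spl_m(T)$ forces $\spl_k(S) = \spl_k(T)$ for all $k \le m$. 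The key observation is that for a perfect tree $P$ there is a canonical surjection $\pi_P \colon \spl_{k+1}(P) \to \spl_k(P)$ sending $u$ to its longest proper initial segment that is a splitting node of $P$; by the definition of $\sucspl$ this recovers the unique $v \in \spl_k(P)$ with $u = \sucspl_P(v \concat i)$, so $\spl_k(P) = \ran(\pi_P)$. In the inductive step one has $S \subseteq T$ with $\spl_{k+1}(S) = \spl_{k+1}(T) =: A$, and for $u \in A$ one checks $\pi_S(u) = \pi_T(u)$: since every splitting node of $S$ splits in $T$ we get $\pi_S(u) \subseteq \pi_T(u)$, while $\pi_T(u) =: v$ splits in $S$ because both $\sucspl_T(v \concat 0), \sucspl_T(v \concat 1) \in A \subseteq S$, and hence so do $v \concat 0, v \concat 1$; thus $v \subseteq \pi_S(u)$ and the two agree. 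Taking ranges gives $\spl_k(S) = \spl_k(T)$, and the inductive hypothesis supplies the lower levels.

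With stabilization in hand I would analyze $T$ level by level. First, $L_n \subseteq T$: each node of $L_n$ lies in $T_m$ for $m \ge n$ because $\spl_n(T_m) = L_n$, and in $T_m$ for $m \le n$ because the sequence is decreasing and $L_n \subseteq T_n$. Next, each $s \in L_n$ splits in $T$: since $s \in \spl_n(T_{n+1}) = L_n$, the nodes $\sucspl_{T_{n+1}}(s \concat i) \in \spl_{n+1}(T_{n+1}) = L_{n+1} \subseteq T$ for $i \in 2$, and as $T$ is a tree also $s \concat 0, s \concat 1 \in T$. Finally, between consecutive levels $T$ is linear: for $s \in L_n$ and $i \in 2$ the segment from $s \concat i$ up to $t_i := \sucspl_{T_{n+1}}(s \concat i)$ contains no splitting node of $T_{n+1}$, hence none of $T \subseteq T_{n+1}$, and all its nodes lie in $T$ as initial segments of $t_i \in T$; likewise $T$ is linear below the common stem $\stem(T_0)$, the unique node of $L_0$. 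From this fine structure both conclusions follow. Every node of $T$ lies either below $\stem(T_0)$ or on one of these linear segments, and in either case has a node of some $L_n$ above it; since such nodes split in $T$, the tree $T$ is perfect. Moreover the structure shows the splitting nodes of $T$ are exactly $\bigcup_{n<\omega} L_n$, arranged so that $\spl_0(T) = L_0$ and $\spl_{k+1}(T) = \{\sucspl_T(s \concat i) : s \in L_k,\ i \in 2\} = L_{k+1}$. Hence $\spl_n(T) = L_n = \spl_n(T_n)$, and together with $T \subseteq T_n$ this gives $T \extends_n T_n$ for every $n$.

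I expect the main obstacle to be the level-bookkeeping rather than any single hard idea: one must prove not merely that each $L_n$ consists of splitting nodes of $T$, but that these are \emph{precisely} the splitting nodes of $T$ and occur at exactly the $n$-th level, so that the index in $T \extends_n T_n$ comes out correct. The delicate point is ruling out extra splitting of $T$ strictly between the levels $L_n$ and $L_{n+1}$, which is exactly where the containment $T \subseteq T_{n+1}$ and the linearity of $T_{n+1}$ on those segments are used; and the refinement fact from the first paragraph is what makes the phrase ``the $n$-th splitting level stabilizes'' legitimate in the first place. Once these are nailed down, the remaining verifications are routine.
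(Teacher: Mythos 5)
The paper does not prove this lemma at all: it is listed among the ``well-known'' fusion facts with a pointer to Kanamori, so there is no in-paper argument to compare yours against. Your proof is correct and is the standard one: the refinement fact ($S \extends_{k+1} T$ implies $S \extends_k T$ via the map sending a node of $\spl_{k+1}$ to its longest proper splitting initial segment) is exactly what is needed to make ``the $n$-th splitting level stabilizes'' legitimate, given that the paper's definition of $\extends_n$ only asserts equality of the single level $\spl_n$; and the level-by-level analysis of $T$ correctly pins down $\spl_n(T)=L_n$. The only step you gloss over is the claim that every node of $T$ extending $s\concat i$ (with $s\in L_n$) actually lies on the linear segment up to $\sucspl_{T_{n+1}}(s\concat i)$: this needs the routine observation that in $T_{n+1}$ every node extending $s\concat i$ is comparable with $\sucspl_{T_{n+1}}(s\concat i)$, since otherwise the branching point would be a splitting node of $T_{n+1}$ strictly below it. With that one line added, the argument is complete.
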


	We call the sequence $\seq{T_n \in \SS}{n < \omega}$ above a fusion sequence in $\SS$ and the element $T \in \SS$ its fusion.
	
	\begin{definition}
		Let $\lambda$ be a cardinal.
		$\SS^\lambda$ is the countable support product of Sacks forcing of size $\lambda$. Moreover,
		\begin{enumerate}
			\item for $A \subseteq \SS^\lambda$ let $\bigcap A$ be the function with $\dom(\bigcap A) := \bigcup_{p \in A} \dom(p)$ and for all $\alpha < \lambda$ we have $(\bigcap A)(\alpha) := \bigcap_{p \in A} p(\alpha)$.
			Notice that we do not necessarily have $\bigcap A \in \SS^\lambda$.
			\item Let $n < \omega$, $p, q \in \SS^\lambda$ and $F \in \finsubset{\dom(q)}$.
			Write $p \extends_{F, n} q$ iff $p \extends q$ and $p(\alpha) \extends_n q(\alpha)$ for all $\alpha \in F$.
			For $\lambda = \aleph_0$ we assume every condition has full support and write $\leq_{n}$ for $\leq_{n, n}$.
		\end{enumerate}
	\end{definition}
	
	\begin{lemma}
		Let $\seq{p_n \in \SS^\lambda}{n < \omega}$ and $\seq{F_n \in \finsubset{\dom(p_n)}}{n < \omega}$ be sequences such that
		\begin{enumerate}
			\item $p_{n + 1} \extends_{F_n, n} p_n$ for all $n < \omega$.
			\item $F_{n} \subseteq F_{n + 1}$ for all $n < \omega$ and $\bigcup_{n < \omega} F_n = \bigcup_{n < \omega} \dom(p_n)$.
		\end{enumerate}
		Then $p := \bigcap_{n < \omega} p_n \in \SS^\lambda$ and $p \extends_{F_n, n} p_n$ for all $n < \omega$.
	\end{lemma}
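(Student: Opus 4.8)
The plan is to reduce the product fusion to the single-coordinate fusion lemma stated above, applied separately at each coordinate $\alpha \in \dom(p)$, and then to read off the global conclusion coordinatewise. Before starting I would record three elementary facts about the fusion ordering, all immediate from the definitions (cf.\ \cite{Kanamori}): for perfect trees $S \subseteq T$ every splitting node of $S$ is a splitting node of $T$, so $\spl(S) \subseteq \spl(T)$; every perfect $T$ satisfies $\sizeof{\spl_n(T)} = 2^n$; and if $S \subseteq T$ with $\spl_n(S) = \spl_n(T)$, then $\spl_m(S) = \spl_m(T)$ for all $m \le n$, i.e.\ $S \extends_n T$ implies $S \extends_m T$ for $m \le n$. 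The last follows from the characterization that $s \in \spl_n(T)$ iff $s$ is a splitting node of $T$ with exactly $n$ splitting nodes strictly below it, combined with $\spl(S) \subseteq \spl(T)$.

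Fix $\alpha \in \dom(p) = \bigcup_n \dom(p_n)$. Since $\bigcup_n F_n = \dom(p)$ and the $F_n$ increase, there is a least $n_\alpha$ with $\alpha \in F_{n_\alpha}$, and then $\alpha \in F_n$ for all $n \ge n_\alpha$. As $p_{n + 1} \extends p_n$ we have $p_{n + 1}(\alpha) \subseteq p_n(\alpha)$ for every $n$, so the trees $p_n(\alpha)$ are $\subseteq$-decreasing and $p(\alpha) = \bigcap_n p_n(\alpha) = \bigcap_{n \ge n_\alpha} p_n(\alpha)$. For $n \ge n_\alpha$ we moreover have $\alpha \in F_n$, hence $p_{n + 1}(\alpha) \extends_n p_n(\alpha)$. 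Re-indexing $S_k := p_{n_\alpha + k}(\alpha)$ and using that $\extends_{n_\alpha + k}$ implies $\extends_k$, the sequence $\seq{S_k}{k < \omega}$ satisfies $S_{k + 1} \extends_k S_k$, so the single-coordinate fusion lemma applies and yields $p(\alpha) = \bigcap_k S_k \in \SS$ together with $p(\alpha) \extends_k S_k$ for every $k$. Since this holds for each $\alpha$ and $\dom(p)$ is a countable union of countable supports, hence countable (and automatically fine for $\lambda = \aleph_0$ with full support), we conclude $p \in \SS^\lambda$.

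It remains to verify $p \extends_{F_n, n} p_n$ for every $n$. The inequality $p \extends p_n$ is immediate, since $p(\alpha) = \bigcap_m p_m(\alpha) \subseteq p_n(\alpha)$ for all $\alpha$. For the splitting-level clause fix $\alpha \in F_n$, so that $n \ge n_\alpha$, and I claim $\spl_n(p(\alpha)) = \spl_n(p_n(\alpha))$. First, a telescoping argument gives $\spl_n(p_m(\alpha)) = \spl_n(p_n(\alpha))$ for all $m \ge n$: for each $j$ with $n \le j$ we have $p_{j + 1}(\alpha) \extends_j p_j(\alpha)$ (as $j \ge n_\alpha$), which by the refinement fact yields $\spl_n(p_{j + 1}(\alpha)) = \spl_n(p_j(\alpha))$, and chaining these from $j = n$ to $j = m - 1$ proves the claim. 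Second, the single-coordinate conclusion $p(\alpha) \extends_k S_k$ taken at $k = n$ gives $\spl_n(p(\alpha)) = \spl_n(p_{n_\alpha + n}(\alpha))$. Combining the two with $m = n_\alpha + n \ge n$ yields $\spl_n(p(\alpha)) = \spl_n(p_n(\alpha))$, that is $p(\alpha) \extends_n p_n(\alpha)$, as required.

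The argument is essentially bookkeeping once the single-coordinate lemma is in hand; the only genuine subtlety I anticipate is the indexing shift. Because splitting preservation at coordinate $\alpha$ only begins at $n_\alpha$, applying the one-dimensional fusion lemma to the re-indexed tail $\seq{S_k}{k < \omega}$ produces the conclusion at the shifted level $n - n_\alpha$ rather than at $n$, and it is precisely the telescoping equality $\spl_n(p_m(\alpha)) = \spl_n(p_n(\alpha))$ that bridges this gap. Ensuring that the refinement property of $\extends_n$ and this telescoping are invoked at the correct levels is the main point to get right; everything else is routine.
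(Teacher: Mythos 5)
Your proof is correct. The paper states this fusion lemma without proof (it is quoted as well-known, with a pointer to Kanamori), and your coordinatewise reduction to the single-tree fusion lemma --- with the index shift at $n_\alpha$ handled by the monotonicity of $\extends_n$ in $n$ and the telescoping identity $\spl_n(p_m(\alpha)) = \spl_n(p_n(\alpha))$ for $m \ge n$ --- is exactly the standard argument one would supply.
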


	Again, we call the sequence $\seq{p_n \in \SS^\lambda}{n < \omega}$ above a fusion sequence in $\SS^{\lambda}$ for $\seq{F_n}{n < \omega}$ and the element $p \in \SS^\lambda$ its fusion.
	In order to construct such fusion sequences we use the notion of suitable functions:
	
	\begin{definition}
		Let $p \in \SS^\lambda$, $F \in \finsubset{\dom(p)}$, $n < \omega$ and $\sigma:F \to V$ be a suitable function for $p$, $F$ and $n$, i.e.\ $\sigma(\alpha) \in \spl_n(p(\alpha)) \concat 2$ for all $\alpha \in F$.
		Then we define $p \restr \sigma \in \SS^\lambda$ by
		$$
		(p \restr \sigma) (\alpha) := 
		\begin{cases}
			p(\alpha)_{\sigma(\alpha)} & \text{if } \alpha \in F, \\
			p(\alpha) & \text{otherwise}.
		\end{cases}
		$$
		Notice that for fixed $p \in \SS^\lambda$, $n < \omega$ and $F \in \finsubset{\dom(p)}$ there are only finitely many $\sigma$ which are suitable for $p$, $F$ and $n$.
		Also, if $q \extends_{F, n} p$, then $q$ and $p$ have the same suitable functions for $F$ and $n$.
		Furthermore, the set
		$$
		\set{p \restr \sigma}{\sigma:F \to V \text{ is a suitable function for } p, F \text{ and } n}
		$$
		is a maximal antichain below $p$.
		Again, if $\lambda = \aleph_0$ we just say $\sigma$ is suitable for $p$ and $n$ in case that $\sigma$ is suitable for $p$, $n$ and $n$.
	\end{definition}

	For most fusion arguments in the subsequent sections we will need the following easy lemma:
	
	\begin{lemma}\label{LEM_ExtendDenseOpen}
		Let $p \in \SS^\lambda$, $F \in\finsubset{\dom(p)}$, $n < \omega$ and $D \subseteq \SS^{\lambda}$ be open dense below $p$.
		Then there is $q \leq_{F, n} p$ such that $q \restr \sigma \in D$ for all $\sigma$ suitable for $p$, $F$ and $n$.
	\end{lemma}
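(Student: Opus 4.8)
The plan is to reduce the lemma to finitely many one-cone refinements and carry them out by a finite induction, letting the openness of $D$ do all the work of preserving earlier cones. Since $F$ is finite and each $\spl_n(p(\alpha))$ is finite, there are only finitely many functions suitable for $p$, $F$ and $n$; enumerate them as $\sigma_1, \dots, \sigma_k$. I would build a decreasing sequence $p = p_0 \extends_{F, n} p_1 \extends_{F, n} \cdots \extends_{F, n} p_k$ with $p_i \restr \sigma_i \in D$ for each $1 \le i \le k$, and then set $q := p_k$. Transitivity of $\extends_{F, n}$ gives $q \extends_{F, n} p$, and since $q \extends_{F, n} p_i$ the conditions $p_i$ and $q$ share the same suitable functions; monotonicity of the restriction operation then yields $q \restr \sigma_i \extends p_i \restr \sigma_i \in D$, so $q \restr \sigma_i \in D$ by openness of $D$ for every $i$, as required. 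No genuine fusion is needed here because the induction is finite.

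For the inductive step, suppose $p_{i-1}$ has been constructed. Because $p_{i-1} \extends_{F, n} p$, the function $\sigma_i$ is still suitable for $p_{i-1}$ and $p_{i-1} \restr \sigma_i \extends p$, so density of $D$ below $p$ furnishes some $r_i \extends p_{i-1} \restr \sigma_i$ with $r_i \in D$. The heart of the argument is to graft $r_i$ back into $p_{i-1}$ so as to obtain $p_i \extends_{F, n} p_{i-1}$ with $p_i \restr \sigma_i = r_i$. I would define $p_i$ coordinate-wise: for $\alpha \in F$ write $\sigma_i(\alpha) = s \concat j$ with $s \in \spl_n(p_{i-1}(\alpha))$ and put $p_i(\alpha) := r_i(\alpha) \cup \bigcup_\tau p_{i-1}(\alpha)_{\tau(\alpha)}$, where $\tau$ ranges over the suitable values at $\alpha$ other than $\sigma_i(\alpha)$; for $\alpha \notin F$, including coordinates in $\dom(r_i) \setminus \dom(p_{i-1})$, simply set $p_i(\alpha) := r_i(\alpha)$.

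The remaining verifications are routine, and the amalgamation step just described is the only real obstacle. Each $p_i(\alpha)$ is perfect, being a finite union of the perfect trees $r_i(\alpha)$ and $p_{i-1}(\alpha)_{\tau(\alpha)}$ glued along their common stem below the $n$-th splitting level, so $p_i \in \SS^\lambda$. Since $r_i(\alpha) \subseteq p_{i-1}(\alpha)_{\sigma_i(\alpha)}$ and the modification occurs strictly above the node $s \in \spl_n(p_{i-1}(\alpha))$, the trees $p_i(\alpha)$ and $p_{i-1}(\alpha)$ agree up to and including their $n$-th splitting level, whence $\spl_n(p_i(\alpha)) = \spl_n(p_{i-1}(\alpha))$ and $p_i \extends_{F, n} p_{i-1}$; refining the coordinates outside $F$ is harmless precisely because $\extends_{F, n}$ constrains only the coordinates in $F$. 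Finally, computing the restriction gives $(p_i \restr \sigma_i)(\alpha) = p_i(\alpha)_{\sigma_i(\alpha)} = r_i(\alpha)$ for $\alpha \in F$, the other cones contributing only the shared stem, and $(p_i \restr \sigma_i)(\alpha) = r_i(\alpha)$ for $\alpha \notin F$, so $p_i \restr \sigma_i = r_i \in D$. I expect the delicate point to be exactly checking that replacing the single cone $p_{i-1} \restr \sigma_i$ by its refinement $r_i$, while leaving the remaining cones and the common stem intact, produces a genuine condition in the $\extends_{F, n}$-relation below $p_{i-1}$ whose $\sigma_i$-restriction is recovered as $r_i$; everything else is bookkeeping.
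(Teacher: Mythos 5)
Your proposal is correct and follows essentially the same route as the paper: enumerate the finitely many suitable functions, build a finite $\extends_{F,n}$-decreasing sequence by grafting a $D$-extension of each cone $p_{i-1}\restr\sigma_i$ back into the condition via the same coordinate-wise amalgamation formula, and finish using openness of $D$. The verifications you flag as the delicate points are exactly the ones the paper relies on (and treats as immediate), so nothing is missing.
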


	\begin{proof}
		Let $\seq{\sigma_i}{i < N}$ enumerate all suitable functions for $p$, $F$ and $n$ and set $q_0 := p$. We will define a $\extends_{F, n}$-decreasing sequence $\seq{q_i}{i \leq N}$ so that all of the $q_i$ have the same suitable functions as $p$ for $F$ and $n$.
		Assume $i < N$ and $q_i$ is defined. Choose $r_i \extends q_i \restr \sigma_i$ in $D$ and define
		$$
		q_{i + 1}(\alpha) :=
		\begin{cases}
			r_i(\alpha) \cup \bigcup \set{q_i(\alpha)_s}{s \in \spl_n(q_i(\alpha)) \concat 2 \text{ and } s \neq \sigma(\alpha)} & \text{if } \alpha \in F \\
			r_i(\alpha) & \text{otherwise}
		\end{cases}
		$$
		Clearly, $q_{i + 1} \extends_{F, n} q_i$ and $q_{i + 1} \restr \sigma = r_i$. Now, set $q := q_N$ and let $\sigma$ be suitable for $p$, $F$ and $n$. Choose $i < N$ such that $\sigma = \sigma_i$. Then we have $q \restr \sigma \extends q_{i+1} \restr \sigma = r_i \in D$, so $q \restr \sigma \in D$ as $D$ is open.
	\end{proof}
	
	Next, we briefly present a simplified version of the presentation of continuous reading of names for Sacks-forcing in \cite{FischerSchrittesser} suited for our needs, see also \cite{Kechris}.
	First, we consider how to code continuous functions $f^*:\cantorseqspace \to \bairespace$ by monotone and proper functions $f:\finseqcantorspace \to \finbairespace$:
	
	\begin{definition}\
		\begin{enumerate}
			\item For $s, t \in \finseqcantorspace$ write $s \trianglelefteq t$ iff $\dom(s) \leq \dom(t)$ and for all $n \in \dom(s)$, $s(n) \trianglelefteq t(n)$.
			\item A function $f:\finseqcantorspace \to \finbairespace$ is monotone iff for all $s \trianglelefteq t \in \finseqcantorspace$, $f(s) \trianglelefteq f(t)$.
			\item A function $f:\finseqcantorspace \to \finbairespace$ is proper iff for all $x \in {^\omega (^\omega 2)}$:
			$$
				|\dom(f(x \restr n \times n))| \overset{n \to \infty}{\longrightarrow} \infty.
			$$
			\item For a monotone, proper function $f:\finseqcantorspace \to \finbairespace$ define a continuous function
			$$
				f^*:{^\omega (^\omega 2)} \to \bairespace\hbox{ via } f^*(x) := \bigcup_{n < \omega} f(x \restr n \times n).
			$$
			In this case $f$ is called a code for $f^*$.
		\end{enumerate}
	\end{definition}
	
	\begin{remark}
		Conversely, for every continuous function $f^*:{^\omega (^\omega 2)} \to \finbairespace$ there is a code for it.
		In the following a code $f$ will always refer to a monotone and proper function $f: \finseqcantorspace \to \finbairespace$.
	\end{remark}
	
	\begin{remark}
		For all $p, q \in \SS$ there is a natural bijection $\pi:\spl(p) \to \spl(q)$ which for every $n < \omega$ restricts to bijections $\pi \restr \spl_n(p): \spl_n(p) \to \spl_n(q)$ and which preserves the lexicographical ordering.
		We can extend $\pi$ to a monotone and proper function $\pi:p \to q$ in a similar sense as above.
		$\pi$ then codes a homeomorphism $\pi:[p] \to [q]$ which we call the induced homeomorphism (of $p$ and $q$).
		We usually identify both functions $\pi:p \to q$ and $\pi:[p] \to [q]$ with the same letter.
		Note that $\pi$ is indeed a homeomorphism as its inverse is given by the induced homeomorphism from $q$ to $p$.
	\end{remark}

	\begin{lemma}
		Let $p, q, r \in \SS$ such that $r \extends p$ and let $\pi:[p] \to [q]$ be the induced homeomorphism.
		Then there is $s \extends q$ such that $\pi[[r]] = [s].$
	\end{lemma}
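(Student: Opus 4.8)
The plan is to argue topologically and then translate back to trees. Recall that $r \extends p$ means $r \subseteq p$, so $[r]$ is a closed, hence compact, subset of $[p]$, and since $r$ is perfect it has no isolated points; thus $[r]$ is a nonempty perfect set. By the preceding remark $\pi \colon [p] \to [q]$ is a homeomorphism (its inverse is the homeomorphism induced by $q$ and $p$), so the image $K := \pi[[r]]$ is a compact subset of $[q]$ with no isolated points, i.e.\ again a nonempty perfect set. The whole point will be that a perfect subset of Cantor space is precisely the branch space of a perfect tree.

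Accordingly I would take the candidate tree to be the set of initial segments of branches in $K$, namely $s := \set{t \in \fincantorspace}{\exists x \in K \ t \subsequence x}$. This is clearly closed under initial subsequences, and it is nonempty since $K \neq \emptyset$. Because $K \subseteq [q]$ and $q$ is closed under initial subsequences, every $t \in s$ is an initial segment of some branch of $q$ and hence lies in $q$; thus $s \subseteq q$, i.e.\ $s \extends q$.

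It then remains to check $[s] = K$ and that $s$ is perfect. The inclusion $K \subseteq [s]$ is immediate. For $[s] \subseteq K$, let $x \in [s]$; for each $n$ there is $y_n \in K$ with $x \restr n \subsequence y_n$, so $y_n \to x$, and since $K$ is closed, $x \in K$. For perfectness, fix $t \in s$ and choose $x \in K$ with $t \subsequence x$. As $x$ is not isolated in $K$, there is $y \in K$ with $y \neq x$ but $y \restr m = x \restr m$ for some $m > \dom(t)$; the longest common initial segment $u$ of $x$ and $y$ then satisfies $t \subsequence u$, and both $u \concat 0$ and $u \concat 1$ are initial segments of branches in $K$, so $u \in \spl(s)$. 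Hence every node of $s$ extends to a splitting node and $s \in \SS$.

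The argument is essentially routine; the only point requiring care is the standard correspondence between perfect sets and perfect trees, and in particular the observation that the defining property of $\SS$ --- every node extends to a splitting node --- is exactly the tree-side rendering of ``no isolated points'', a property preserved by the homeomorphism $\pi$. I do not expect any genuine obstacle beyond this bookkeeping, since compactness yields closedness of $K$ and the homeomorphism transfers perfectness directly.
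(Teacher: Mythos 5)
Your proof is correct, and you construct exactly the same tree $s$ as the paper (the set of initial segments of elements of $\pi[[r]]$), but you verify that $s$ is perfect by a genuinely different route. The paper argues combinatorially on the trees themselves: given $u \in s$, it pulls $u$ back to a node $v \in r$, extends $v$ to a splitting node $w$ of $r$, observes that $w$ is then also a splitting node of $p$ and hence $\pi(w)$ is a splitting node of $q$, and finally pushes the two immediate extensions $w \concat 0, w \concat 1$ through $\pi$ to exhibit $\pi(w)$ as a splitting node of $s$ above $u$. This uses the specific level-and-order-preserving structure of the induced map on splitting nodes. You instead use only that $\pi$ is a homeomorphism: $[r]$ is a nonempty compact set without isolated points, these properties transfer to $K = \pi[[r]]$, and the tree of initial segments of such a set is a perfect tree with branch space $K$ (compactness, i.e.\ closedness of $K$, gives $[s] \subseteq K$; absence of isolated points gives the splitting nodes). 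Your argument is slightly more general --- it would apply to any homeomorphism of $[p]$ onto a subset of $[q]$, not just the induced one --- at the cost of invoking the standard correspondence between perfect trees and perfect sets, which you correctly identify as the only point needing care; the paper's version is more self-contained and stays entirely at the level of finite sequences. Both are complete proofs.
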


	\begin{proof}
		We define
		$$
			s := \set{u \in \fincantorspace}{\exists f \in [r] \ u \subseteq \pi(f)}.
		$$
		We have to show that $s \in \PP$.
		Clearly, $s$ is downwards closed, so let $u \in s$.
		Choose $f \in [r]$ such that $u \subseteq\pi(f)$.
		Since $\pi(f) \in [q]$ we have $u \in q$.
		By definition of the induced map choose $v \in p$ such that $v \subseteq f$ and  $u \subseteq \pi(v)$.
		Then $v \in r$, so choose $w \in r$ with $v \subseteq w$ and $w \in \spl(r)$.
		Then, also $w \in \spl(p)$ and we have $\pi(w) \in \spl(q)$ and $\pi(v) \subseteq \pi(w)$.
		Since $r \in \SS$ for $i \in 2$ we may choose $f_i \in [r]$ such that $w \concat i \subseteq f_i$.
		But $\pi(w) \concat i \subseteq \pi(f_i)$ implies $\pi(w) \concat i \in s$ for $i \in 2$, i.e.\ $\pi(w) \in \spl(s)$.
		Further, $u \subseteq \pi(v) \subseteq \pi(w)$ which proves that $s \in \SS$.
	\end{proof}

 \begin{remark}
 	Notice that $s$ is uniquely determined by the property above.
 	We call $s$ the image of $r$ under $\pi$.
 	In the dual case where $p,q,s \in \SS$ are such that $s \extends q$ and $\pi:[p] \to [q]$ is the induced homeomorphism we say that $r$ is the preimage of $s$ under $\pi$ iff $r$ is the image of $s$ under $\pi^{-1}$.
 	Here, we use that the inverse is given by the induced homeomorphism from $q$ to $p$.
 \end{remark}

	\begin{definition}
		Let $\PP$ be the countably supported iteration of Sacks forcing of length $\lambda \geq \omega$.
		Let $p \in \PP$.
		We may always assume the dense in $\PP$ property that $|\dom(p)| = \omega$ and $0 \in \dom(p)$.
		\begin{enumerate}
			\item A standard enumeration of $\dom(p)$ is a sequence
			$$
				\Sigma = \seq{\sigma_k}{k < \omega},
			$$
			such that $\sigma_0 = 0$ and $\ran(\Sigma) = \dom(p)$.
			\item Let $[p]$ be a $\PP$-name such that
			$$
				p \forces [p] = \seq{x \in {^{\dom(p)}}({^\omega} 2)}{\text{For all } \alpha \in \dom(p) \text{ we have } x(\alpha) \in [p(\alpha)]}.
			$$
			\item Let $\Sigma$ be a standard enumeration of $\dom(p)$.
			For $k < \omega$ let $\dot{e}_k^{p, \Sigma}$ be a $\PP \restr \sigma_k$-name such that
			$$
				p \restr \sigma_k \forces \dot{e}_k^{p, \Sigma} \text{ is the induced homeomorphism between } [p(\sigma_k)] \text{ and } ^\omega 2.
			$$
			Moreover, let $\dot{e}^{p, \Sigma}$ be a $\PP$-name such that
			$$
				p \forces \dot{e}^{p, \Sigma}:[p] \to {^\omega(^\omega 2)} \text{ such that } \dot{e}^{p, \Sigma}(x) = \seq{\dot{e}^{p, \Sigma}_k(x(\sigma_k))}{k < \omega} \text{ for all } x \in [p].
			$$
			\item Given $s \in \SS^{\aleph_0}$ we define the preimage $r$ of $s$ under $\dot{e}^{p, \Sigma}$ as follows.
			Let $r \in \PP$ with $\dom(r) = \dom(p)$, where for $k < \omega$ we have that $r(\sigma_k)$ is a $\PP \restr \sigma_k$-name such that
			$$
				p \restr \sigma_k \forces r(\sigma_k) \text{ is the preimage of } s(k) \text{ under } \dot{e}^{p, \Sigma}_k.
			$$
			In particular we have
			$$
				p \restr \sigma_k \forces r(\sigma_k) \extends p(\sigma_k),
			$$
			so that $r \leq p$.
			Furthermore, $r$ satisfies for every $k < \omega$
			$$
				r \forces s_\gen(\sigma_k) \in [r(\sigma_k)],
			$$
			so that by the previous discussion
			$$
				r \forces \dot{e}^{p, \Sigma}_k(s_\gen(\sigma_k)) \in [s(k)]
			$$
			Thus, we obtain
			$$
				r \forces \dot{e}^{p, \Sigma}(s_\gen \restr \dom(p)) \in [s]
			$$
		\end{enumerate}
	\end{definition}
	
	\begin{remark}
		For the countable support product of Sacks forcing we define the analogous notions. In fact, in this simpler case $[p]$, $\dot{e}^{p, \Sigma}_k$ and $\dot{e}^{p, \Sigma}$ can be defined as ground model objects.
		However, we will still treat them as names, so that we may consider both cases at the same time.
	\end{remark}
	
	\begin{definition}
		Let $\PP$ be the countable support iteration or product of Sacks forcing of any length.
		Let $q \in \PP$ and $\dot{f}$ be a $\PP$-name for a real.
		Let $\Sigma = \seq{\sigma_k}{k < \omega}$ be a standard enumeration of $\dom(q)$ and $f:\finseqcantorspace \to \finbairespace$ be a code for a continuous function $f^*:\cantorseqspace \to \bairespace$ such that
		$$
			q \forces \dot{f} = (f^* \circ \dot{e}^{q, \Sigma})(s_\gen \restr \dom(q)).
		$$
		Then we say $\dot{f}$ is read continuously below $q$ (by $f$ and $\Sigma$).
	\end{definition}
	
	\begin{lemma}[Lemma 4 of \cite{FischerSchrittesser}]\label{LEM_ContinuousReadingOfNames}
		Let $\PP$ be the countable support iteration or product of Sacks forcing of length $\lambda$.
		Suppose $p \in \PP$ and $\dot{f}$ is a $\PP$-name for a real.
		Then there is $q \extends p$ such that $\dot{f}$ is read continuously below $q$.
	\end{lemma}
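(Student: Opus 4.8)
The plan is to run a standard fusion argument that decides longer and longer initial segments of $\dot{f}$ along every branch of the fusion, and then to read the required code off the decided values. The only tool beyond fusion itself is Lemma~\ref{LEM_ExtendDenseOpen}, which allows me to pass from $p_n$ to an extension landing in a prescribed open dense set on every suitable restriction simultaneously.

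First I would set up the fusion skeleton. I build a standard enumeration $\Sigma = \seq{\sigma_k}{k < \omega}$ of the eventual support together with the sequence, arranging the finite sets $F_n \in \finsubset{\dom(p_n)}$ to be the initial segments $\set{\sigma_k}{k < n}$ of $\Sigma$, so that $F_n \subseteq F_{n + 1}$ and $\bigcup_n F_n = \bigcup_n \dom(p_n)$; this is exactly what the fusion lemma for $\SS^\lambda$ (and its iteration analogue) requires. At stage $n$ the set $D_n := \set{r \in \PP}{r \text{ decides } \dot{f} \restr n}$ is open dense below $p_n$, so Lemma~\ref{LEM_ExtendDenseOpen} applied with $F_n$ and $n$ yields $p_{n + 1} \extends_{F_n, n} p_n$ with $p_{n + 1} \restr \sigma \in D_n$ for every $\sigma$ suitable for $p_n$, $F_n$, $n$. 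Writing $t^n_\sigma \in {}^n\omega$ for the value it forces, we have $p_{n + 1} \restr \sigma \forces \dot{f} \restr n = t^n_\sigma$. Since $q \extends_{F_n, n} p_{n + 1}$ for the fusion $q := \bigcap_n p_n \in \PP$, and since suitable restrictions refine as $n$ grows, these values cohere: whenever $\tau$ suitable at level $n + 1$ refines $\sigma$ suitable at level $n$, the condition $q \restr \tau$ forces both $\dot{f} \restr n = t^n_\sigma$ and $\dot{f} \restr (n + 1) = t^{n + 1}_\tau$, whence $t^n_\sigma \subsequence t^{n + 1}_\tau$.

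Next I would extract the code. Each suitable $\sigma$ for $q$, $F_n$, $n$ is carried by the induced homeomorphisms $\dot{e}^{q, \Sigma}_k$ — which map $\spl_n(q(\sigma_k))$ onto ${}^n 2$ preserving the lexicographic order — to a finite box $\bar{u}^\sigma \in \finseqcantorspace$, namely the clopen set of $x \in \cantorseqspace$ whose coordinates pass through the images of $\sigma$. I set $f(\bar{u}^\sigma) := t^n_\sigma$ and extend this to a monotone $f:\finseqcantorspace \to \finbairespace$; monotonicity is precisely the coherence of the $t^n_\sigma$ established above, and properness holds because $\sizeof{\dom(t^n_\sigma)} = n \to \infty$. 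Finally, for generic $x \in [q]$ the branch passes through exactly one suitable $\sigma$ at each level $n$, and $q \restr \sigma \forces \dot{f} \restr n = t^n_\sigma = f(\bar{u}^\sigma)$; letting $n \to \infty$ gives $q \forces \dot{f} = (f^* \circ \dot{e}^{q, \Sigma})(s_\gen \restr \dom(q))$, i.e.\ $\dot{f}$ is read continuously below $q$ by $f$ and $\Sigma$.

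The main obstacle is the bookkeeping linking the combinatorial data $(\sigma, t^n_\sigma)$ to the analytic objects, namely verifying that the assignment $\bar{u}^\sigma \mapsto t^n_\sigma$ really defines a monotone and proper code and that the boxes $\bar{u}^\sigma$ form the intended refining system under the $\dot{e}^{q, \Sigma}_k$. In the genuine iteration case there is the extra subtlety that the $\dot{e}^{q, \Sigma}_k$ are only names, forced by $q \restr \sigma_k$ rather than ground-model functions, so the correspondence between suitable functions and boxes, and hence the definition of $f$, must be checked to go through at the level of names; the product case is cleaner, since there these homeomorphisms are genuine ground-model objects.
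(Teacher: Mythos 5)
Your overall strategy --- a fusion that decides $\dot f \restr n$ below every suitable restriction at stage $n$, followed by reading the code off the decided values via the induced homeomorphisms --- is the standard argument behind this lemma. Note, however, that the paper does not prove this statement at all: it is imported verbatim as Lemma~4 of \cite{FischerSchrittesser}, so there is no internal proof to compare against. For the countable support \emph{product} your sketch is essentially complete: suitable functions and the restrictions $q \restr \sigma$ are genuine ground-model objects, Lemma~\ref{LEM_ExtendDenseOpen} applies directly, the coherence $t^n_\sigma \subsequence t^{n+1}_\tau$ follows as you say, and the boxes $\bar u^\sigma$ determined by the order-preserving bijections from $\spl_n(q(\sigma_k))$ onto ${}^{n}2$ do yield a monotone, proper code.

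The gap is the iteration case, which you defer to a closing caveat but which is where the real work of the cited lemma lies. In the iteration, $p(\sigma_k)$ for $k>0$ is only a $\PP\restr\sigma_k$-name for a perfect tree, so $\spl_n(p(\sigma_k))$, the suitable functions $\sigma$, the restrictions $p\restr\sigma$, and hence your boxes $\bar u^\sigma$ and the assignment $\bar u^\sigma\mapsto t^n_\sigma$ are not ground-model objects; moreover, Lemma~\ref{LEM_ExtendDenseOpen} is stated and proved in this paper only for $\SS^\lambda$. The repair is to interleave the fusion with a determination step: at stage $n$ one first extends so that, below each already-chosen finite branch through the earlier coordinates, the first $n$ splitting levels of $p(\sigma_k)$ for $k\leq n$ are decided in the ground model, and only then decides $\dot f\restr n$ below each of the finitely many resulting determined branches; the coherence of the $t^n_\sigma$ and the definition of the code must then be carried out relative to these decided approximations, which is also what makes $\dot e^{q,\Sigma}$ usable in the definition of continuous reading. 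Without this step your construction of $f$ does not go through in the iteration case, and supplying it is the substantive content of the cited proof rather than a routine verification.
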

	
	\begin{remark}
		For any $p \in \PP$ and $\PP$-name $\dot{f}$ for a real it is easy to see that if $\dot{f}$ is read continuously below $p$ then for all $q \extends p$ also $\dot{f}$ is read continuously below $q$.
		Thus, the previous lemma shows that the set
		$$
			\set{q \in \PP}{\dot{f} \text{ is read continuously below } q}
		$$
		is dense open in $\PP$.
	\end{remark}
	
	\section{Main Results}\label{SEC_Main}
	
	Before we dive in into our main results, let us first consider the following application of $\Pi^1_1$-absoluteness.
	Given $p \in \SS$, real parameters $f_1, \dots, f_n$ and a $\Pi^1_1$-formula $\chi(v, w_1, \dots, w_n)$ with $n + 1$ real parameters assume that the following holds
	$$
		\forall q \extends p \ \exists r \extends q \ \forall x \in [r] \ \chi(x, f_1, \dots, f_n).
	$$
	Then we claim that also  $p \forces \chi(s_\gen, f_1, \dots, f_n)$ holds.
	Indeed, let $q \extends p$.
	By assumption choose $r \extends q$ such that
	$$
		\forall x \in [r] \ \chi(x, f_1, \dots, f_n).
	$$
	This is a $\Pi^1_1$-statement so that by $\Pi^1_1$-absoluteness
	$$
		\SS \forces \forall x \in [r] \ \chi(x, f_1, \dots, f_n).
	$$
	But we also we have $r \forces ``s_\gen \in [r]"$ so that
	$$
		r \forces \chi(s_\gen, f_1, \dots, f_n),
	$$
	which proves the statement.
	
	The main goal of this chapter will be to show that if $\chi$ is an arithmetical formula, then we can also prove the converse, namely that $p \forces \chi(s_\gen, f_1, \dots, f_n)$ implies
	$$
		\forall q \extends p \ \exists r \extends q \ \forall x \in [r] \ \chi(x, f_1, \dots, f_n).
	$$
	Even better, we will show that we also have an analogous equivalence for $\SS^{\aleph_0}$ in place of $\SS$.
	Thus, we are able to transfer arithmetical forcing statements of $\SS^{\aleph_0}$ into $\Pi^1_3$-formulas and back, which will be one of the main ingredients for Theorem~\ref{THM_MainTheorem}.
		
	\begin{lemma}\label{LEM_ForcingToAbsolute}
		Let $\chi(v_1, \dots, v_k, w_1, \dots, w_l)$ be an arithmetical formula in $k + l$ real parameters.
		Further, let $p \in \SS^{\aleph_0}$, $f_1, \dots, f_l$ be reals and $g_1, \dots, g_k$ be codes.
		Then the following are equivalent:
		\begin{enumerate}
			\item $p \forces \chi(g_1^*(s_\gen), \dots, g_k^*(s_\gen), f_1, \dots, f_l)$,
			\item $\forall q \extends p \ \exists r \extends q \ \forall x \in [r] \ \chi(g_1^*(x), \dots, g_k^*(x), f_1, \dots, f_l)$.
		\end{enumerate}
	\end{lemma}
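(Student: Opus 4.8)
The plan is to prove the two implications by rather different means. The implication $(2) \Rightarrow (1)$ is the direct generalization of the $\Pi^1_1$-absoluteness argument sketched before the lemma. For any fixed $r$, the statement $\forall x \in [r] \ \chi(g_1^*(x), \dots, g_k^*(x), f_1, \dots, f_l)$ is $\Pi^1_1$ in the parameters $r$ (i.e.\ the tree-codes of $r$), $g_1, \dots, g_k$ and $f_1, \dots, f_l$, since ``$x \in [r]$'' is coded by $r$ and $\chi$ is arithmetical. Given $q \extends p$, condition $(2)$ yields $r \extends q$ satisfying this $\Pi^1_1$-statement; by Mostowski absoluteness it continues to hold in the generic extension by $\SS^{\aleph_0}$, and since $r \forces s_\gen \in [r]$ we obtain $r \forces \chi(g_1^*(s_\gen), \dots, g_k^*(s_\gen), f_1, \dots, f_l)$. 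As $q \extends p$ was arbitrary, the conditions forcing $\chi$ are dense below $p$, hence $p \forces \chi(g_1^*(s_\gen), \dots, f_l)$.

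For $(1) \Rightarrow (2)$ I would argue by induction on the complexity of $\chi$, which, after rewriting in negation normal form, I may assume is built from atomic and negated atomic formulas using $\wedge$, $\vee$, $\exists m$ and $\forall m$ over number variables; the induction hypothesis is taken to range over all conditions and all values of the free number parameters. Write $U_\chi := \set{x \in \cantorseqspace}{\chi(g_1^*(x), \dots, g_k^*(x), f_1, \dots, f_l)}$. The basic observation is that for clopen $U_\chi$ one has $r \forces \chi(g_1^*(s_\gen), \dots, f_l)$ if and only if $[r] \subseteq U_\chi$ (using $r \forces s_\gen \in [r]$ for one direction, and shrinking $r$ into the clopen complement of $U_\chi$ for the other). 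This is exactly the base case: for quantifier-free $\chi$ only finitely many values of the reals $g_i^*(x)$ and $f_j$ are queried, and continuity of the $g_i^*$ makes $U_\chi$ clopen, so $(1) \Rightarrow (2)$ is immediate, as $p \forces \chi$ gives $[p] \subseteq U_\chi$ and hence $[q] \subseteq U_\chi$ for every $q \extends p$.

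All connectives other than the universal number quantifier are handled without fusion. For $\chi = \theta_1 \wedge \theta_2$, given $q \extends p$, I would apply the induction hypothesis to $\theta_1$ to obtain $r_1 \extends q$ with $[r_1] \subseteq U_{\theta_1}$, and then apply it to $\theta_2$ below $r_1$ to obtain $r \extends r_1$ with $[r] \subseteq U_{\theta_2} \cap U_{\theta_1} = U_\chi$. For $\chi = \theta_1 \vee \theta_2$ and for $\chi = \exists m \, \theta(m)$ I would first refine $q$ to a condition $q'$ deciding the disjunct, respectively deciding a witness $m_0$, so that $q' \forces \theta_i$ (resp.\ $q' \forces \theta(m_0)$), and then invoke the induction hypothesis below $q'$; the resulting $r$ satisfies $[r] \subseteq U_{\theta_i} \subseteq U_\chi$ (resp.\ $[r] \subseteq U_{\theta(m_0)} \subseteq U_\chi$).

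The crux, and the only place a fusion is needed, is the universal number quantifier $\chi = \forall m \, \theta(m)$. Since $p \forces \forall m \, \theta(m, \dots)$, for each $m$ we have $p \forces \theta(m, \dots)$, so by the induction hypothesis the set $D_m := \set{r \in \SS^{\aleph_0}}{[r] \subseteq U_{\theta(m)}}$ is dense below $p$, and it is open, being downward closed. Given $q \extends p$, I would build a fusion sequence $\seq{q_m}{m < \omega}$ below $q$ together with finite sets $F_m \nearrow \omega$: at stage $m$, Lemma~\ref{LEM_ExtendDenseOpen} produces $q_{m+1} \extends_{F_m, m} q_m$ with $q_{m+1} \restr \sigma \in D_m$ for every $\sigma$ suitable for $q_m$, $F_m$ and $m$. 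Because these restrictions $\set{q_{m+1} \restr \sigma}{\sigma \text{ suitable}}$ form a maximal antichain below $q_{m+1}$ whose sets $[q_{m+1}\restr\sigma]$ cover $[q_{m+1}]$, this in fact gives $[q_{m+1}] \subseteq U_{\theta(m)}$, i.e.\ $q_{m+1} \in D_m$. Letting $r := \bigcap_m q_m$ be the fusion, we have $r \extends q$ and $r \extends q_{m+1} \in D_m$ for every $m$, whence $[r] \subseteq \bigcap_m U_{\theta(m)} = U_\chi$, as required. The main obstacle is precisely this step: one must collapse the countably many dense open sets $D_m$ into a single condition below $q$, and it is the product fusion machinery of Lemma~\ref{LEM_ExtendDenseOpen}, together with the covering property of the suitable restrictions, that makes this possible.
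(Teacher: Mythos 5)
Your proof is correct and follows essentially the same strategy as the paper: $\Pi^1_1$-absoluteness plus density for $(2)\Rightarrow(1)$, and for $(1)\Rightarrow(2)$ an induction on the formula whose only nontrivial step is the universal number quantifier, handled by fusing the dense open sets $D_m$ via Lemma~\ref{LEM_ExtendDenseOpen} and the covering property of the suitable restrictions. The only (harmless) differences are organizational: you induct over negation normal form with explicit $\wedge,\vee$ clauses where the paper uses prenex form and counts quantifiers, and your base case argues topologically that $U_\chi$ is clopen where the paper explicitly builds the condition $r_x$ from the codes $g_i$ --- these are two phrasings of the same finite-dependence argument.
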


	\begin{proof}
		First assume $(2)$ and let $q \extends p$.
		By assumption choose $r \extends q$ such that
		$$
			\forall x \in [r] \ \chi(g_1^*(x), \dots, g_k^*(x), f_1, \dots, f_l).
		$$
		This is a $\Pi^1_1$-statement so that $\Pi^1_1$-absoluteness implies
		$$
			\SS^{\aleph_0} \forces \forall x \in [r] \ \chi(g_1^*(x), \dots, g_k^*(x), f_1, \dots, f_l).
		$$
		But we also have $r \forces ``s_\gen \in [r]"$ which implies
		$$
			r \forces \chi(g_1^*(s_\gen), \dots, g_k^*(s_\gen), f_1, \dots, f_l).
		$$
		Thus, we proved $(1)$.\\
		
		For the other direction we may assume that all integer quantifiers are in the front of $\chi$ and do an induction over the number of quantifiers of $\chi$.
		Let $q \extends p$.
		First, we have to consider the quantifier-free case.
		Then $\chi(v_1, \dots, v_k, w_1, \dots, w_l)$ only depends on finitely many values of $v_1, \dots, v_k, w_1, \dots, w_l$.
		So choose $N$ such that $\chi(v_1, \dots, v_k, w_1, \dots, w_l)$ only depends on the values of $v_1 \restr N, \dots, v_k \restr N, w_1 \restr N, \dots, w_l \restr N$.
		Since $g_1, \dots, g_k$ are codes by $\Pi^1_1$-absoluteness we have
		$$
			q \forces \exists K \ N \subseteq \dom(g_i(s_\gen \restr K \times K)) \text{ for all } i \in \simpleset{1, \dots, k},
		$$
		so choose $r \extends q$ and $K < \omega$ such that
		$$
			r \forces N \subseteq \dom(g_i(s_\gen \restr K \times K)) \text{ for all } i \in \simpleset{1, \dots, k}.
		$$
		Now, let $x \in [r]$ and define $r_x \extends r$ by
		$$
			r_x(n) := r(n)_{x(n) \restr K},
		$$
		which is well-defined since $x(n) \restr K \in r(n)$ follows from $x \in [r]$.
		But then
		$$
			r_x \forces s_\gen \restr K \times K = x \restr K \times K,
		$$
		so by choice of $r$ and $K$ we also have
		$$
			r_x \forces g_i^*(s_\gen) \restr N = g_i^*(x) \restr N \text{ for all } i \in \simpleset{1, \dots, k}.
		$$
		But $r_x \forces \chi(g_1^*(s_\gen), \dots, g_k^*(s_\gen), f_1, \dots, f_l)$, so by choice of $N$ we obtain
		$$
			r_x \forces \chi(g_1^*(x), \dots, g_k^*(x), f_1, \dots, f_l).
		$$
		Thus, we have proven $\forall x \in [r] \ \chi(g_1^*(x), \dots, g_k^*(x), f_1, \dots, f_l)$.
		
		Next, we have to prove the induction step.
		We handle the two different quantifier cases separately.
		First, assume that $\chi \equiv \exists n \psi$, so by assumption
		$$
			q \forces \exists n \ \psi(g_1^*(s_\gen), \dots, g_k^*(s_\gen), f_1, \dots, f_l, n).
		$$
		Choose $r \extends q$ and $n < \omega$ such that 
		$$
			r \forces \psi(g_1^*(s_\gen), \dots, g_k^*(s_\gen), f_1, \dots, f_l, n).
		$$
		By induction assumption choose $s \extends r$ such that
		$$
			\forall x \in [s] \ \psi(g_1^*(x), \dots, g_k^*(x), f_1, \dots, f_l, n).
		$$
		Then, we also have
		$$
			\forall x \in [s] \ \exists n \ \psi(g_1^*(x), \dots, g_k^*(x), f_1, \dots, f_l, n).
		$$
		Thus, we have proven $\forall x \in [s] \ \chi(g_1^*(x), \dots, g_k^*(x), f_1, \dots, f_l)$.
		
		Finally, assume that $\chi \equiv \forall n\psi$.
		We construct a fusion sequence $\seq{q_n}{n < \omega}$ below $q$ as follows.
		Set $q_0 := q$.
		Assume $q_n$ is defined.
		By induction assumption the set
		$$
			D_n := \set{r \extends q_n}{\forall x \in [r] \ \psi(g_1^*(x), \dots, g_k^*(x), f_1, \dots, f_l, n)}
		$$
		is dense open below $q_n$.
		By Lemma \ref{LEM_ExtendDenseOpen} take $q_{n + 1} \extends_n q_n$ such that $q_{n + 1} \restr \sigma \in D_n$ for all $\sigma$ suitable for $q_n$ and $n$.
		Notice, that
		$$
			[q_{n + 1}] = \bigcup_{\sigma \text{ suitable for } q_n \text{ and } n} [q_{n + 1} \restr \sigma],
		$$
		since $\set{q_{n + 1} \restr \sigma}{\sigma \text{ is suitable for } q_n \text{ and } n}$ is a maximal antichain below $q_{n + 1}$.
		But this implies
		$$
			\forall x \in [q_{n + 1}] \ \psi(g_1^*(x), \dots, g_k^*(x), f_1, \dots, f_l, n),
		$$
		for if $x \in [q_{n + 1}]$ choose $\sigma$ suitable for $q_n$ and $n$ such that $x \in [q_{n + 1} \restr \sigma]$.
		Then the desired conclusion follows from $q_{n + 1} \restr \sigma \in D_{n}$.
		Finally, let $r$ be the fusion of $\seq{q_n}{n < \omega}$.
		We claim that
		$$
			\forall x \in [r] \ \forall n \ \psi(g_1^*(x), \dots, g_k^*(x), f_1, \dots, f_l, n),
		$$
		so let $x \in [r]$ and $n < \omega$.
		Then $r \extends q_{n + 1}$, so that $x \in [r] \subseteq [q_{n + 1}]$.
		So by construction of $q_{n + 1}$
		$$
			\psi(g_1^*(x), \dots, g_k^*(x), f_1, \dots, f_l, n).
		$$
		Thus, we have proven $\forall x \in [r] \ \chi(g_1^*(x), \dots, g_k^*(x), f_1, \dots, f_l)$.
	\end{proof}

	Next, we will introduce the notion of an arithmetical type.
	In chapter \ref{SEC_Applications} we will verify that many different types of combinatorial families can be put into the following form:
	
	\begin{definition}\label{DEF_type}
		An arithmetical type $\t$ (of combinatorial family of reals) is a pair of sequences $\t = ((\psi_n)_{n < \omega}, (\chi_n)_{n < \omega})$ such that both $\psi_n(w_0, w_1, \dots, w_n)$ and $\chi_n(v,w_1, \dots, w_n)$ are arithmetical formulas in $n + 1$ real parameters.
		The domain of the type $\t$ is the set
		$$
			\dom(\t) := \set{\F \subseteq \P(\bairespace)}{\forall n < \omega \ \forall \simpleset{f_0, \dots, f_n} \in [\F]^{n + 1} \text{ we have } \psi_n(f_0, \dots, f_n)}
		$$
		If $\F \in \dom(\t)$ we say $\F$ is of type $\t$.
		Now, let $\F$ be of type $\t$.
		If a real $g$ satisfies
		$$
			\forall n < \omega \forall \simpleset{f_1, \dots, f_n} \in [\F]^n \ \chi_n(g, f_1, \dots, f_n),
		$$
		then we call $g$ an intruder for $\F$.
	\end{definition}
	
	Thus, the sequence $(\psi_n)_{n < \omega}$ defines what constitutes a family of that type and the sequence $(\chi_n)_{n < \omega}$ defines which reals constitute intruders.
	Note that in some specific examples these two properties coincide, e.g. for eventually different families both $\psi_1(w_0, w_1)$ and $\chi_1(v, w_1)$ assert the eventual difference of $w_0$ (or $v$, resp.) and $w_1$ (cf.\ Section~\ref{SECSUB_MED}).
	Also, if we want no restriction of what constitutes a family of type $\t$, we set $\psi_n :\equiv \top$ for all $n < \omega$ to obtain $\dom(\t) = \P(\P(\bairespace))$.
	This will be the case for the families considered in Section~\ref{SECSUB_OTHER}.
	
	\begin{lemma}\label{LEM_typeProperties}
		Let $t$ be an arithmetical type.
		Then we have the following:
		\begin{enumerate}
			\item $\emptyset$ is of type $\t$,
			\item If $\G$ is of type $\t$ and $\F \subseteq \G$, then $\F$ is of type $\t$,
			\item Let $\delta$ be a limit ordinal. If $\seq{\F_\alpha}{\alpha < \delta}$ is an increasing sequence of families of type $\t$, then also $\F := \bigcup_{\alpha < \delta} \F_\alpha$ is a family of type $\t$.
		\end{enumerate}
	\end{lemma}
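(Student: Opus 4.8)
The plan is to verify each clause directly from Definition~\ref{DEF_type}, exploiting the crucial feature that membership in $\dom(\t)$ is determined purely by a condition on the \emph{finite} subsets of a family: $\F$ is of type $\t$ precisely when $\psi_n(f_0, \dots, f_n)$ holds for every $\{f_0, \dots, f_n\} \in [\F]^{n+1}$ and every $n < \omega$. All three clauses then reduce to bookkeeping about which finite subsets occur.

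For clause (1) I would simply note that $[\emptyset]^{n+1} = \emptyset$ for every $n < \omega$, so the defining universal condition is satisfied vacuously and $\emptyset \in \dom(\t)$. For clause (2) the key observation is that $\F \subseteq \G$ yields $[\F]^{n+1} \subseteq [\G]^{n+1}$ for every $n$. Hence any witness $\{f_0, \dots, f_n\}$ that must be checked for $\F$ is already a finite subset of $\G$, and since $\G$ is of type $\t$ the formula $\psi_n(f_0, \dots, f_n)$ holds; this establishes the condition for $\F$.

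For clause (3) I would fix an arbitrary $n < \omega$ and $\{f_0, \dots, f_n\} \in [\F]^{n+1}$, where $\F := \bigcup_{\alpha < \delta} \F_\alpha$. Each $f_i$ lies in some $\F_{\alpha_i}$, and setting $\alpha := \max\{\alpha_0, \dots, \alpha_n\} < \delta$, the monotonicity of the sequence gives $f_0, \dots, f_n \in \F_\alpha$, so that $\{f_0, \dots, f_n\} \in [\F_\alpha]^{n+1}$. Since $\F_\alpha$ is of type $\t$, we obtain $\psi_n(f_0, \dots, f_n)$. As $n$ and the finite subset were arbitrary, $\F$ is of type $\t$.

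There is essentially no obstacle to overcome: the whole content is that the type constraint is a property of finite subsets, and a finite subset of an increasing union is absorbed into a single member of the chain by taking the maximum of finitely many indices. I would note in passing that the limit-ordinal hypothesis in clause (3) is not actually needed for the argument, since the maximum of finitely many ordinals below any $\delta$ remains below $\delta$; it merely records the stage at which such unions are formed in the later recursive constructions.
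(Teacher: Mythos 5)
Your proof is correct and follows essentially the same route as the paper: clauses (1) and (2) are immediate from the definition, and for (3) one absorbs any finite subset of the union into a single $\F_\alpha$ (the paper invokes the limit hypothesis at this step, but as you note, taking the maximum of finitely many indices below $\delta$ works for any $\delta$). Your write-up is, if anything, slightly more detailed than the paper's, which dismisses (1) and (2) as obvious.
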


	\begin{proof}
		(1) and (2) are obvious.
		For (3) let $\seq{\F_\alpha}{\alpha < \delta}$ is an increasing sequence of families of type and $n < \omega$ and $\simpleset{f_0, \dots, f_n} \in [\F]^{n + 1}$.
		Since $\delta$ is a limit we may choose $\alpha < \delta$ such that $\simpleset{f_0, \dots, f_n} \in [\F_\alpha]^{n + 1}$.
		But then $\psi_n(f_0, \dots, f_n)$ holds since $\F_\alpha$ is of type $\t$.
	\end{proof}

	Also note that since the $\psi_n$ are arithmetical formulas the notion of $\dom(\t)$ is absolute, i.e. for model of set theory $M \subseteq N$ we have that $\dom(\t)^M = \dom(\t)^N \cap M$.
	Analogously, the notion of an intruders is absolute.
	However, a family may have no intruders in $M$, but some in the larger model $N$.
	Thus, we define the following:
	
	\begin{definition}
		Given a forcing $\PP$ and a family $\F$ of type $\t$ we say that $\F$ is $\PP$-indestructible or $\PP$ preserves $\F$ iff $\PP$ forces that $\F$ has no intruders.
		In particular $\F$ has no intruders in the ground model.
		If $\F$ is indestructible by any countably supported product or iteration of Sacks-forcing of any length, we say that $\F$ is universally Sacks-indestructible.
	\end{definition}

	Now, equipped with these definitions we may now prove one of our main results:

	\begin{theorem}\label{THM_MainTheorem}
		Assume $\t$ is an arithmetical type and $\F$ is a $\SS^{\aleph_0}$-indestructible family of type $\t$.
		Then $\F$ is universally Sacks-indestructible.
	\end{theorem}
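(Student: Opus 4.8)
The plan is to argue by contradiction, reducing destructibility by an arbitrary countably supported iteration or product $\PP$ of Sacks forcing to destructibility by $\SS^{\aleph_0}$, which is excluded by hypothesis. So assume $\F$ is $\SS^{\aleph_0}$-indestructible and suppose toward a contradiction that some such $\PP$ destroys $\F$; that is, there are $p \in \PP$ and a $\PP$-name $\dot{g}$ with $p \forces \dot{g}$ is an intruder for $\F$. By Lemma~\ref{LEM_ContinuousReadingOfNames} I may strengthen $p$ to some $q \extends p$ below which $\dot{g}$ is read continuously, say $q \forces \dot{g} = (g^* \circ \dot{e}^{q, \Sigma})(s_\gen \restr \dom(q))$ for a code $g$ and a standard enumeration $\Sigma$ of $\dom(q)$.

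Next I pass to $\SS^{\aleph_0}$ and consider the name $\dot{g}_0 := g^*(s_\gen)$, where now $s_\gen \in \cantorseqspace$ is the $\SS^{\aleph_0}$-generic sequence. The crux is the claim that the trivial condition forces $\dot{g}_0$ to be an intruder for $\F$, which immediately contradicts $\SS^{\aleph_0}$-indestructibility and thereby finishes the proof. Since $\F$ is a ground model set, its finite subsets are the same in every extension, so it suffices to fix $n < \omega$ and $\simpleset{f_1, \dots, f_n} \in [\F]^n$ and show $\mathbf{1} \forces_{\SS^{\aleph_0}} \chi_n(g^*(s_\gen), f_1, \dots, f_n)$. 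Note that from $p \forces \dot{g}$ is an intruder we also have $q \forces_\PP \chi_n(\dot{g}, f_1, \dots, f_n)$ for each such tuple. Crucially, because each $\dot{e}^{q, \Sigma}_k$ maps $[q(\sigma_k)]$ onto all of $^\omega 2$, the map $\dot{e}^{q, \Sigma}$ sends $[q]$ onto all of $\cantorseqspace$, which is what lets the $\SS^{\aleph_0}$-side statement be about $\mathbf{1}$ rather than a nontrivial condition.

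Suppose the claim failed. Then there would be $q' \in \SS^{\aleph_0}$ forcing $\neg\chi_n(g^*(s_\gen), f_1, \dots, f_n)$. Applying Lemma~\ref{LEM_ForcingToAbsolute} to the arithmetical formula $\neg\chi_n$ and the condition $q'$ (direction $(1) \Rightarrow (2)$, instantiating the outer quantifier at $q'$ itself) yields a subtree $s' \extends q'$ with $\forall x \in [s'] \ \neg\chi_n(g^*(x), f_1, \dots, f_n)$. I then transfer this back to $\PP$: letting $\tilde{s} \extends q$ be the preimage of $s'$ under $\dot{e}^{q, \Sigma}$, the preimage construction gives $\tilde{s} \forces_\PP \dot{e}^{q, \Sigma}(s_\gen \restr \dom(q)) \in [s']$. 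The sentence $\forall x \in [s'] \ \neg\chi_n(g^*(x), f_1, \dots, f_n)$ is $\Pi^1_1$ and holds in the ground model, hence is upward absolute to the $\PP$-extension; applying it to the generic point $\dot{e}^{q, \Sigma}(s_\gen \restr \dom(q)) \in [s']$ gives $\tilde{s} \forces_\PP \neg\chi_n(\dot{g}, f_1, \dots, f_n)$. But $\tilde{s} \extends q \extends p$ forces $\chi_n(\dot{g}, f_1, \dots, f_n)$, a contradiction. Hence no such $q'$ exists and the claim holds.

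The main obstacle is the faithful two-way transfer between $\PP$ and $\SS^{\aleph_0}$. On one side I must use continuous reading of names together with the surjectivity of $\dot{e}^{q, \Sigma}$ onto $\cantorseqspace$, so that the $\SS^{\aleph_0}$-statement really concerns the trivial condition. On the other side I need Lemma~\ref{LEM_ForcingToAbsolute} to convert an $\SS^{\aleph_0}$-forcing statement about $\neg\chi_n$ into a branch-wise $\Pi^1_1$ statement that $\Pi^1_1$-absoluteness can carry up into the $\PP$-extension along the preimage of $s'$. The only genuinely delicate point is verifying that the preimage $\tilde{s}$ under the (in the iteration case, name-valued) homeomorphism $\dot{e}^{q, \Sigma}$ indeed forces $\dot{e}^{q, \Sigma}(s_\gen \restr \dom(q)) \in [s']$, which is exactly the content of the preimage construction recorded in the preliminaries.
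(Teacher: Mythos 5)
Your proposal is correct and follows essentially the same route as the paper's proof: continuous reading of names below $q$, Lemma~\ref{LEM_ForcingToAbsolute} to turn an $\SS^{\aleph_0}$-forcing statement about $\neg\chi_n$ into a branch-wise $\Pi^1_1$ statement, $\Pi^1_1$-absoluteness to lift it to the $\PP$-extension, and the preimage of the resulting condition under $\dot{e}^{q,\Sigma}$ to contradict the intruder assumption. The only difference is organizational (you package the argument as a sub-claim that $\mathbf{1}$ forces $g^*(s_\gen)$ to be an intruder, whereas the paper directly extracts a witnessing tuple and condition from indestructibility), which is just a contrapositive rearrangement of the same argument.
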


	\begin{proof}
		Let $\PP$ be the countably supported product or iteration of Sacks-forcing of any length and assume that $\F$ is not preserved by $\PP$.
		We may assume that the length of the product or iteration is at least $\aleph_0$.
		Choose $p \in \PP$ and a $\PP$-name $\dot{g}$ for a real such that
		$$
			p \forces_\PP \ \dot{g} \text{ is an intruder for } \F.
		$$
		By Lemma \ref{LEM_ContinuousReadingOfNames} choose $q \extends p$, a standard enumeration $\Sigma$ of $\dom(q)$ and a code $g$ such that
		$$
			q \forces_\PP \ \dot{g} = g^*(\dot{e}^{q, \Sigma}(s_\gen \restr \dom(q))).
		$$
		Since $\F$ is $\SS^{\aleph_0}$-indestructible we have
		$$
			\SS^{\aleph_0} \forces_{\SS^{\aleph_0}} \ g^*(s_\gen) \text{ is not an intruder for } \F,
		$$
		which is by assumption expressed by
		$$
			\SS^{\aleph_0} \forces_{\SS^{\aleph_0}} \ \exists n < \omega \ \exists \simpleset{f_1, \dots, f_n} \in [\F]^n \ \neg\chi_n(g^*(s_\gen), f_1, \dots, f_n).
		$$
		So choose $s \in \SS^{\aleph_0}$ and $\simpleset{f_1, \dots, f_n} \in [\F]^n$ such that
		$$
			s \forces_{\SS^{\aleph_0}} \ \neg\chi_n(g^*(s_\gen), f_1, \dots, f_n).
		$$
		Since $ \chi_n$ is an arithmetical formula by Lemma~\ref{LEM_ForcingToAbsolute} choose $t \extends s$ such that
		$$
			\forall x \in [t] \ \neg\chi_n(g^*(x), f_1, \dots, f_n).
		$$
		Now, this is a $\Pi^1_1$-formula, so we obtain
		$$
			\PP \forces_\PP \ \forall x \in [t] \ \neg\chi_n(g^*(x), f_1, \dots, f_n).
		$$
		Let $r$ be the preimage of $t$ under $\dot{e}^{q, \Sigma}$, i.e.\ we have $r \extends q$ and
		$$
			r \forces_\PP \ \dot{e}^{q, \Sigma}(s_\gen \restr \dom(q)) \in [t].
		$$
		Then we get
		$$
			r \forces_\PP \ \neg\chi_n(g^*(\dot{e}^{q, \Sigma}(s_\gen \restr \dom(q)), f_1, \dots, f_n),
		$$
		which yields
		$$
			r \forces_\PP \ \neg\chi_n(\dot{g}, f_1, \dots, f_n),
		$$
		contradicting that $r$ forces $\dot{g}$ to be an intruder for $\F$.
	\end{proof}
	
	\section{Constructing Sacks-indestructible maximal combinatorial families under $\sf{CH}$}\label{SEC_Applications}
	
	In this section we show that Theorem~\ref{THM_MainTheorem} may be applied to many different combinatorial families of reals considered in combinatorial set theory.
	We also provide a unified construction of a universally Sacks-indestructible witness under $\sf{CH}$ for many of these types of families if they satisfy the following following property:
	
	\begin{definition}\label{DEF_EliminatingIntrudersLemma}
		Let $\t$ be an arithmetical type.
		We say that $\t$ satisfies elimination of intruders and write $\eoi(\t)$ holds iff the following property is satisfied:
		If $\F$ is a countable family of type $\t$, $p \in \SS^{\aleph_0}$ and $\dot{g}$ is a name for a real such that
		$$
		p \forces \dot{g} \text{ is an intruder for } \F.
		$$
		Then there is $q \extends p$ and a real $f$ such that $\F \cup \simpleset{f}$ is of type $\t$ and
		\[
		q \forces \dot{g} \text{ is not an intruder for } \F \cup \simpleset{f}.
		\]
	\end{definition}

	As the name suggests $\eoi(\t)$ essentially asserts that for every countable family $\F$ of type $\t$ and every possible $\SS^{\aleph_0}$-name $\dot{g}$ for a possible intruder for $\F$ we may extend $\F$ by one element $f$ so that $\dot{g}$ is not an intruder for this extended family any more.
	Usually, in case that some $q \leq p$ forces $``\dot{g} = f \in V"$ the conclusion of $\eoi(\t)$ holds trivially for $q := p$ and $\F \cup \simpleset{f}$ or some other canonical extension of $\F$, so that we will additionally assume that $p \forces ``\dot{g} \notin V"$ when proving $\eoi(\t)$ for some arithmetical type $\t$.
	
	Finally, under $\sf{CH}$ we may now use continuous reading of names to enumerate all possible intruders for a given family in length $\aleph_1$ to iteratively construct a family of type $\t$ which is indestructible under $\SS^{\aleph_0}$.
	Note that in the construction of the following proof we make use of Lemma~\ref{LEM_typeProperties} multiple times.
	
	\begin{theorem}
		Assume $\sf{CH}$ and $\eoi(\t)$ holds.
		Then there is a $\SS^{\aleph_0}$-indestructible family of type $\t$.
	\end{theorem}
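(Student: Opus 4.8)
The plan is to build $\F$ by a recursion of length $\omega_1$ in which each potential intruder is neutralized exactly once, using $\eoi(\t)$ at successor stages and Lemma~\ref{LEM_typeProperties} to pass through limits. First I would cut down the names I must anticipate: by Lemma~\ref{LEM_ContinuousReadingOfNames} every $\SS^{\aleph_0}$-name $\dot{g}$ for a real is, below a suitable condition, read continuously, so there are $q \extends p$ and a code $g$ with $q \forces \dot{g} = g^*(s_\gen)$. Under $\sf{CH}$ there are only $\aleph_1$ conditions in $\SS^{\aleph_0}$ and $\aleph_1$ codes, so I may fix an enumeration $\seq{(q_\alpha, g_\alpha)}{\alpha < \omega_1}$ of all pairs consisting of a condition and a code. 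I then define an $\subseteq$-increasing chain $\seq{\F_\alpha}{\alpha < \omega_1}$ of countable families of type $\t$, starting with $\F_0 := \emptyset$ (of type $\t$ by Lemma~\ref{LEM_typeProperties}(1)) and setting $\F_\delta := \bigcup_{\alpha < \delta} \F_\alpha$ at limits, which is again of type $\t$ by Lemma~\ref{LEM_typeProperties}(3). The target family is $\F := \bigcup_{\alpha < \omega_1} \F_\alpha$, of type $\t$ for the same reason.

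At the successor step, given $\F_\alpha$ and the pair $(q_\alpha, g_\alpha)$, I split into two cases. If $q_\alpha \forces g_\alpha^*(s_\gen)$ is an intruder for $\F_\alpha$, then $\F_\alpha$, $q_\alpha$ and the name $g_\alpha^*(s_\gen)$ satisfy the hypotheses of $\eoi(\t)$, so I obtain $q' \extends q_\alpha$ and a real $f$ with $\F_\alpha \cup \simpleset{f}$ of type $\t$ and $q' \forces g_\alpha^*(s_\gen)$ is not an intruder for $\F_\alpha \cup \simpleset{f}$, and I set $\F_{\alpha + 1} := \F_\alpha \cup \simpleset{f}$. Otherwise I set $\F_{\alpha + 1} := \F_\alpha$. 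Each $\F_\alpha$ with $\alpha < \omega_1$ is countable, since it grows by at most one element per successor and is a countable union at limits, so $\eoi(\t)$ applies throughout.

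The verification rests on the monotonicity of the intruder relation built into Definition~\ref{DEF_type}: as $g$ is an intruder for a family iff $\chi_n$ holds of $g$ against \emph{every} $n$-element subset, enlarging the family can only destroy intruderhood, never create it. Thus for $\F' \subseteq \G$ any intruder for $\G$ is an intruder for $\F'$, and dually ``not an intruder for $\F'$'' is inherited by every $\G \supseteq \F'$. Suppose toward a contradiction that some $p \forces \dot{g}$ is an intruder for $\F$. Reading $\dot{g}$ continuously below $p$ yields $q \extends p$ and a code $g$ with $q \forces \dot{g} = g^*(s_\gen)$; fix $\alpha$ with $(q, g) = (q_\alpha, g_\alpha)$. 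Then $q_\alpha \forces g_\alpha^*(s_\gen)$ is an intruder for $\F$, hence for the subfamily $\F_\alpha$, so the recursion took the first case and produced $q' \extends q_\alpha$ with $q' \forces g_\alpha^*(s_\gen)$ is not an intruder for $\F_{\alpha + 1}$, and therefore not for $\F \supseteq \F_{\alpha + 1}$. But $q' \extends q_\alpha$ forces $g_\alpha^*(s_\gen)$ to be an intruder for $\F$, a contradiction.

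I expect the crux to be not any single forcing computation but the structural point that lets one $\omega_1$-pass succeed even as $\F$ keeps growing: a name that is an intruder against the full $\F$ is already an intruder against the small stage $\F_\alpha$ at which its pair is enumerated, and once $\eoi(\t)$ neutralizes it there, the neutralization persists under every later enlargement. The remaining work is bookkeeping — confirming that continuous reading together with $\sf{CH}$ really leaves only $\aleph_1$ pairs to enumerate, and that countability of the stages is preserved so that $\eoi(\t)$ remains applicable at each step.
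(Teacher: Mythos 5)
Your proposal is correct and follows essentially the same route as the paper: enumerate all $\aleph_1$ pairs of conditions and codes under $\sf{CH}$, build the increasing chain using $\eoi(\t)$ at successors and Lemma~\ref{LEM_typeProperties} at limits, and derive the contradiction via continuous reading of names together with the upward/downward monotonicity of intruderhood. The only difference is presentational — you make explicit the monotonicity argument and the countability of the stages, which the paper leaves implicit.
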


	\begin{proof}
		By $\sf{CH}$ we may enumerate all pairs $\seq{(p_\alpha, g_\alpha)}{\alpha <  \aleph_1}$ of elements $p \in \SS^{\aleph_0}$ and codes $g$.
		We construct an increasing and continuous sequence $\seq{\F_\alpha}{\alpha < \aleph_1}$ of families of type $\t$ as follows:
		
		Set $\F_0 := \emptyset$.
		Now assume $\F_\alpha$ is defined and $(p_\alpha, g_\alpha)$ is given.
		If we have
		$$
			p_\alpha \notforces g_\alpha^*(s_\gen) \text{ is an intruder for } \F_\alpha,
		$$
		then set $\F_{\alpha + 1} := \F_\alpha$.
		Otherwise, we have
		$$
			p_\alpha \forces g_\alpha^*(s_\gen) \text{ is an intruder for } \F_\alpha.
		$$
		Thus, by $\eoi(\t)$ choose $q \extends p_\alpha$ and a real $f$ such that the family $\F_{\alpha + 1} := \F_\alpha \cup \simpleset{f}$ is of type $\t$ and such that
		$$
			q \forces g_\alpha^*(s_\gen) \text{ is not an intruder for } \F_{\alpha + 1}.
		$$
		Finally, we set $\F := \bigcup_{\alpha < \aleph_1} \F_\alpha$.
		We show that $\F$ is $\SS^{\aleph_0}$-indestructible.
		
		Assume not.
		Choose $p \in \SS^{\aleph_0}$ and a $\SS^{\aleph_0}$-name $\dot{g}$ for a real such that
		$$
			p \forces \dot{g} \text{ is an intruder for } \F.
		$$
		By Lemma \ref{LEM_ContinuousReadingOfNames} choose $q \extends p$ and a code $g:\finseqcantorspace \to \finbairespace$ so that
		$$
			q \forces \dot{g} = g^*(s_\gen).
		$$
		But then, the pair $(q, g)$ appeared at some step $\alpha$ in our enumeration and we extended $\F_\alpha$ using $\eoi(\t)$ at that step.
		Thus, there is $r \extends q$ with
		$$
			r \forces g^*(s_\gen) \text{ is not an intruder for } \F_{\alpha + 1},
		$$
		which yields the contradiction
		\[
			r \forces \dot{g} \text{ is not an intruder for } \F.
		\]
	\end{proof}
	
	Remember that by Theorem \ref{THM_MainTheorem} we have that $\SS^{\aleph_0}$-indestructibility actually implies universal Sacks-indestructibility, so we proved the following theorem:
	
	\begin{theorem}\label{THM_SacksUniversal}
		Assume $\sf{CH}$ and $\eoi(\t)$ holds.
		Then there is a universally Sacks-indestructible family of type $\t$.
	\end{theorem}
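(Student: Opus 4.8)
The plan is to read off the statement as an immediate consequence of the two results already in place: the preceding theorem, which under $\sf{CH}$ together with $\eoi(\t)$ produces a $\SS^{\aleph_0}$-indestructible family of type $\t$, and Theorem~\ref{THM_MainTheorem}, which upgrades $\SS^{\aleph_0}$-indestructibility of any type-$\t$ family to universal Sacks-indestructibility. Since $\t$ is assumed to be an arithmetical type and $\eoi(\t)$ is assumed to hold, both hypotheses are available simultaneously, so the work reduces to chaining the two implications.

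Concretely, I would first apply the preceding theorem to fix a family $\F$ of type $\t$ that is $\SS^{\aleph_0}$-indestructible. Recall that this $\F$ is built as the union $\F = \bigcup_{\alpha < \aleph_1} \F_\alpha$ of an increasing continuous chain obtained by diagonalizing, via $\eoi(\t)$, against an $\aleph_1$-enumeration of all pairs $(p,g)$ of conditions $p \in \SS^{\aleph_0}$ and codes $g$, and that the union remains of type $\t$ by part (3) of Lemma~\ref{LEM_typeProperties}. I would then feed this single $\F$ into Theorem~\ref{THM_MainTheorem}: its hypotheses ($\t$ arithmetical, $\F$ a $\SS^{\aleph_0}$-indestructible family of type $\t$) are met verbatim, and its conclusion is precisely that $\F$ is universally Sacks-indestructible. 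This $\F$ is the required witness.

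At this stage there is essentially no obstacle: the statement is a formal corollary of the two theorems, which is exactly why the remark preceding it already asserts that the theorem has been proven. The substantive difficulties of the paper lie before and beside this point---inside Theorem~\ref{THM_MainTheorem}, where the arithmetic-to-$\Pi^1_3$ transfer of Lemma~\ref{LEM_ForcingToAbsolute} does the genuine work, and, for any concrete application, in verifying that $\eoi(\t)$ actually holds for the type in question, which is the delicate combinatorial task carried out type-by-type in the remainder of the section.
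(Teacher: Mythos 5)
Your proposal is correct and matches the paper's proof exactly: the paper likewise obtains the result by composing the preceding theorem (which under $\sf{CH}$ and $\eoi(\t)$ yields an $\SS^{\aleph_0}$-indestructible family of type $\t$) with Theorem~\ref{THM_MainTheorem}. Nothing further is needed.
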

	
	\begin{proof}
		This follows directly by composing the previous theorem with Theorem~\ref{THM_MainTheorem}.
	\end{proof}

	Thus, the remaining objective for this paper will be to verify that many different types of combinatorial families fall into our framework of arithmetical types and to prove elimination of intruders for various different types of families.
	
	\subsection{Mad families}\label{SECSUB_MAD}
	
	We start with one of the most common example for combinatorial sets of reals - mad families.
	For this case we will explain a bit more explicitly how to phrase the definitions in a suitable way so that we may apply Theorem~\ref{THM_MainTheorem}.
	For the subsequent types of families the discussion we will just mention the necessary coding arguments, but we omit the analogous details.
	Remember the following definition:
	
	\begin{definition}
		A family $\A$ of infinite subsets of $\omega$ is almost disjoint (a.d.) iff $A \cap B$ is finite for all $A \neq B \in \A$ and for all $\A_0 \in \finsubset{\A}$ we have that $\omega \setminus \bigcup \A_0$ is infinite.
		$\A$ is called maximal (mad) iff it is maximal with respect to inclusion.
		The corresponding cardinal characteristic is the almost disjointness number $\a$:
		$$
			\a := \min{\set{\left|\A\right|}{\A \text{ is a mad family}}}.
		$$
	\end{definition}
	
	The second property in the definition of an a.d.\ family is vacuous if $\A$ is infinite, but in the finite case it allows us to exclude maximal finite a.d.\ families.	
	Note that Theorem~\ref{THM_MainTheorem} is applied to combinatorial families on the Polish space $\bairespace$, so to be more precise we code a.d.\ families in that Polish space:
	
	\begin{definition}
		A family $\F$ of reals codes an a.d.\ family iff every $f \in \F$ codes an infinite subset of $\omega$, i.e.\ $f$ is a strictly increasing function, $\ran(f) \cap \ran(g)$ is finite for all $f \neq g \in \F$ and for all $\F_0 \in \finsubset{\F}$ we have that $\omega \setminus \bigcup_{f \in \F_0} \ran(f)$ is infinite.
		$\F$ is called maximal iff it is infinite and maximal {w.r.t.} inclusion.
	\end{definition}
	
	\begin{proposition}
		Coded mad families are an arithmetical type.
	\end{proposition}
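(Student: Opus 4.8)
The plan is to exhibit the two sequences of arithmetical formulas $(\psi_n)_{n<\omega}$ and $(\chi_n)_{n<\omega}$ witnessing that coded mad families form an arithmetical type $\t$, and then to check that the induced notions of \emph{being of type $\t$} and of \emph{having no intruders} recover precisely the coded a.d.\ families and their maximality.

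First I would let $\psi_n(w_0,\dots,w_n)$ be the conjunction of three clauses: (i) each $w_i$ is strictly increasing, i.e.\ $\forall k\, w_i(k) < w_i(k+1)$; (ii) $\ran(w_i) \cap \ran(w_j)$ is finite for all $i < j \leq n$, e.g.\ $\exists m\,\forall k\,\forall l\,(w_i(k) = w_j(l) \to w_i(k) < m)$; and (iii) $\omega \setminus \bigcup_{i \leq n} \ran(w_i)$ is infinite. Each conjunct is readily arithmetical: (i) is $\Pi^0_1$, and since strict increase makes range-membership a bounded predicate, (ii) is $\Sigma^0_2$ and (iii) is $\Pi^0_2$; hence so is $\psi_n$, which is moreover symmetric in its arguments, so its evaluation on unordered $(n+1)$-subsets is well defined. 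Unwinding the definition of $\dom(\t)$, a family $\F$ satisfies $\psi_n$ on every member of $[\F]^{n+1}$ for all $n$ exactly when every element of $\F$ codes an infinite set, the ranges are pairwise almost disjoint, and every finite subfamily has infinite complement; that is, $\dom(\t)$ is precisely the class of coded a.d.\ families.

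Next I would let $\chi_n(v,w_1,\dots,w_n)$ assert that $v$ is strictly increasing and that $\ran(v) \cap \ran(w_i)$ is finite for each $i \leq n$, again arithmetical. Then a real $g$ is an intruder for $\F$ iff $g$ codes an infinite set whose range is almost disjoint from $\ran(f)$ for every $f \in \F$, i.e.\ $g$ witnesses non-maximality. The one verification worth spelling out is that for $\F \in \dom(\t)$ the absence of intruders coincides with $\F$ being a coded mad family. If $\F$ is finite, clause (iii) makes $\omega \setminus \bigcup_{f \in \F}\ran(f)$ infinite, and any increasing enumeration of an infinite subset of this complement is an intruder; so an intruder-free family is infinite. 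For infinite $\F$, I would check that an infinite set $A$ almost disjoint from every member of $\F$ always yields an a.d.\ extension $\F \cup \simpleset{A}$: given finitely many members $B_1,\dots,B_m \in \F$ together with $A$, any further $C \in \F \setminus \simpleset{B_1,\dots,B_m}$ (which exists since $\F$ is infinite) meets $B_1 \cup \dots \cup B_m \cup A$ only finitely and is itself infinite, so the complement of that union stays infinite --- the complement clause is therefore automatic. Hence no intruders $\iff$ $\F$ infinite and maximal $\iff$ $\F$ coded mad, and since $\SS^{\aleph_0}$-indestructibility already includes having no intruders in the ground model, the framework recovers mad families exactly.

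The only genuinely non-formal point, and thus the main obstacle, is this last equivalence: one must confirm that maximality of an \emph{infinite} a.d.\ family is captured solely by the ``no infinite almost-disjoint extension'' condition encoded in the $\chi_n$, so that the complement-infinite requirement need not be carried by $\chi_n$ but follows from infinitude of $\F$ (and infinitude itself follows from clause (iii) once intruders are ruled out). Everything else is routine bookkeeping confirming that the displayed formulas are arithmetical and that $\dom(\t)$ unwinds to the coded a.d.\ families.
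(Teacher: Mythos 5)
Your proof is correct and takes essentially the same route as the paper: one exhibits arithmetical formulas expressing ``strictly increasing'', ``ranges almost disjoint'', and ``complement of the union infinite'' for the $\psi_n$, and ``increasing and almost disjoint from each member'' for the $\chi_n$. The only differences are cosmetic --- you bundle all three clauses into every $\psi_n$ where the paper distributes them over $\psi_0$, $\psi_1$ and $\psi_n$ for $n>1$, and your careful check that intruder-freeness coincides with maximality (including the observation that the complement clause is automatic for infinite a.d.\ families) is what the paper compresses into ``clearly''.
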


	\begin{proof}
		We define the formula $\psi_0(w_0)$ to be
		$$
			\forall n \forall m (n < m \text{ implies } w_0(n) < w_0(m)),
		$$
		expressing \textquoteleft $w_0$ codes an infinite subset of $\omega$'.
		Further, $\psi_1(w_0, w_1)$ is defined as
		$$
			\exists N \forall n \forall m (n > N \text{ implies } w_0(n) \neq w_1(m)),
		$$
		expressing \textquoteleft $\ran(w_0) \cap \ran(w_1)$ is finite'.
		Finally, for $n > 1$ we define $\psi_n(w_0, \dots, w_n)$ by
		$$
			\forall N \exists n \forall m (n > N \text{ and } \bigwedge_{i = 0}^n w_i(m) \neq n),
		$$
		expressing \textquoteleft $\omega \setminus \bigcup_{i = 0}^n \ran(w_i)$ is infinite'.
		Analogously, we define the formula $\chi_0(v)$ to be
		$$
			\forall n \forall m (n < m \text{ implies } v(n) < v(m)),
		$$
		expressing \textquoteleft $v$ codes an infinite subset of $\omega$'.
		Finally, we define $\chi_1(v,w_1)$ by
		$$
			\exists N \forall n \forall m (n > N \text{ implies } v(n) \neq w_1(m)),
		$$
		expressing \textquoteleft$\ran(v) \cap \ran(w_1)$ is finite'
		and set $\chi_n :\equiv \top$ for all $n > 1$.
		Clearly, with respect to Definition~\ref{DEF_type} this exactly captures our definition of a coded mad family.
		Thus, coded mad families are an arithmetical type.
	\end{proof}
	
	Thus, explicitly for mad families Theorem~\ref{THM_MainTheorem} implies that $\SS^{\aleph_0}$-indestructibility for families which code a mad family implies universal Sacks-indestructibility.
	Since the coding is absolute this is equivalent to the respective version without coding:
	
	\begin{corollary}\label{COR_MAD}
		Every $\SS^{\aleph_0}$-indestructible mad family is universally Sacks-indestructible.
	\end{corollary}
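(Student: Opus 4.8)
The plan is to deduce this immediately from Theorem~\ref{THM_MainTheorem} applied to the arithmetical type of coded mad families, using the absoluteness of the coding to pass back and forth between honest mad families and their coded counterparts. So first I would fix a mad family $\A$ of infinite subsets of $\omega$ and pass to its coded version $\F_\A := \set{e_A}{A \in \A}$, where $e_A \in \bairespace$ denotes the strictly increasing enumeration of $A$. By the previous Proposition, $\F_\A$ is a family of the arithmetical type $\t$ of coded mad families.

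The key step is to set up the translation dictionary between intruders and witnesses of non-maximality. Directly from the formulas $\chi_0$ and $\chi_1$ fixed in the proof of the Proposition, a real $g$ is an intruder for $\F_\A$ precisely when $g$ is the increasing enumeration of an infinite set $\ran(g) \subseteq \omega$ with $\ran(g) \cap A$ finite for every $A \in \A$; that is, exactly when $\ran(g)$ witnesses that $\A$ is not maximal. Hence $\F_\A$ has no intruders if and only if $\A$ is maximal. Since all of the defining formulas are arithmetical, this equivalence is absolute between transitive models, so for any forcing $\PP$ — in particular $\PP = \SS^{\aleph_0}$ — $\PP$ forces that $\F_\A$ has no intruder if and only if $\PP$ forces that $\A$ remains mad. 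Consequently, if $\A$ is $\SS^{\aleph_0}$-indestructible as a mad family, then $\F_\A$ is an $\SS^{\aleph_0}$-indestructible family of type $\t$.

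Finally I would invoke Theorem~\ref{THM_MainTheorem} on $\F_\A$ to obtain that $\F_\A$ is universally Sacks-indestructible, and then read this back through the same absolute dictionary: for every countably supported product or iteration $\PP$ of Sacks forcing, $\PP$ forces $\F_\A$ to have no intruder, and hence $\PP$ forces $\A$ to remain mad. The only genuine work — and it is entirely routine — is the verification that the coding dictionary really does identify intruders with new infinite almost-disjoint sets, together with the observation that this identification commutes with forcing extensions by the absoluteness of the formulas $\psi_n, \chi_n$; once that is in place the corollary is immediate.
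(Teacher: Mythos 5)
Your proposal is correct and follows essentially the same route as the paper: verify that coded mad families form an arithmetical type, apply Theorem~\ref{THM_MainTheorem} to the coded family, and use the absoluteness of the (arithmetical) coding to transfer indestructibility back and forth between a mad family and its coded counterpart. The paper states this in two sentences; your write-up merely makes the intruder-versus-non-maximality dictionary explicit.
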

	
	Next, we prove $\eoi(\a)$, i.e.\ elimination of intruders for mad families.
	To be more precise we would have to prove elimination of intruders for families which code mad families, but since our coding is absolute these are easily seen to be equivalent.
	
	\begin{lemma}
		Let $\A$ be a countable a.d.\ family, $p \in \SS^{\aleph_0}$ and $\dot{B}$ be a name for an infinite subset of $\omega$ such that
		$$
			p \forces \dot{B} \notin V \text{ and } \A \cup \simpleset{\dot{B}} \text{ is an a.d.\ family}.
		$$
		Then there is $q \extends p$ and an infinite subset $A$ of $\omega$ such that $\A \cup \simpleset{A}$ is an a.d.\ family and
		$$
			q \forces \A \cup \simpleset{A, \dot{B}} \text{ is not an a.d.\ family}.
		$$
	\end{lemma}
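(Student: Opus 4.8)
The plan is to build, by a single fusion argument, one ground-model infinite set $A$ that is almost disjoint from every member of $\A$ and that the fusion condition $q$ forces to meet $\dot B$ infinitely often. Once we have this, $\A \cup \simpleset{A}$ is an a.d.\ family while $q \forces A \cap \dot B$ is infinite, so $A$ and $\dot B$ are two distinct members of $\A \cup \simpleset{A, \dot B}$ with infinite intersection, i.e.\ $q \forces \A \cup \simpleset{A, \dot B}$ is not a.d., which is exactly the conclusion. Fix an enumeration $\A = \set{A_n}{n < \omega}$.

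I would construct a fusion sequence $\seq{q_k}{k < \omega}$ with $q_0 := p$ and $q_{k+1} \extends_k q_k$, together with strictly increasing bounds $M_k$ and finite sets $E_k$ to be thrown into $A$. At stage $k$ consider, below $q_k$, the set
$$
D_k := \set{r \extends q_k}{\exists c \ (c > M_k \ \wedge \ c \notin A_0 \cup \dots \cup A_k \ \wedge \ r \forces c \in \dot B)}.
$$
This is open since ``$r \forces c \in \dot B$'' is preserved downward, and it is dense below $q_k$: because $q_k \extends p$ and $p \forces \A \cup \simpleset{\dot B}$ is a.d., we have $p \forces \dot B \cap (A_0 \cup \dots \cup A_k)$ is finite while $\dot B$ is infinite, so $q_k \forces \dot B \setminus (A_0 \cup \dots \cup A_k)$ is infinite and every $r \extends q_k$ has an extension deciding such a $c$. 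By Lemma~\ref{LEM_ExtendDenseOpen} choose $q_{k+1} \extends_k q_k$ with $q_{k+1} \restr \sigma \in D_k$ for every $\sigma$ suitable for $q_k$ and $k$; let $c_\sigma$ be the witness for $\sigma$, set $E_k := \set{c_\sigma}{\sigma \text{ suitable for } q_k \text{ and } k}$, and put $M_{k+1} := \max(E_k) + 1$. Finally let $q$ be the fusion of $\seq{q_k}{k < \omega}$ and $A := \bigcup_{k < \omega} E_k$.

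For almost disjointness, fix $n$: for every $k \geq n$ each element of $E_k$ avoids $A_0 \cup \dots \cup A_k \supseteq A_n$, so $A \cap A_n \subseteq E_0 \cup \dots \cup E_{n-1}$ is finite, and $A$ is infinite since every $E_k$ is nonempty with elements above the increasing bounds $M_k$; thus $\A \cup \simpleset{A}$ is a.d.\ (the second clause of almost disjointness is vacuous once $\A \cup \simpleset{A}$ is infinite, and if $\A$ happens to be finite one additionally reserves at each stage a point of the infinite set $\omega \setminus \bigcup \A$ above $M_{k+1}$ to keep out of $A$). For the infinite intersection, given $q' \extends q$ and $N < \omega$ pick $k$ with $M_k > N$; since $\set{q_{k+1} \restr \sigma}{\sigma \text{ suitable}}$ is a maximal antichain below $q_{k+1}$ and $q \extends q_{k+1}$, the condition $q'$ is compatible with some $q_{k+1} \restr \sigma$, and a common extension forces $c_\sigma \in \dot B$ with $c_\sigma \in A$ and $c_\sigma > M_k > N$. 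By density $q \forces A \cap \dot B$ is infinite.

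The main obstacle is exactly the coordination across the maximal antichain at each splitting level: a single ground-model set $A$ must be forced by $q$ to hit the undecided name $\dot B$ infinitely often along every branch, even though no single large element of $\dot B$ is decided below $p$. This is what forces the piece-by-piece choice of witnesses $c_\sigma$ and their simultaneous absorption via Lemma~\ref{LEM_ExtendDenseOpen}, all while respecting the growing initial segment $A_0, \dots, A_k$ of $\A$ so that almost disjointness of $A$ with $\A$ is never spoiled.
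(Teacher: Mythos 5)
Your proof is correct and follows the same basic strategy as the paper's: a fusion through the splitting levels of $\SS^{\aleph_0}$, using Lemma~\ref{LEM_ExtendDenseOpen} at each level to decide, below every $q_{k+1}\restr\sigma$ in the maximal antichain, a witness $c_\sigma$ forced into $\dot B$, and collecting these witnesses into the ground-model set $A$. The organizational difference is that in the infinite case the paper runs \emph{two} fusions: a first one producing bounds $k_n$ with $p_{n+1}\forces A_n\cap\dot B\subseteq k_n$, and a second one choosing witnesses above $\max_{m<n}k_m$, so that being forced into $\dot B$ automatically keeps them out of the earlier $A_m$'s. You fold this into a single fusion by building the ground-model-checkable requirement $c\notin A_0\cup\dots\cup A_k$ directly into the dense set $D_k$, justified by the fact that $\dot B\setminus(A_0\cup\dots\cup A_k)$ is forced to be infinite; this eliminates the bounds $k_n$ entirely and is a genuine (if modest) simplification. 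You also absorb the finite case, which the paper handles separately by splitting $\omega\setminus\bigcup\A$ into two infinite halves and forcing $\dot B$ to concentrate on one of them, via your remark about reserving infinitely many points of $\omega\setminus\bigcup\A$ outside $A$; that remark is stated loosely but is easy to implement. The only step you pass over silently is $q\forces A\neq\dot B$, needed so that $A$ and $\dot B$ really are two distinct members of the extended family; it is immediate from the hypothesis $p\forces\dot B\notin V$ together with $A\in V$, and the paper makes it explicit.
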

	
	\begin{proof}
		If $\A$ is finite we have that $\omega \setminus \bigcup \A$ is infinite, so let $D \cup E$ be a partition of  $\omega \setminus \bigcup A$ into two infinite sets.
		By assumption for every $A \in \A$ we have
		$$
			p \forces \exists k < \omega \ A \cap \dot{B} \subseteq k.
		$$
		Since $\A$ is finite choose $q \extends p$ and $k < \omega$ such that for all $A \in \A$
		$$
			q \forces A \cap \dot{B} \subseteq k.
		$$
		This implies that
		$$
			q \forces \dot{B} \setminus k \subseteq \omega \setminus \bigcup \A = D \cup E.
		$$
		Since $\dot{B}$ is a name for an infinite subset of $\omega$ there is $r \extends q$ such that
		$$
			r \forces D \cap \dot{B} \text{ is infinite} \quad \text{or} \quad r \forces E \cap \dot{B} \text{ is infinite}.
		$$
		W.l.o.g. assume the first case holds.
		Then $\A \cup \simpleset{D}$ is an a.d.\ family and
		$$
			r \forces \A \cup \simpleset{D, \dot{B}} \text{ is not an a.d.\ family}.
		$$
		Now, assume that $\A$ is infinite, so enumerate $\A = \set{A_n}{n < \omega}$.
		We construct a fusion sequence $\seq{p_n}{n < \omega}$ below $p$ and a sequence $\seq{k_n < \omega}{n < \omega}$ as follows.
		Set $p_0 := p$.
		Now, assume $p_n$ is defined.
		By assumption the set
		$$
			D_n := \set{q \extends p_n}{\exists k < \omega \ q \forces ``A_n \cap \dot{B} \subseteq k"}
		$$
		is dense open below $p_n$.
		By Lemma \ref{LEM_ExtendDenseOpen} take $p_{n + 1} \extends_n p_n$ such that $p_{n + 1} \restr \sigma \in D_n$ for all $\sigma$ suitable for $p_n$ and $n$, witnessed by $k_\sigma < \omega$.
		Set $k_n := \max \set{k_\sigma}{\sigma \text{ suitable for } p_n \text{ and } n}$, so that
		$$
			p_{n + 1} \forces A_n \cap \dot{B} \subseteq k_n.
		$$
		Let $q_0$ be the fusion of $\seq{p_n}{n < \omega}$.
		We define a second fusion sequence $\seq{q_n}{n < \omega}$ below $q_0$ and a sequence $\seq{a_n \in \finsubset{\omega}}{n < \omega}$ of disjoint sets.
		Assume $q_n$ is defined and choose $K < \omega$ with $K > a_m, k_m$ for all $m < n$.
		Since $q_n \forces `` \dot{B} \text{ is infinite}"$ the set 
		$$
			E_n := \set{q \extends q_n}{\exists k > K \ q \forces `` k \in \dot{B} "}
		$$
		is dense open below $p_n$.
		Again by Lemma \ref{LEM_ExtendDenseOpen} take $q_{n + 1} \extends_n q_n$ such that $q_{n + 1} \restr \sigma \in E_n$ for all $\sigma$ suitable for $q_n$ and $n$, witnessed by $k_\sigma$.
		Set $a_n := \set{k_\sigma}{\sigma \text{ suitable for } q_n \text{ and } n}$, so that
		$$
			q_{n + 1} \forces \dot{B} \cap a_m \neq \emptyset.
		$$
		By choice of $k_m$ for $m < n$ we also have that $a_n \cap A_m = \emptyset$ for all $m < n$.
		Finally, let $q$ be the fusion of $\seq{q_n}{n < \omega}$ and set $A := \bigcup_{n < \omega} a_n$.
		Then we have that $\A \cup \simpleset{A}$ is an a.d.\ family and
		$$
			q \forces A \cap \dot{B} \text{ is infinite}.
		$$
		Since $q \forces `` \dot{B} \notin V"$ we have $q \forces ``A \neq \dot{B}"$, so we obtain
		\[
			q \forces \A \cup \simpleset{A, \dot{B}} \text{ is not an a.d.\ family}.
		\]
	\end{proof}
	
	Thus, we proved $\eoi(\a)$ and we get the following new result as an instance of Theorem~\ref{THM_SacksUniversal} for mad families:
	
	\begin{corollary}
		Assume $\sf{CH}$.
		Then there is a universally Sacks-indestructible mad family.
	\end{corollary}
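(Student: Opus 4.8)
The plan is to read this corollary off Theorem~\ref{THM_SacksUniversal} applied to the arithmetical type $\t$ of coded mad families. That theorem requires exactly two inputs, both of which are already in hand: the preceding Proposition exhibits explicit sequences $(\psi_n)_{n<\omega}$ and $(\chi_n)_{n<\omega}$ witnessing that coded mad families form an arithmetical type, and the preceding Lemma establishes the elimination-of-intruders property $\eoi(\t)$ for this type. Granting these, Theorem~\ref{THM_SacksUniversal} directly produces, under $\sf{CH}$, a universally Sacks-indestructible family $\F$ of reals of type $\t$, i.e.\ a family coding a mad family.

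The one point that genuinely needs checking is that the preceding Lemma is a faithful instance of $\eoi(\t)$. Unwinding Definition~\ref{DEF_type} for the $\chi_n$ chosen in the Proposition (namely $\chi_0$ asserting that $v$ codes an infinite set, $\chi_1$ asserting $\ran(v)\cap\ran(w_1)$ finite, and $\chi_n\equiv\top$ for $n>1$), a real $g$ is an intruder for a coded a.d.\ family $\A$ precisely when $g$ codes an infinite subset of $\omega$ almost disjoint from every member of $\A$, i.e.\ a witness to non-maximality. Thus a name $\dot{g}$ with $p\forces\dot{g}$ an intruder for $\A$ amounts to a name $\dot{B}$ for an infinite subset of $\omega$ which $p$ forces to be almost disjoint from every member of $\A$, and, invoking the remark after Definition~\ref{DEF_EliminatingIntrudersLemma}, we may assume $p\forces\dot{B}\notin V$. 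These are exactly the hypotheses under which the Lemma operates, and its conclusion, a real $A$ with $\A\cup\simpleset{A}$ again a.d.\ and $q\forces A\cap\dot{B}$ infinite, says precisely that $\F\cup\simpleset{A}$ is of type $\t$ and $q\forces\dot{g}$ is not an intruder for $\F\cup\simpleset{A}$, which is the conclusion of $\eoi(\t)$.

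Finally I would pass from the coded family back to an honest mad family: since the formulas $\psi_n$ defining $\dom(\t)$ are arithmetical, coding an a.d.\ family is an absolute notion, so $\F$ codes an a.d.\ family $\A$ in the ground model and in every Sacks extension; and since $\F$ has no intruders in any countably supported Sacks product or iteration, $\A$ admits no witness to non-maximality there, hence remains mad. The family $\A$ of subsets of $\omega$ coded by $\F$ is then the desired universally Sacks-indestructible mad family. I expect no serious obstacle, since all the combinatorial substance sits in the fusion argument of the Lemma; the only delicate bookkeeping is the finite-family case of $\eoi(\t)$, where one should note that the Lemma's argument never uses the full almost disjointness of $\A\cup\simpleset{\dot{B}}$ but only that $\dot{B}$ is infinite and almost disjoint from each member of the a.d.\ family $\A$, which is exactly what being an intruder supplies.
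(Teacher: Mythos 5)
Your proposal is correct and follows exactly the paper's route: the corollary is obtained as an instance of Theorem~\ref{THM_SacksUniversal} applied to the arithmetical type of coded mad families, using the preceding Proposition for the type and the preceding Lemma for $\eoi(\t)$, with the passage between coded and honest mad families handled by absoluteness of the coding. Your additional observation that the Lemma only ever uses the intruder properties of $\dot{B}$ (not the full a.d.\ condition on $\A\cup\simpleset{\dot{B}}$) is a correct and worthwhile clarification of a point the paper leaves implicit.
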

	
	\subsection{Eventually different families}\label{SECSUB_MED}
	Next, we will briefly consider maximal eventually different families.
	This case is especially noteworthy as the idea for the Main Theorem~\ref{THM_MainTheorem} originates from Fischer's and Schrittesser's construction of a universally Sacks-indestructible eventually different family in \cite{FischerSchrittesser}.
	The main definition is the following:
	
	\begin{definition}
		A family of reals $\F \subseteq \bairespace$ is eventually different (e.d.) iff for all $f \neq g \in \F$ there is $N < \omega$ such that $f(n) \neq g(n)$ for all $n > N$.
		$\F$ is called maximal (med) iff it is maximal w.r.t. to inclusion.
		The corresponding cardinal characteristic is $\aE$:
		\[
			\aE := \min{\set{\left|\F\right|}{\F \text{ is a med family}}}.
		\]
	\end{definition}

	\begin{proposition}
		Med families are an arithmetical type.
	\end{proposition}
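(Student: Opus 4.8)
The plan is to exhibit explicit arithmetical sequences $(\psi_n)_{n < \omega}$ and $(\chi_n)_{n < \omega}$ matching Definition~\ref{DEF_type}, following the mad-family template but exploiting that eventual difference is a purely binary relation. Since members of an e.d.\ family already live in $\bairespace$ and require no coding, I would impose no constraint for $n = 0$ by setting $\psi_0(w_0) :\equiv \top$, and let
\[
    \psi_1(w_0, w_1) :\equiv \exists N \forall n \, (n > N \text{ implies } w_0(n) \neq w_1(n)),
\]
expressing that $w_0$ and $w_1$ are eventually different. As eventual difference is a condition on pairs only, I would set $\psi_n :\equiv \top$ for every $n > 1$. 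For the intruder side I would mirror this choice exactly, taking $\chi_0(v) :\equiv \top$, letting $\chi_1(v, w_1)$ be the same eventual-difference formula with $v$ replacing $w_0$, and setting $\chi_n :\equiv \top$ for all $n > 1$.

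Next I would confirm that these formulas are arithmetical: both $\psi_1$ and $\chi_1$ use only the integer quantifiers $\exists N$ and $\forall n$ applied to the quantifier-free matrix $w_0(n) \neq w_1(n)$, so they sit low in the arithmetical hierarchy, while $\top$ is trivially arithmetical. It then remains to unwind Definition~\ref{DEF_type} and check that the chosen formulas capture the intended notions. Membership $\F \in \dom(\t)$ reduces to requiring $\psi_1(f_0, f_1)$ for every pair $\{f_0, f_1\} \in [\F]^2$ --- the cases $n = 0$ and $n > 1$ being vacuous --- which is precisely the assertion that $\F$ is e.d. Likewise, a real $g$ is an intruder exactly when $\chi_1(g, f)$ holds for every $f \in \F$, i.e.\ when $g$ is eventually different from each member of $\F$.

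The only point needing care, and the closest thing to an obstacle, is to see that the non-existence of intruders is equivalent to maximality. Here I would use that no function is eventually different from itself, so every intruder $g$ automatically satisfies $g \notin \F$; thus an intruder is exactly a real outside $\F$ whose adjunction keeps the family e.d., i.e.\ a witness to non-maximality. Hence $\F$ has no intruders if and only if $\F$ is med, and by Definition~\ref{DEF_type} med families form an arithmetical type. In contrast to the mad case, eventual difference carries no finiteness or union requirement, so all formulas of arity greater than one collapse to $\top$ and the verification is essentially immediate.
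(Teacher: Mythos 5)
Your proposal is correct and coincides with the paper's own proof: the paper likewise sets $\psi_n :\equiv \top$ for $n \neq 1$, takes $\psi_1$ to be the eventual-difference formula $\exists N \forall n\,(n > N \to w_0(n) \neq w_1(n))$, and mirrors this with $\chi_1$ for intruders. Your additional remark that intruders are exactly witnesses to non-maximality is a correct (if strictly optional) sanity check on the same construction.
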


	\begin{proof}
		We set $\psi_n :\equiv \top$ for $n \neq 1$ and define the formula $\psi_1(w_0, w_1)$ to be
		$$
			\exists N \forall n (n > N \text{ implies } w_0(n) \neq w_1(n)),
		$$
		expressing \textquoteleft $w_0$ and $w_1$ are eventually different'.
		Analogously, we set $\chi_n \equiv \top$ for $n \neq 1$ and define the formula $\chi_1(v, w_1)$ to express \textquoteleft $v$ and $w_1$ are eventually different'.
		Thus, med families is an arithmetical type.
	\end{proof}

	Hence, Theorem~\ref{THM_MainTheorem} implies for this instance:
	
	\begin{corollary}
		Every $\SS^{\aleph_0}$-indestructible med family is universally Sacks-indestructible.
	\end{corollary}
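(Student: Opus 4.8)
The plan is to obtain this corollary as an immediate instance of Theorem~\ref{THM_MainTheorem}, exactly as in the mad family case. The preceding Proposition already verifies that med families form an arithmetical type $\aE$, realized by the formulas $\psi_1(w_0,w_1)$ and $\chi_1(v,w_1)$ both asserting eventual difference, together with $\psi_n \equiv \chi_n \equiv \top$ for $n \neq 1$. Thus the only genuine work is to match the combinatorial definition of a maximal eventually different family against the abstract notions of Definition~\ref{DEF_type}, and then to feed the resulting hypothesis into the Main Theorem.

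First I would unwind the definition of the type $\aE$. A family $\F \subseteq \bairespace$ lies in $\dom(\aE)$ precisely when every pair $\{f_0,f_1\} \in [\F]^2$ satisfies $\psi_1(f_0,f_1)$, i.e.\ when $\F$ is pairwise eventually different; this is exactly the condition of being an e.d.\ family. Next, for such an $\F$ a real $g$ is an intruder iff $\chi_1(g,f)$ holds for every $f \in \F$, i.e.\ iff $g$ is eventually different from each member of $\F$. Hence $g$ is an intruder for $\F$ if and only if $g$ witnesses that $\F$ is not maximal, and so $\F$ is med if and only if $\F$ is of type $\aE$ and has no intruder. Note that, in contrast with the mad case, no coding step is needed here: the members of a med family are themselves elements of $\bairespace$, so the type $\aE$ applies to $\F$ directly.

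With this dictionary in place, an $\SS^{\aleph_0}$-indestructible med family is, by definition, a family $\F$ of type $\aE$ such that $\SS^{\aleph_0}$ forces that $\F$ has no intruder. This is precisely the hypothesis of Theorem~\ref{THM_MainTheorem}, which therefore yields at once that $\F$ is universally Sacks-indestructible. I do not expect any real obstacle: all of the forcing-theoretic content (continuous reading of names, the passage from arithmetical forcing statements to $\Pi^1_3$ formulas in Lemma~\ref{LEM_ForcingToAbsolute}, and the reflection argument) has already been absorbed into the proof of the Main Theorem, so the entire difficulty of the corollary reduces to the definitional bookkeeping of the previous paragraph.
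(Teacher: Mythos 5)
Your proposal is correct and follows the paper's route exactly: the preceding proposition establishes that med families form an arithmetical type via $\psi_1$ and $\chi_1$ asserting eventual difference (with all other formulas trivial), and the corollary is then an immediate instance of Theorem~\ref{THM_MainTheorem}. Your added observation that no coding step is needed here, unlike the mad family case, is also accurate.
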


	Note that Fischer and Schrittesser essentially proved $\eoi(\aE)$ in \cite{FischerSchrittesser}.
	More precisely, combining $\eoi(\aE)$ with Lemma~\ref{LEM_ForcingToAbsolute} exactly yields their Lemma~7, which they used to construct their universally Sacks-indestructible med family.
	For completeness we present a direct proof of $\eoi(\aE)$, which will also serve as a rough template for the corresponding lemma for maximal cofinitary groups in one of the following sections.
	
	\begin{lemma}\label{LEM_EILforED}
		Let $\F$ be a countable e.d.\ family, $p \in \SS^{\aleph_0}$ and $\dot{g}$ be a name for a real such that
		$$
			p \forces \dot{g} \notin V \text{ and } \F \cup \simpleset{g} \text{ is an e.d.\ family}.
		$$
		Then there is $q \extends p$ and a real $\text{f}$ such that $\F \cup \simpleset{f}$ is an e.d.\ family and
		$$
			q \forces \F \cup \simpleset{f, \dot{g}} \text{ is not an e.d.\ family}.
		$$
	\end{lemma}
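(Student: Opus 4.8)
The plan is to produce the witness $f$ as a ground model function which is eventually different from every member of $\F$, while building a fusion $q \extends p$ that forces $f$ and $\dot g$ to agree at infinitely many arguments. This suffices: since $\F \cup \simpleset{f}$ will be e.d.\ and $q \extends p$ forces $\F \cup \simpleset{\dot g}$ to be e.d., the only possible reason for $\F \cup \simpleset{f, \dot g}$ to fail to be e.d.\ is a failure of eventual difference between $f$ and $\dot g$, so infinitely many forced agreements are exactly what is needed. Moreover $q \forces \dot g \notin V$ gives $q \forces f \neq \dot g$, so $f$ and $\dot g$ are genuinely distinct members of the extended family.

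Enumerate $\F = \set{h_m}{m < \omega}$ and build a fusion sequence $\seq{q_n}{n < \omega}$ below $p$ with $q_0 := p$, defining $f$ on more and more arguments along the way. At stage $n$, consider the set $D_n$ of those $r \extends q_n$ which decide $\dot g(m) = v$ for some argument $m$ larger than every argument used so far and some value $v$ with $v \neq h_j(m)$ for all $j \le n$. This $D_n$ is open, and it is dense below $q_n$: any $s \extends q_n$ forces $\dot g$ to be eventually different from each of $h_0, \dots, h_n$, so $s$ may be thinned to a condition forcing a common eventual-difference threshold $N_0$ for all of them, and then extended to decide $\dot g(m) = v$ at any fresh $m > N_0$, giving a witness in $D_n$. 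Applying Lemma~\ref{LEM_ExtendDenseOpen} — but processing the suitable functions $\sigma$ for $q_n$ and $n$ one at a time, as in its proof, each sub-step requiring the newly decided argument to exceed all arguments decided before — yields $q_{n+1} \extends_n q_n$ together with, for each suitable $\sigma$, a pairwise distinct argument $m_\sigma$ and a value $v_\sigma$ such that $q_{n+1} \restr \sigma \forces \dot g(m_\sigma) = v_\sigma$ and $v_\sigma \neq h_j(m_\sigma)$ for all $j \le n$. I then set $f(m_\sigma) := v_\sigma$.

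Once the fusion is complete I fill in $f$ at every remaining argument $k$ by any value $f(k) \notin \simpleset{h_0(k), \dots, h_k(k)}$, and let $q$ be the fusion of $\seq{q_n}{n < \omega}$. To see $\F \cup \simpleset{f}$ is e.d., fix $j$: at every remaining argument $k \ge j$ we have $f(k) \neq h_j(k)$ by the fill-in clause, and at every agreement argument $m_\sigma$ chosen at a stage $n \ge j$ we have $f(m_\sigma) = v_\sigma \neq h_j(m_\sigma)$, so $f$ and $h_j$ agree at only finitely many arguments. For the forcing statement, at each stage $n$ the set $\set{q_{n+1} \restr \sigma}{\sigma \text{ suitable for } q_n \text{ and } n}$ is a maximal antichain below $q_{n+1}$ and $q \extends q_{n+1}$; hence $q$ forces the generic to lie in $[q_{n+1} \restr \sigma]$ for exactly one such $\sigma$, and for that $\sigma$ we get $\dot g(m_\sigma) = v_\sigma = f(m_\sigma)$. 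Since the arguments $m_\sigma$ grow without bound in $n$, this produces infinitely many forced agreements, so $q \forces \F \cup \simpleset{f, \dot g}$ is not an e.d.\ family.

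The main obstacle is precisely the coordination across the finitely many pieces $q_n \restr \sigma$ at each fusion step: distinct pieces may force distinct values of $\dot g$ at a common argument, so there is no single agreement argument that one can pin down below all of $q_{n+1}$ at once. Assigning each suitable $\sigma$ its own fresh argument $m_\sigma$ is the device that resolves this — it keeps $f$ well defined, guarantees one agreement along every branch at every stage (hence infinitely many along the generic), and, because these arguments are chosen large and the forced values avoid $h_0, \dots, h_n$, simultaneously preserves eventual difference of $f$ from each member of $\F$.
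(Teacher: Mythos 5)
Your proof is correct and follows essentially the same strategy as the paper's: a fusion below $p$ in which, at stage $n$, each suitable function $\sigma$ receives its own fresh argument where $\dot g$ is decided to a value avoiding the first $n$ members of $\F$, with $f$ filled in elsewhere to preserve eventual difference and the maximal-antichain property yielding infinitely many forced agreements. The only difference is organizational: the paper first runs a separate fusion to extract uniform eventual-difference thresholds $k_n$ and then uses intervals of decided values in a second fusion (so that Lemma~\ref{LEM_ExtendDenseOpen} applies verbatim to a single dense open set), whereas you fold the threshold-finding into the density argument and process the suitable functions sequentially, which is equally valid.
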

	
	\begin{proof}
		Enumerate $\F = \set{f_n}{n < \omega}$.
		We construct a fusion sequence $\seq{p_n}{n < \omega}$ below $p$ and a sequence $\seq{k_n < \omega}{n < \omega}$ as follows.
		Set $p_0 := p$.
		Now, assume $p_n$ is defined.
		By assumption the set
		$$
			D_n := \set{q \extends p_n}{\exists k < \omega \ q \forces ``\forall l > k \ f_n(l) \neq \dot{g}(l)"}
		$$
		is dense open below $p_n$.
		By Lemma \ref{LEM_ExtendDenseOpen} take $p_{n + 1} \extends_n p_n$ such that $p_{n + 1} \restr \sigma \in D_n$ for all $\sigma$ suitable for $p_n$ and $n$, witnessed by $k_\sigma < \omega$.
		Set $k_n := \max \set{k_\sigma}{\sigma \text{ suitable for } p_n \text{ and } n}$, so that
		$$
			p_{n + 1} \forces \forall l > k_n \ f_n(l) \neq \dot{g}(l).
		$$
		Let $q_0$ be the fusion of $\seq{p_n}{n < \omega}$.
		We define a second fusion sequence $\seq{q_n}{n < \omega}$ below $q_0$ and a sequence of partial functions $\seq{h_n}{n < \omega}$ with the following properties:
		\begin{enumerate}
			\item The sequence $\seq{\dom(h_n)}{n < \omega}$ is an increasing interval partition of $\omega$,
			\item We have $k_n \leq \max (\dom(h_n))$,
			\item For all $m < n$ and $l \in \dom(h_n)$ we have $f_m(l) \neq h_n(l)$,
			\item $q_{n + 1} \forces \exists l \in \dom(h_n) \ h_n(l) = \dot{g}(l)$.
		\end{enumerate}
		This finishes the proof, for if $q$ is the fusion of $\seq{q_n}{n < \omega}$, then $f := \bigcup_{n < \omega} h_n$ is total function by $(1)$.
		Furthermore, $(3)$ implies that $\F \cup \simpleset{f}$ is an e.d.\ family and $(4)$ implies that
		$$
			q \forces f \text{ and } \dot{g} \text{ are not eventually different}.
		$$
		Since $q \forces ``\dot{g} \notin V"$ we have $q \forces ``f \neq \dot{g}$, so we obtain
		$$
			q \forces \F \cup \simpleset{f, \dot{g}} \text{ is not an e.d.\ family}.
		$$
		For the fusion construction assume $q_n$ is defined and enumerate with $\seq{\sigma_i}{i < N}$ the set of all suitable functions $\sigma$ for $q_n$ and $n$.
		Let $I$ be the interval above $\bigcup_{m < n} \dom(h_m)$ of size $\max(k_n, N)$.
		We construct $h_n$ with $\dom(h_n) = I$, so that $(1)$ to $(4)$ hold.
		The choice of $I$ already implies that $(1)$ and $(2)$ are satisfied.
		Note, that the set
		$$
			E_n := \set{q \extends p_n}{q \text{ decides } \dot{g} \text{ on } I "}
		$$
		is dense open below $p_n$.
		Again by Lemma \ref{LEM_ExtendDenseOpen} take $q_{n + 1} \extends_n q_n$ such that $q_{n + 1} \restr \sigma \in E_n$ for all $\sigma$ suitable for $q_n$ and $n$, witnessed by the decision $g_\sigma:I \to \omega$.
		We define $h_n$ for $i < N$ by
		$$
			h_n(\min(I) + i) := g_{\sigma_i}(\min(I) + i).
		$$
		For all other $l \in I$ with $l \geq \min(I) + N$ we define $h_n(l)$ arbitrarily so that $(3)$ is satisfied.
		By construction of $q_{n + 1}$ and $h_n$ we have for all $i < N$
		$$
			q_{n + 1} \restr \sigma_i \forces h_n(\min(I) + i) = g_{\sigma_i}(\min(I) + i) = \dot{g}(\min(I) + i),
		$$
		i.e.\ $(4)$ is satisfied.
		For $(3)$ let $m < n$ and $l \in I$.
		If $l \geq \min(I) + N$ there is nothing to show, so let $i < N$.
		By induction assumption for $m < n$ we have $k_m < \min(I) + i$, so that by choice of $k_m$
		$$
			q_{n + 1} \forces f_m(\min(I) + i) \neq \dot{g}(\min(I) + i).
		$$
		On the other hand we have
		$$
			q_{n + 1} \restr \sigma_i \forces \dot{g}(\min(I) + i) = h_n(\min(I) + i),
		$$
		which implies $f_m(\min(I) + i) \neq h_n(\min(I) + i)$.
	\end{proof}

	Thus, we proved $\eoi(\aE)$ and as before, Theorem~\ref{THM_SacksUniversal} yields the following theorem, which corresponds to Theorem~9 in \cite{FischerSchrittesser}:
	
	\begin{corollary}
		Assume $\sf{CH}$.
		Then there is a universally Sacks-indestructible med family.
	\end{corollary}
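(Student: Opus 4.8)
The plan is to obtain this corollary as a direct instance of Theorem~\ref{THM_SacksUniversal} applied to the arithmetical type $\aE$ of med families. By the Proposition preceding this corollary, med families genuinely form an arithmetical type, so all that remains is to verify the hypothesis $\eoi(\aE)$ of Theorem~\ref{THM_SacksUniversal}; since a family of type $\aE$ is by definition exactly a med family, the existence of a universally Sacks-indestructible med family under $\sf{CH}$ then follows at once.

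To see that $\eoi(\aE)$ holds I would first unwind the definition of an intruder for this type against Definition~\ref{DEF_type}. For $\aE$ we have $\chi_1(v,w_1)$ expressing that $v$ and $w_1$ are eventually different, and $\chi_n \equiv \top$ for $n \neq 1$, so a real $g$ is an intruder for an e.d.\ family $\F$ precisely when $g$ is eventually different from every member of $\F$, i.e.\ when $\F \cup \simpleset{g}$ is again an e.d.\ family. Under this translation the hypothesis ``$p \forces \dot g$ is an intruder for $\F$'' of the $\eoi$ definition becomes ``$p \forces \F \cup \simpleset{\dot g}$ is an e.d.\ family'', and the desired conclusion ``$q \forces \dot g$ is not an intruder for $\F \cup \simpleset{f}$'' becomes ``$q \forces \F \cup \simpleset{f, \dot g}$ is not an e.d.\ family''. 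These are exactly the hypothesis and conclusion of Lemma~\ref{LEM_EILforED}, so that lemma supplies $\eoi(\aE)$ once one loose end is tied off.

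That loose end is the extra assumption $p \forces \dot g \notin V$ carried by Lemma~\ref{LEM_EILforED} but absent from the definition of $\eoi$. Following the remark after Definition~\ref{DEF_EliminatingIntrudersLemma}, I would dispose of the in-ground-model case first: if $p \notforces \dot g \notin V$, then some extension of $p$ forces $\dot g \in V$, and refining further yields $q \extends p$ together with a ground-model real $f$ with $q \forces \dot g = f$. Since $p$ forces $\dot g$ to be eventually different from each member of $\F$ and eventual difference is an arithmetical, hence absolute, property of the ground-model reals $f$ and the members of $\F$, the family $\F \cup \simpleset{f}$ is genuinely e.d.; and as $f$ is not eventually different from itself we obtain $q \forces \F \cup \simpleset{f, \dot g}$ is not e.d., so the conclusion holds trivially. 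In the complementary case $p \forces \dot g \notin V$ we invoke Lemma~\ref{LEM_EILforED} directly. Either way $\eoi(\aE)$ is established, and composing the resulting $\SS^{\aleph_0}$-indestructible construction with Theorem~\ref{THM_MainTheorem}, as already packaged in Theorem~\ref{THM_SacksUniversal}, gives the claim. I do not expect a genuine obstacle here: the entire combinatorial content has already been isolated in Lemma~\ref{LEM_EILforED}, so the only real task is the routine matching of that lemma to the abstract framework and the case split just described.
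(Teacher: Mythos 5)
Your proposal is correct and follows essentially the same route as the paper: the paper also derives this corollary by observing that Lemma~\ref{LEM_EILforED} establishes $\eoi(\aE)$ (with the ground-model case of $\dot g$ dispatched exactly as in the remark following Definition~\ref{DEF_EliminatingIntrudersLemma}) and then invoking Theorem~\ref{THM_SacksUniversal}. Your unwinding of the intruder notion for the type $\aE$ and the case split on $p \forces \dot g \notin V$ are precisely the bookkeeping the paper leaves implicit.
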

	
	\subsection{Partitions of Baire space into compact sets}\label{SECSUB_ADFS}
	
	In this section we want to consider partitions of Baire space into compact sets.
	Recently, the authors constructed a universally Sacks-indestructible such partition in \cite{FischerSchembecker} with the same techniques, so that we only summarize how our results generalize that construction.
	The key observation is that partitions of Baire space into compact sets are in one-to-one correspondence with the following types of families:
	
	\begin{definition}
		A family $\mathcal{T}$ of finitely splitting trees on $\omega$ is called an almost disjoint family of finitely splitting trees (or an a.d.f.s.\ family) iff $S$ and $T$ are almost disjoint, i.e.\ $S \cap T$ is finite for all $S \neq T \in \T$.
		It is called maximal iff for all $f \in \bairespace$ there is $T \in \T$ with $f \in [T]$.
		Here, $[T]$ denotes the set of branches trough $T$.
		The corresponding cardinal characteristic is $\aT$:
		\[
			\aT := \min{\set{\left|\T\right|}{\F \text{ is a maximal a.d.f.s.\ family}}}.
		\]
	\end{definition}
	
	\begin{remark}
		There is a one-to-one correspondence between non-empty compact subsets of $\bairespace$ and finitely splitting trees on $\omega$ given by the following maps:
		
		Given a finitely splitting tree $T$ on $\omega$ its set of branches $[T]$ is a non-empty compact subset~of~$\bairespace$.
		Conversely, given a non-empty compact subset $C$ of $\bairespace$ we define a finitely splitting tree by
		$$
			T_C := \set{s \in \bairespace}{\exists f \in C \ s \subseteq f}.
		$$
		It is easy to check that these maps are inverse to each other.
	\end{remark}
	
	Notice that by König's lemma for finitely splitting trees $S$ and $T$ we have that $S$ and $T$ are almost disjoint iff $[T] \cap [S] = \emptyset$.
	Thus, using the above identification of finitely splitting trees and non-empty compact subsets of $\bairespace$, we can also identify maximal a.d.f.s.\ families with partitions of $\bairespace$ into non-empty compact sets.
	
	\begin{proposition}
		Maximal a.d.f.s.\ families are an arithmetical type.
	\end{proposition}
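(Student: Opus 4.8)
The plan is to mirror the proof of the corresponding proposition for mad families: fix a real-coding of finitely splitting trees and exhibit the defining sequences $(\psi_n)_{n < \omega}$ and $(\chi_n)_{n < \omega}$ as arithmetical formulas, then verify against Definition~\ref{DEF_type}. Concretely, I would fix a recursive bijection between $\finbairespace$ and $\omega$ and interpret a real $w \in \bairespace$ as the characteristic function of a subset $T_w \subseteq \finbairespace$. Under this coding the basic tree operations --- ``$s \subsequence t$'', ``$s \in T_w$'', ``$s \concat \langle k \rangle$ is an immediate successor of $s$'' --- are all recursive in $w$ and the integer codes of the relevant nodes, hence arithmetical. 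The whole argument then reduces to writing down the combinatorial conditions and counting quantifiers.

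For the domain formulas I would let $\psi_0(w_0)$ express ``$w_0$ codes a non-empty, pruned, finitely splitting tree''. Being a tree (closed under $\subsequence$) is $\Pi^0_1$; being non-empty and pruned (the empty node lies in $T_{w_0}$ and every node has an immediate successor in $T_{w_0}$) is $\Pi^0_2$; and being finitely splitting --- for every $s \in T_{w_0}$ there is a bound $N$ on the immediate successors of $s$ --- has the shape $\forall s\,\exists N\,\forall k$ and is thus $\Pi^0_3$. Their conjunction is arithmetical. I would let $\psi_1(w_0, w_1)$ express that $T_{w_0}$ and $T_{w_1}$ are almost disjoint. Here lies the one genuinely important point: I must \emph{not} phrase this as $[T_{w_0}] \cap [T_{w_1}] = \emptyset$, since that carries a real quantifier and is only $\Pi^1_1$; instead I use the literal definition ``$T_{w_0} \cap T_{w_1}$ is finite'', i.e.\ $\exists N\,\forall m\,(m \in T_{w_0} \cap T_{w_1} \to m \leq N)$, which is $\Sigma^0_2$. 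By König's lemma for finitely splitting trees --- already recorded before the proposition --- these two formulations agree, so nothing combinatorial is lost. Since almost disjointness is purely pairwise, I set $\psi_n :\equiv \top$ for all $n > 1$; note that the higher formulas carried the non-covering condition in the a.d.\ case, which has no analogue here.

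For the intruder formulas, recall that an intruder for a maximal a.d.f.s.\ family witnesses non-maximality, i.e.\ it is a branch $g \in \bairespace$ with $g \notin [T]$ for every $T \in \T$. Accordingly I would set $\chi_0 :\equiv \top$ and $\chi_n :\equiv \top$ for $n \geq 2$, and define $\chi_1(v, w_1)$ to express ``$v \notin [T_{w_1}]$'', i.e.\ $\exists n\,(v \restr n \notin T_{w_1})$, which is $\Sigma^0_1$. Unwinding Definition~\ref{DEF_type}, a real $g$ is then an intruder for $\T$ exactly when $g \notin [T]$ for every $T \in \T$; hence $\T$ has no intruders precisely when every real is a branch of some member of $\T$, which is exactly maximality. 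Together with the immediate check that $\dom(\t)$ is the collection of coded a.d.f.s.\ families, this establishes that maximal a.d.f.s.\ families form an arithmetical type.

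The only delicate step is the treatment of almost disjointness: the naive branch-disjointness formulation is not arithmetical, and it is König's lemma that licenses the arithmetical finite-intersection reformulation. Everything else is a routine quantifier count once the fixed recursive coding of $\finbairespace$ by $\omega$ is in place, so in the write-up I would state the coding, exhibit the six formulas, and remark that each is arithmetical, omitting the mechanical verifications in the spirit of the preceding subsections.
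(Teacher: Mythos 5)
Your proposal is correct and follows essentially the same route as the paper: the same coding of $\finbairespace$ by $\omega$ with reals as characteristic functions, the same six formulas ($\psi_0$ for ``finitely splitting tree'', $\psi_1$ for almost disjointness as finite intersection of the trees, $\chi_1$ for ``$v \notin [T_{w_1}]$'', all others $\top$), and the same appeal to K\"onig's lemma to replace the $\Pi^1_1$ branch-disjointness formulation by the arithmetical finite-intersection one. The only cosmetic differences are your explicit quantifier-complexity bookkeeping and the use of a bijection rather than an injection for the node coding, neither of which changes the argument.
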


	\begin{proof}
		As for mad families, finitely splitting trees do not live in $\bairespace$, so we have to use an arithmetically definable coding of sequences of natural numbers (for example using prime decomposition).
		This means that there is an injection $\textquoteleft\code\text{'}:\finbairespace \to \omega$ such that the statements \textquoteleft$n$ is the code for some $s_n \in \finbairespace$', \textquoteleft $s_n \subsequence s_m$' and \textquoteleft $\code(v \restr n) = m$` are definable by arithmetical formulas $\varphi_0(n), \varphi_1(n,m)$ and $\varphi_2(v, n, m)$.
		Then, we define $\psi_0(w_0)$ by
		\begin{align*}
			&\forall n (w_0(n) = 1 \text{ implies } \varphi_0(n))\\
			\text{ and } &\forall n,m ((w_0(n) = 1 \text{ and } s_m \subsequence s_n) \text{ implies } w_0(m) = 1)\\
			\text{ and } &\forall n (w_0(n) = 1 \text{ implies } \exists m (m \neq n, s_n \subsequence s_m \text{ and } w_0(m) = 1))\\
			\text{ and } &\forall n (w_0(n) = 1 \text{ implies } \exists M \forall m ((m > M, s_n \subsequence s_m \text{ and } w_0(m) = 1)\\
			&\qquad \text{ implies } \exists k (k \neq n, k \neq m \text{ and } s_n \subsequence s_k \subsequence s_m))),
		\end{align*}
		expressing \textquoteleft $\code^{-1}[w_0^{-1}[\simpleset{1}]]$ is a finitely splitting tree'.
		Further, we define $\psi_1(w_0, w_1)$ by
		$$
			\exists N \forall N (n > N \text{ implies } (w_0(n) \neq 1 \text{ or } w_1(n) \neq 1)),
		$$
		expressing \textquoteleft$\code^{-1}[w_0^{-1}[\simpleset{1}]]$ and $\code^{-1}[w_1^{-1}[\simpleset{1}]]$ are almost disjoint'.
		Set $\psi_n :\equiv \top$ for all $n > 1$.
		Analogously, set $\chi_n :\equiv \top$ for all $n \neq 1$.
		Finally, we set $\chi_1(v, w_1)$ as
		$$
			\exists n \forall m (\varphi_2(v, n, m) \text{ implies } w_1(m) = 0),
		$$
		expressing \textquoteleft$v \notin [\code^{-1}[w_0^{-1}[\simpleset{1}]]]$'.
		Thus, maximal a.d.f.s.\ families are an arithmetical type.
	\end{proof}
	
	As usual, Theorem~\ref{THM_MainTheorem} yields the following new result:
	
	\begin{corollary}
		Every $\SS^{\aleph_0}$-indestructible a.d.f.s.\ family (partition of Baire space into compact sets) is universally Sacks-indestructible.
	\end{corollary}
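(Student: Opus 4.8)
The plan is to obtain this corollary as an immediate instance of Theorem~\ref{THM_MainTheorem}, exactly as was done for mad and med families. By the preceding proposition the (coded) maximal a.d.f.s.\ families form an arithmetical type $\t$, where $\psi_0, \psi_1$ pin down which reals code genuine a.d.f.s.\ families and $\chi_1(v, w_1)$ asserts that the branch coded by $v$ escapes the finitely splitting tree coded by $w_1$. Unwinding Definition~\ref{DEF_type} with $\chi_n \equiv \top$ for $n \neq 1$, a real $g$ is an intruder for a coded family $\F$ precisely when the branch it codes lies in $[T]$ for no tree $T$ represented in $\F$ --- that is, when it witnesses that the associated partition of $\bairespace$ fails to cover every real. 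Thus ``having an intruder'' coincides, in every model, with ``failing to be a maximal partition''.

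Granting this, the first and only substantive step is to apply Theorem~\ref{THM_MainTheorem} to the type $\t$: any $\SS^{\aleph_0}$-indestructible family of type $\t$ is universally Sacks-indestructible. This yields the statement verbatim for coded a.d.f.s.\ families.

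It then remains to transfer the conclusion between the coded formulation and the stated formulation in terms of actual finitely splitting trees (equivalently, compact sets). Here I would rely on absoluteness of the coding: since $\code$ is an arithmetically definable injection $\finbairespace \to \omega$ and the auxiliary predicates $\varphi_0, \varphi_1, \varphi_2$ are arithmetical, the properties ``$w$ codes a finitely splitting tree'' and ``$v$ is a branch through the tree coded by $w$'' are absolute between the ground model and any Sacks extension. Consequently a family $\T$ of finitely splitting trees is a maximal a.d.f.s.\ family exactly when its set of codes is a maximal coded a.d.f.s.\ family, and an intruder for one is an intruder for the other; hence $\SS^{\aleph_0}$-indestructibility (and likewise universal Sacks-indestructibility) of $\T$ is equivalent to that of its code set, and the corollary follows.

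I expect no genuine obstacle beyond this bookkeeping, as all the forcing content is already carried by Theorem~\ref{THM_MainTheorem}. The one point deserving care is the verification, via König's lemma (as in the remark preceding the proposition), that almost disjointness of finitely splitting trees is equivalent to disjointness of their branch sets, so that the combinatorial notion of an intruder --- a real in no $[T]$, witnessing non-maximality of the partition --- really is the one captured by $\chi_1$, and that this notion is correctly preserved under the coding.
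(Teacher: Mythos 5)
Your proposal is correct and matches the paper's route exactly: the paper likewise obtains this corollary as an immediate instance of Theorem~\ref{THM_MainTheorem} applied to the arithmetical type of (coded) a.d.f.s.\ families, with the transfer between coded and uncoded formulations justified by absoluteness of the arithmetical coding, just as in the mad family case. The additional remarks on König's lemma and on intruders witnessing non-maximality of the partition are consistent with the paper's surrounding discussion and introduce no gap.
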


	In order to construct a universally Sacks-indestructible partition of Baire space into compact sets, the authors proved $\eoi(\aG)$ in Lemma~4.8~in~\cite{FischerSchembecker}, that is:
	
	\begin{lemma}
		Let $\T$ be a countable a.d.f.s.\ family, $p \in \SS^{\aleph_0}$ and $\dot{g}$ be a name for a real such that
		$$
			p \forces \dot{g} \notin V \text{ and } \ \forall T \in \T \ \dot{g} \notin [T].
		$$
		Then there is $q \extends p$ and a finitely splitting tree $S$ such that $\T \cup \simpleset{S}$ is an a.d.f.s.\ family and
		\[
			q \forces \dot{g} \in [S].
		\]
	\end{lemma}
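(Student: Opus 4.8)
The plan is to read $\dot g$ continuously, pass to the continuous image of a suitably fused condition, and take $S$ to be the tree of that (necessarily compact) image. First I would apply Lemma~\ref{LEM_ContinuousReadingOfNames} to obtain $q_0 \extends p$ and a code $g$ with $q_0 \forces \dot g = g^*(s_\gen)$, where $g^*:\cantorseqspace \to \bairespace$ is the induced continuous function. Enumerate $\T = \set{T_m}{m < \omega}$. For a.d.f.s.\ families the relevant intruder formula is $\chi_1(v, w) \equiv$ ``$v \notin [w]$'', which is arithmetical, and by hypothesis $q_0 \forces \chi_1(g^*(s_\gen), T_m)$ for every $m < \omega$. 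Hence Lemma~\ref{LEM_ForcingToAbsolute} applies and shows that for each $m$ the set
\[
	D_m := \set{r \extends q_0}{\forall x \in [r] \ g^*(x) \notin [T_m]}
\]
is dense open below $q_0$.

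Next I would construct a fusion sequence $\seq{q_m}{m < \omega}$ below $q_0$: given $q_m$, use Lemma~\ref{LEM_ExtendDenseOpen} to pick $q_{m + 1} \extends_m q_m$ with $q_{m + 1} \restr \sigma \in D_m$ for every $\sigma$ suitable for $q_m$ and $m$. Since $\set{q_{m + 1} \restr \sigma}{\sigma \text{ suitable for } q_m \text{ and } m}$ is a maximal antichain below $q_{m + 1}$, we have $[q_{m + 1}] = \bigcup_\sigma [q_{m + 1} \restr \sigma]$ and therefore $g^*(x) \notin [T_m]$ for all $x \in [q_{m + 1}]$. Letting $q$ be the fusion, $q \extends q_{m + 1}$ gives $[q] \subseteq [q_{m + 1}]$ for every $m$, so that
\[
	\forall x \in [q] \ \forall m < \omega \ g^*(x) \notin [T_m].
\]

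Finally I would set $S := \set{s \in \finbairespace}{\exists x \in [q] \ s \subseteq g^*(x)}$, the tree associated with $g^*[[q]]$. Since $[q] \subseteq \cantorseqspace$ is compact (a countable product of the compact sets $[q(k)]$) and $g^*$ is continuous, the image $g^*[[q]]$ is a non-empty compact subset of $\bairespace$, so $S$ is a finitely splitting tree with $[S] = g^*[[q]]$. The previous display yields $[S] \cap [T_m] = \emptyset$ for every $m$; as $S$ and each $T_m$ are finitely splitting, König's lemma then gives that $S \cap T_m$ is finite, so $\T \cup \simpleset{S}$ is an a.d.f.s.\ family. Moreover $q$ forces $s_\gen \in [q]$, and membership in the ground-model tree $S$ is absolute, so $q \forces \dot g = g^*(s_\gen) \in g^*[[q]] = [S]$, which is exactly $q \forces \dot g \in [S]$.

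The main obstacle is not the single generic value---$\dot g \notin [T_m]$ is already forced---but passing to the \emph{entire} compact image $g^*[[q]]$: almost disjointness of $S$ with $T_m$ requires $g^*(x) \notin [T_m]$ for \emph{every} ground-model branch $x \in [q]$, and arranging this for all $m$ simultaneously. This is exactly where Lemma~\ref{LEM_ForcingToAbsolute} is essential, converting the forcing statement into the pointwise statement over $[r]$ that the fusion can diagonalize, while compactness of $[q]$ guarantees that the resulting $S$ is finitely splitting.
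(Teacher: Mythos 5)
Your argument is correct, and it is worth noting that the paper itself does not prove this lemma at all: it is quoted as Lemma~4.8 of \cite{FischerSchembecker}, so there is no in-paper proof to compare against. What you give is a legitimate self-contained derivation from the paper's own toolkit, and it is structured quite differently from the elimination-of-intruders proofs the paper does carry out (for mad, med and cofinitary families), which build the new witness by hand via a double fusion that repeatedly decides values of $\dot g$. You instead use Lemma~\ref{LEM_ForcingToAbsolute} to convert the forced statements $q_0 \forces g^*(s_\gen) \notin [T_m]$ into density of the sets $D_m$, run a single fusion through them, and then let compactness of $[q]$ and continuity of $g^*$ hand you the finitely splitting tree $S$ for free; this exploits the special feature of the a.d.f.s.\ type that the witness is itself a compact set, so no pointwise construction is needed. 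Two small points of polish: the set $D_m$ is dense open below $q_0$ and hence below each $q_m$, which is what Lemma~\ref{LEM_ExtendDenseOpen} actually needs; and the final step deserves the same justification as the $(2)\Rightarrow(1)$ direction of Lemma~\ref{LEM_ForcingToAbsolute} --- the statement $\forall x \in [q]\ g^*(x) \in [S]$ is a true $\Pi^1_1$ statement in the ground model (every finite piece $g(x \restr n \times n)$ lies on a ground-model branch of $q$, hence in $S$), so it persists to the extension, and combined with $q \forces s_\gen \in [q]$ it yields $q \forces g^*(s_\gen) \in [S]$; the bare phrase ``membership in $S$ is absolute'' elides the fact that $[q]$ acquires new branches in the extension. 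Neither point is a gap, just wording.
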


	Thus, together with Theorem \ref{THM_SacksUniversal} we also obtain Theorem~4.17 in \cite{FischerSchembecker}:
	
	\begin{corollary}
		Assume $\sf{CH}$.
		Then there is a universally Sacks-indestructible a.d.f.s.\ family (partition of Baire space into compact sets).
	\end{corollary}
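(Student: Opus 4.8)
The plan is to build, under $\sf{CH}$, an $\SS^{\aleph_0}$-indestructible family of type $\t$ by a transfinite recursion of length $\aleph_1$ that kills one potential name for an intruder at each successor step via $\eoi(\t)$, and then to invoke Theorem~\ref{THM_MainTheorem} to upgrade $\SS^{\aleph_0}$-indestructibility to universal Sacks-indestructibility. First I would fix an enumeration $\seq{(p_\alpha, g_\alpha)}{\alpha < \aleph_1}$ of all pairs consisting of a condition $p_\alpha \in \SS^{\aleph_0}$ and a code $g_\alpha$; each such pair is coded by a single real, so under $\sf{CH}$ there are exactly $\aleph_1$ of them. Since continuous reading of names (Lemma~\ref{LEM_ContinuousReadingOfNames}) lets me replace, below any condition, an arbitrary $\SS^{\aleph_0}$-name $\dot{g}$ for a real by a code $g$ with $q \forces \dot{g} = g^*(s_\gen)$, this enumeration exhausts every threat to indestructibility once it is put into the normal form $g^*(s_\gen)$.

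Next I would construct an increasing continuous sequence $\seq{\F_\alpha}{\alpha < \aleph_1}$ of families of type $\t$. Set $\F_0 := \emptyset$, of type $\t$ by Lemma~\ref{LEM_typeProperties}(1), and at a limit $\lambda$ put $\F_\lambda := \bigcup_{\alpha < \lambda} \F_\alpha$, again of type $\t$ by Lemma~\ref{LEM_typeProperties}(3). At a successor, given $\F_\alpha$ and $(p_\alpha, g_\alpha)$, I test whether $p_\alpha \forces g_\alpha^*(s_\gen)$ is an intruder for $\F_\alpha$. If this fails, set $\F_{\alpha + 1} := \F_\alpha$. If it holds, then $\F_\alpha$ is countable (at most one real is added per step and $\alpha < \aleph_1$), so $\eoi(\t)$ applies and furnishes $q \extends p_\alpha$ and a real $f$ with $\F_{\alpha + 1} := \F_\alpha \cup \simpleset{f}$ of type $\t$ and $q \forces g_\alpha^*(s_\gen)$ not an intruder for $\F_{\alpha + 1}$. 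Finally set $\F := \bigcup_{\alpha < \aleph_1} \F_\alpha$, which is of type $\t$ by Lemma~\ref{LEM_typeProperties}(3).

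To verify $\SS^{\aleph_0}$-indestructibility, suppose toward a contradiction that $p \forces \dot{g}$ is an intruder for $\F$ for some $p \in \SS^{\aleph_0}$ and name $\dot{g}$. By Lemma~\ref{LEM_ContinuousReadingOfNames} pass to $q \extends p$ and a code $g$ with $q \forces \dot{g} = g^*(s_\gen)$, so $q \forces g^*(s_\gen)$ is an intruder for $\F$; the pair $(q, g)$ occurs as $(p_\alpha, g_\alpha)$ for some $\alpha$. Here the decisive observation is a monotonicity read off from Definition~\ref{DEF_type}: because intruderhood asserts $\chi_n$ for \emph{all} finite subsets of the family and $\F_\alpha \subseteq \F$, being an intruder for $\F$ forces being an intruder for $\F_\alpha$. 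Thus the nontrivial branch fired at stage $\alpha$, yielding $r \extends q$ with $r \forces g^*(s_\gen)$ not an intruder for $\F_{\alpha + 1}$. But non-intruderhood is certified by a single finite subset $\simpleset{f_1, \dots, f_n} \in [\F_{\alpha + 1}]^n$ violating $\chi_n$, and $\F_{\alpha + 1} \subseteq \F$, so that very witness gives $r \forces g^*(s_\gen)$ not an intruder for $\F$, i.e.\ $r \forces \dot{g}$ not an intruder for $\F$---contradicting $r \extends p$.

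The only genuinely delicate step, and hence the main obstacle, is exactly this reflection between the stage-$\alpha$ family and the full union $\F$: at stage $\alpha$ I control only $\F_\alpha$ through $\eoi(\t)$, yet I must draw a conclusion about the threat $\dot{g}$ to $\F$. Both directions rest on the universal-over-finite-subsets shape of Definition~\ref{DEF_type} --- being an intruder passes down to every subfamily, while failing to be one is certified by a single finite witness that persists into $\F$. The rest is routine bookkeeping: the families $\F_\alpha$ remain countable so that $\eoi(\t)$ (Definition~\ref{DEF_EliminatingIntrudersLemma}) always applies, and the enumeration together with continuous reading of names guarantees that each name for a potential intruder is confronted at some successor stage. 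With $\SS^{\aleph_0}$-indestructibility in hand, Theorem~\ref{THM_MainTheorem} delivers the universally Sacks-indestructible family of type $\t$.
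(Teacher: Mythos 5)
Your proposal is correct and follows essentially the same route as the paper: the paper obtains this corollary by combining the elimination-of-intruders lemma for a.d.f.s.\ families (quoted from Lemma~4.8 of \cite{FischerSchembecker}) with Theorem~\ref{THM_SacksUniversal}, whose proof is exactly the transfinite recursion you describe (enumerate pairs of conditions and codes under $\sf{CH}$, use continuous reading of names and $\eoi(\t)$ at successor stages, then upgrade via Theorem~\ref{THM_MainTheorem}). The only thing you leave implicit is that the two instance-specific ingredients --- that maximal a.d.f.s.\ families form an arithmetical type and that $\eoi$ holds for them --- must be supplied, but both are established immediately before the corollary in the paper.
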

	
	Finally, by Lemma~4.20 in \cite{FischerSchembecker} exactly the same arguments also work for almost disjoint families of nowhere dense trees, which correspond to partitions of $\cantorspace$ into closed sets.
	
	\subsection{Maximal cofinitary groups}\label{SECSUB_MCG}
	
	Next on our list is a more algebraic example, namely maximal cofinitary groups.
	As with the other type of families we first show that also maximal cofinitary groups are an arithmetical type.
	Very similarly to med families we will also construct a universally Sacks-indestructible maximal cofinitary group.
	We will also carefully set up nice words again, since there are some inaccuracies in the literature.
	For the remainder of this section fix a set $A$, which will serve as an index set.
	
	\subsubsection{Definitions and notations}
	
	We denote with $W_A$ the set of all reduced words in the language $A^{\pm1} := \set{a^{i}}{a \in A \text{ and } i = \pm1}$.
	$W_A$ is a group with concatenate-and-reduce as group operation.
	$W_A$ satisfies the universal property of the free group generated by $A$, i.e.\ for every group $G$ any map $\rho:A \to G$ uniquely extends to a group homomorphism $\hat{\rho}: W_A \to G$.
	
	Analogously, with $M_A$ we denote the set of all words in the language $A^{\pm1}$.
	$M_A$ is a monoid with concatenate as monoid operation.
	$M_A$ satisfies the universal property of the free monoid generated by $A^{\pm1}$, i.e.\ for every monoid $M$ any map $\rho:A^{\pm1} \to M$ uniquely extends to a monoid homomorphism $\hat{\rho}:M_A \to M$.
	
	$\Sinf$ denotes the set of all permutations of $\omega$ and $\Sinffin$ the set of all finite partial injections $f:\omega \smash{\partialto} \omega$.
	For $f \in \Sinffin$ and $n < \omega$ we write $f(n) \defined$ iff $n \in \dom(f)$ and $f(n) \undefined$ otherwise.
	Set $\Sinfplus := \Sinf \cup \Sinffin$.
	Then $\Sinf$ is a group with concatenation, whereas $\Sinfplus$ is only a monoid.
	In fact, $\Sinf$ are exactly the invertible elements of $\Sinfplus$.
	For $f \in \Sinfplus$ let
	$$
		\fix(f) := \set{n < \omega}{f(n) = n}
	$$
	be the set of fixpoints of $f$.
	Further, define the set of all cofinitary permutations
	$$
		\cofin(\Sinf) := \set{f \in \Sinf}{\fix(f) \text{ is finite}}.
	$$
	We say $\rho:A \to \Sinf$ induces a cofinitary representation iff the induced map $\hat{\rho}:W_A \to \Sinf$ satisfies $\ran(\hat{\rho}) \subseteq \cofin(\Sinf) \cup \simpleset{\id}$.
	Analogously, we call a subgroup $G$ of $\Sinf$ cofinitary iff $G \subseteq \cofin(\Sinf) \cup \simpleset{\id}$.
	Note that $G$ is cofinitary iff there is cofinitary representation $\hat{\rho}:W_A \to \Sinf$ with $\ran(\hat{\rho}) = G$.
	$G$ is called maximal iff it is maximal {w.r.t.} to inclusion.
	Analogously, a cofinitary representation $\rho$ is called maximal iff $\ran(\hat{\rho})$ is a maximal cofinitary group.
	
	If $\rho: A \to \Sinfplus$ its induced map $\hat{\rho}:M_A \to \Sinfplus$ is defined as follows.
	Define $\rho^{\pm1}:A^{\pm1} \to \Sinfplus$ by
	$$
	\rho^{\pm1}(a^i) := \begin{cases}
		\rho(a) & \text{if i = 1}\\
		\rho(a)^{-1} & \text{if i = -1}
	\end{cases}
	$$
	Then let $\hat{\rho}:M_A \to \Sinfplus$ be the induced map for $\rho^{\pm1}$  given by the universal property of $M_A$.
	Note that this map coincides with the induced map given by the universal property of $W_A$ in case that $\rho:A \to \Sinf$.

	From now on we usually identify $\rho$ and its induced map $\hat{\rho}$, write $\epsilon$ for the empty word and $|w|$ for the length of a word $w$.
	Further, we fix $x \notin A$ and if $\rho:A \to \Sinf$ induces a cofinitary representation and $f \in \Sinfplus$ we write $\rho[f]$ for $\rho \cup (x,f)$.
	
	\subsubsection{Arithmetical definability}
	
	\begin{proposition}
		Maximal cofinitary groups are an arithmetical type.
	\end{proposition}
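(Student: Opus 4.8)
The plan is to follow the template set by the mad and med family cases: exhibit the two sequences $(\psi_n)_{n<\omega}$ and $(\chi_n)_{n<\omega}$ explicitly and verify both that they are arithmetical and that the induced notions of $\dom(\t)$ and of intruder coincide with ``$\F$ codes a cofinitary group'' and ``$\F$ codes a non-maximal one''. First I would fix an arithmetical formula $\mathrm{Perm}(w)$ expressing that a real $w$ codes a bijection $\omega\to\omega$ (totality, injectivity and surjectivity are all arithmetical). Next, using the coding $\code$ already introduced, I would fix for each finite alphabet the (in fact recursive, hence arithmetical) predicate ``$u$ codes a reduced word in $\simpleset{x_0^{\pm 1},\dots,x_n^{\pm 1}}$'', and likewise ``$u$ codes a reduced word in $\simpleset{x^{\pm 1},x_1^{\pm 1},\dots,x_n^{\pm 1}}$ in which $x$ occurs''. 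I will write $\hat{\rho}(u)$ for the evaluation of the word $u$ under the substitution sending each symbol to its corresponding real argument.

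The key technical ingredient is a single arithmetical evaluation predicate $\mathrm{Eval}(u,\vec w,k,m)$ expressing $\hat{\rho}(u)(k)=m$. Since applying a coded permutation $w_i$ to a point is direct while applying its inverse is an existential number search, I would express the value of $\hat{\rho}(u)$ at $k$ by asserting the existence of a finite computation sequence $k=a_0,a_1,\dots,a_{|u|}=m$ in which each $a_{j+1}$ is obtained from $a_j$ by applying the generator (or inverse) indicated by the $j$-th letter of $u$; coding such sequences by a single natural number makes this arithmetical uniformly in $u$. Given $\mathrm{Eval}$, the statements ``$\fix(\hat{\rho}(u))$ is finite'' and ``$\hat{\rho}(u)=\id$'' become $\exists N\,\forall k>N\ \hat{\rho}(u)(k)\neq k$ and $\forall k\ \hat{\rho}(u)(k)=k$, both arithmetical, and hence so is ``$\hat{\rho}(u)\in\cofin(\Sinf)\cup\simpleset{\id}$''.

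With this in hand I would let $\psi_n(w_0,\dots,w_n)$ assert that each $w_i$ satisfies $\mathrm{Perm}$ and that for every reduced word $u$ in $\simpleset{x_0^{\pm 1},\dots,x_n^{\pm 1}}$ the value $\hat{\rho}(u)$ is cofinitary or the identity; this is exactly the requirement that $\simpleset{w_0,\dots,w_n}$ generate a cofinitary representation, and it is symmetric in its arguments (the set of reduced words is closed under permuting symbols), so it is well defined on the unordered sets of $[\F]^{n+1}$. Dually I would let $\chi_n(v,w_1,\dots,w_n)$ assert that $v$ satisfies $\mathrm{Perm}$, that $v$ is not equal to $\hat{\rho}(u)$ for any reduced word $u$ in $\simpleset{x_1^{\pm 1},\dots,x_n^{\pm 1}}$ (non-membership), and that for every reduced word $u$ in $\simpleset{x^{\pm 1},x_1^{\pm 1},\dots,x_n^{\pm 1}}$ in which $x$ occurs, the value $\hat{\rho}(u)$ under $x\mapsto v$, $x_i\mapsto w_i$ is cofinitary or the identity. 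Both formulas are arithmetical, since the outer quantifier over word-codes $u$ is merely a number quantifier over the arithmetical $\mathrm{Eval}$. Matching Definition~\ref{DEF_type}, $\dom(\t)$ is then precisely the class of families coding cofinitary groups, and a real $g$ satisfies every $\chi_n$ exactly when $g\notin\langle\F\rangle$ and $\langle\F,g\rangle$ is again cofinitary, i.e.\ when $g$ witnesses non-maximality.

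The step requiring care, and where the literature is imprecise, is the verification that these \emph{local} conditions (each seeing only finitely many generators) assemble to the \emph{global} statements ``$g\notin\langle\F\rangle$'' and ``$\langle\F,g\rangle$ is cofinitary''. I expect this to be the main obstacle. The point is that every element of $\langle\F,g\rangle$ is the value of a single reduced word, which uses only finitely many members of $\F$, so quantifying over all $n$ and all $[\F]^n$ does recover the global notions; the work lies in reducing an arbitrary reduced word over $\F\cup\simpleset{g}$ to a nice normal form $s_0\,x^{k_1}\,s_1\cdots x^{k_m}\,s_m$ with the $s_i$ coding elements of $\langle\F\rangle$ and $k_i\neq 0$, and checking that the cofinitariness demands on these nice words are exactly equivalent to cofinitariness of the whole extension. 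I would be careful here to allow torsion (hence to require ``cofinitary \emph{or} identity'' rather than strict finiteness, and to keep the non-membership clause explicit), which is precisely the nice-words setup I would develop and correct in the dedicated subsections on maximal cofinitary groups; granting that analysis, the present proposition follows.
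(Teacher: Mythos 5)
Your proposal is correct and follows the same basic strategy as the paper: arithmetize the evaluation of group words in the coded permutations and assert, word by word, that the result is the identity or has finitely many fixed points. The technical packaging differs in two ways worth recording. First, the paper fixes an enumeration $\seq{u_n}{n < \omega}$ of all reduced words (with $u_n$ using only letters up to $n$) and lets each $\psi_n$, $\chi_n$ test the \emph{single} word $u_n$ with a fixed block of $\left|u_n\right|$ number quantifiers, outsourcing the quantification over words to the outer ``$\forall n\,\forall\simpleset{f_0,\dots,f_n}$'' of Definition~\ref{DEF_type}; you instead build a uniform evaluation predicate $\mathrm{Eval}(u,\vec w,k,m)$ via coded computation sequences and quantify over all word-codes \emph{inside} each $\psi_n$, $\chi_n$. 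Both yield arithmetical formulas, but your version is symmetric in its real arguments (so evaluation on unordered sets in $[\F]^{n+1}$ is unambiguous) and tests every word over a fixed finite tuple of generators, whereas in the paper's version a given word is only tested on subsets of the particular size at which it appears in the enumeration. Second, you explicitly include the non-membership clause ``$v\neq\hat\rho(u)$ for every word $u$ over $w_1,\dots,w_n$'' in $\chi_n$; the paper's $\chi_n$ omit it, and its $\chi_0$ as printed asserts $v\notin\Sinf$, which appears to be a typo for $v\in\Sinf$. Since membership of $g$ in the generated group must be excluded for ``intruder'' to mean ``witness of non-maximality,'' your formalization is the more careful one on this point. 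One economy you could make: the reduction to nice words that you defer to at the end is not needed for this proposition --- your $\chi_n$ already quantifies over all reduced words containing $x$, which directly captures cofinitariness of the extension; nice words only become necessary later, for the elimination-of-intruders argument.
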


	\begin{proof}
		Define the formula $\psi_0(w_0)$ to be
		$$
			(\forall n \forall m \ v(n) = v(m) \text{ implies } n = m) \text{ and } (\forall n \exists m \ v(m) = n),
		$$
		expressing \textquoteleft $w_0 \in \Sinf$'.
		Next, we fix an enumeration $\seq{u_n}{1 < n < \omega}$ of $W_{\NN}$, so that $u_n$ only contains natural numbers up to $n$ as letters.
		For $n > 0$ we define $\psi_n(w_0, \dots, w_n)$ to be
		\begin{align*}
			\forall k_0 \exists k_1, \dots, \exists k_{\left|u_n\right|} &(\bigwedge_{i = 0}^{\left|u_n\right| - 1}\pi_i(v, w_1, \dots, w_n, k_i, k_{i + 1}) \text{ and } k_0 = k_{\left|u_n\right|})\\
			\text{or } \exists K \forall k_0,  \exists k_1, \dots, \exists k_{\left|u_n\right|} \ &(K < k_0 \text{ implies } (\bigwedge_{i = 0}^{\left|u_n\right| - 1}\pi_i(v, w_1, \dots, w_n, k_i, k_{i + 1}) \text{ and }k_0 \neq k_{\left|u_n\right|})),
		\end{align*}
		expressing \textquoteleft$\rho(u_n) = \id$ or $\rho(u_n)$ has finitely many fixpoints', where $\rho$ is defined by $m \mapsto w_m$.
		Here, for $u_n := y_{\left|u_n\right| - 1} \dots y_0$ the formula $\pi_i(v, w_1, \dots, w_n, k_i, k_{i + 1})$ is defined as
		$$
		\begin{cases}
			k_{i + 1} = w_{m}(k_i) & \text{if } y_i = m \text{ for } m \in \NN,\\
			k_{i} = w_{m}(k_{i + 1}) & \text{if } y_i = m^{-1} \text{ for } m \in \NN,
		\end{cases}
		$$
		expressing \textquoteleft$\rho(y_i)(k_i) = k_{i + 1}$'.
		Analogously, we define $\chi_0(v)$ as
		$$
			(\exists n \exists m \ n \neq m \text{ and } v(n) = v(m)) \text{ or } (\exists n \forall m \ v(m) \neq n),
		$$
		expressing $v \notin \Sinf$.
		Finally, fix an enumeration $\seq{u_n}{1 < n < \omega}$ of $W_{\NN}$, so that $u_n$ only contains natural numbers up to $n$ as letters.
		Analogously, for $n > 0$ there is an arithmetical formula $\chi_n(v, w_1, \dots, w_n)$ expressing
		$$
			\rho(u_n) = \id \text{ or } \rho(u_n) \text{ has finitely many fixpoints},
		$$
		where $\rho$ is defined by $0 \mapsto v$ and $m \mapsto w_m$ for $m > 0$.
		Thus, maximal cofinitary groups are an arithmetical type.
	\end{proof}
	
	Thus, using Theorem~\ref{THM_MainTheorem} we obtain the following new fact:
	
	\begin{corollary}
		Every $\SS^{\aleph_0}$-indestructible m.c.g.\ is universally Sacks-indestructible.
	\end{corollary}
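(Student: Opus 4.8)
The plan is to obtain this as an immediate instance of Theorem~\ref{THM_MainTheorem}, in exactly the way the analogous corollaries were derived for mad, med and a.d.f.s.\ families. The preceding Proposition already supplies an arithmetical type $\t$ whose family formulas $(\psi_n)_{n<\omega}$ encode that a set of permutations generates a cofinitary group and whose intruder formulas $(\chi_n)_{n<\omega}$ encode that a real is a permutation enlarging that group. So the first thing I would do is make the identification between the concrete and the abstract objects explicit: a maximal cofinitary group $G$ is regarded as the family $\F = G \subseteq \Sinf \subseteq \bairespace$ of its elements (equivalently, one may take a generating set). Since every permutation of $\omega$ is already a point of $\bairespace$, no auxiliary coding is needed here, in contrast to the mad-family case.

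With this dictionary, I would check that $\F$ is of type $\t$ precisely when $\langle\F\rangle$ is cofinitary: by the universal property of $W_A$, the conjunction of the $\psi_n$ over all finite subsets of $\F$ says that every reduced word in finitely many members of $\F$ evaluates to $\id$ or to a cofinitary permutation, which is exactly cofinitariness of $\langle\F\rangle$. Dually, the formulas $\chi_n$ express that $\rho[g]$ is again a cofinitary representation, so that $g$ being an intruder for $\F$ says precisely that $g$ witnesses non-maximality of $\langle\F\rangle$. Because the $\psi_n$ and $\chi_n$ are arithmetical, both notions are absolute between transitive models, so \textquoteleft maximal cofinitary group\textquoteright\ and \textquoteleft family of type $\t$ with no intruder\textquoteright\ agree in every forcing extension. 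Hence $\SS^{\aleph_0}$-indestructibility of the m.c.g.\ $G$ is, by definition, $\SS^{\aleph_0}$-indestructibility of the corresponding family of type $\t$, and Theorem~\ref{THM_MainTheorem} applies verbatim to give that $G$ is preserved by every countably supported product or iteration of Sacks forcing of any length, i.e.\ universally Sacks-indestructible.

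The step I expect to be the main obstacle is the bookkeeping required to be certain that the intruder formulas capture \emph{non-maximality} and nothing weaker: one must check that a real $g$ satisfying every $\chi_n(g,f_1,\dots,f_n)$ really corresponds to a \emph{proper} cofinitary extension of $\langle\F\rangle$ rather than to a permutation already lying in the group. In the ground model this has to be read off from the way the $\chi_n$ are set up in the Proposition; in the forcing context it is automatic, since an intruder introduced by $\SS^{\aleph_0}$ is a new real and hence lies outside the ground-model group $G$. Once this correspondence is confirmed, the remaining translation is routine and absolute, and the corollary follows with no further work.
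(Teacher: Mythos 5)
Your proposal is correct and follows exactly the paper's route: the corollary is obtained as an immediate instance of Theorem~\ref{THM_MainTheorem} applied to the arithmetical type furnished by the preceding Proposition, with no further argument in the paper itself. Your extra care about the dictionary between families of type $\t$ and cofinitary groups, and about intruders genuinely witnessing non-maximality, is sensible elaboration but does not change the approach.
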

	
	\subsubsection{Nice words and range/domain extension}
	
	In this subsection we reintroduce nice words and reprove their corresponding range and domain extension lemmata, which are the crucial tools to approximate elements of $\Sinf$ by finite segments.
	First, we prove that if we are only interested in the number of fixpoints of $\rho(w)$, then we can also equivalently consider any cyclic permutation of $w$.
	
	\begin{proposition}\label{prop_same_number_of_fixpoints}
		Let $\rho:A \to \Sinf$ and $u,v \in W_A$.
		Then $|\fix(\rho(uv))| = |\fix(\rho(vu))|$.
		In fact, there is a bijection given by $\pi:n \mapsto \rho(v)(n)$.
	\end{proposition}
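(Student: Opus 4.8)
The plan is to verify directly that the map $\pi = \rho(v)$, which is a genuine permutation of $\omega$ since $\rho(v) \in \Sinf$, restricts to a bijection from $\fix(\rho(uv))$ onto $\fix(\rho(vu))$. Recalling that $\hat{\rho}$ is a group homomorphism, so that $\rho(uv) = \rho(u) \circ \rho(v)$ and $\rho(vu) = \rho(v) \circ \rho(u)$, the whole argument reduces to a short computation; I expect no genuine obstacle beyond fixing the composition convention consistently.

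First I would show that $\pi$ sends fixpoints to fixpoints. Suppose $n \in \fix(\rho(uv))$, i.e.\ $\rho(u)(\rho(v)(n)) = n$, and put $m := \pi(n) = \rho(v)(n)$. Then
\[
\rho(vu)(m) = \rho(v)\big(\rho(u)(\rho(v)(n))\big) = \rho(v)(n) = m,
\]
so $m \in \fix(\rho(vu))$. Thus $\pi$ maps $\fix(\rho(uv))$ into $\fix(\rho(vu))$, and since $\pi$ is injective on all of $\omega$ it is in particular injective on $\fix(\rho(uv))$.

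For surjectivity I would run the symmetric argument with the roles of $u$ and $v$ exchanged: the map $\rho(u)$ sends $\fix(\rho(vu))$ into $\fix(\rho(uv))$ by the identical computation. Since $\rho(u) \circ \rho(v) = \rho(uv)$ fixes every $n \in \fix(\rho(uv))$, the composite $\rho(u) \circ \pi$ acts as the identity on $\fix(\rho(uv))$, and likewise $\pi \circ \rho(u)$ acts as the identity on $\fix(\rho(vu))$. Hence $\pi$ and $\rho(u)$ are mutually inverse bijections between the two fixpoint sets, which yields both the claimed bijection and the equality $|\fix(\rho(uv))| = |\fix(\rho(vu))|$.

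The only point requiring care is the convention for how a reduced word acts, namely whether $\rho(uv)$ denotes $\rho(u) \circ \rho(v)$ or $\rho(v) \circ \rho(u)$; once this is fixed to match the action defined earlier via the formulas $\pi_i$, the computation above goes through verbatim, and the choice of convention only swaps the roles of $\pi = \rho(v)$ and its partner $\rho(u)$.
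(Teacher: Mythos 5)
Your proof is correct and takes essentially the same approach as the paper: both verify by the same conjugation computation that $\pi = \rho(v)$ carries $\fix(\rho(uv))$ into $\fix(\rho(vu))$ injectively. The only cosmetic difference is that you exhibit the inverse as $\rho(u)$ while the paper uses $\rho(v^{-1})$; since $\rho(v)(\rho(u)(m)) = m$ for $m \in \fix(\rho(vu))$, these two maps agree on that set, so the arguments are interchangeable.
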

	
	\begin{proof}
		$\pi$ is injective as $\rho(v) \in \Sinf$.
		Let $n \in \fix(\rho(uv))$.
		Then
		$$
			\rho(vu)(\pi(n)) = \rho(vu)(\rho(v)(n)) = \rho(vuv)(n) = \rho(v)(\rho(uv)(n)) = \rho(v)(n) = \pi(n),
		$$
		i.e.\ $\pi(n) \in \fix(vu)$.
		Now, let $n \in \fix(\rho(vu))$, then $\rho(v^{-1})(n) \in \fix(\rho(uv))$ since
		$$
			\rho(uv)(\rho(v^{-1})(n)) = \rho(v^{-1}vuvv^{-1})(n) = \rho(v^{-1})(\rho(vu)(n)) = \rho(v^{-1})(n).
		$$
		But $\pi(\rho(v^{-1}(n))) = \rho(v)(\rho(v^{-1})(n)) = \rho(vv^{-1})(n) = n$, thus $\pi$ is surjective.
	\end{proof}
	
	From now on assume that $\rho:A \to \Sinf$ induces a cofinitary permutation.
	
	\begin{definition}
		We call two words $w,v \in W_{A \cup \simpleset{x}}$ equivalent (with respect to $\rho$) and write $w \sim_\rho v$ iff $[x]_{\sim_\rho} = [v]_{\sim_\rho}$, where $[x]_{\sim_\rho}$ is the equivalence class of $w$ in $W_A / \ker(\rho)$.
	\end{definition}
	
	\begin{definition}
		Define $W^0_{\rho,x} := W_A \setminus \ker(\rho)$.
		For $n > 0$ define $W^n_{\rho, x}$ to be the set of all reduced words $w \in W_{A \cup \simpleset{x}}$ of the form $w = x^{\pm n}$ or
		$$
			w = u_l x^{k_l} u_{l-1} x^{k_{l-1}} \dots u_1 x^{k_1} u_0x^{k_0}
		$$
		
		for some $l < \omega$ and $u_i \in W^0_{\rho,x}$, $k_i \in \ZZ \setminus \simpleset{0}$ for $i \leq l$ and $\sum_{i = 0}^{l}|k_i| = n$.
		Finally, we set $W_{\rho, x} := \bigcup_{n > 0} W^n_{\rho,x}$.
		We call $W_{\rho, x}$ the set of all nice words (with respect to $\rho$).
		Further, we say a reduced word $w$ is split into $uv$ iff $w = uv$ without reducing.
	\end{definition}
	
	\begin{lemma} \label{lem_split_into_nice_word}
		Every word $w \in W_{A \cup \simpleset{x}}$ can be split as $w = uv$ for $u,v \in W_{A \cup \simpleset{x}}$ such that $vu$ is equivalent to a word in $W_A$ or equivalent to a nice word with respect to $\rho$.
	\end{lemma}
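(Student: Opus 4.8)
The plan is to pass to the quotient group in which $\sim_\rho$ becomes equality, perform the cyclic reduction there, and check that every move is realized by an honest split of $w$. Set $G := \ran(\hat\rho) \leq \Sinf$ and form the free product $Q := G * \langle x \rangle$; since $W_{A \cup \simpleset{x}} = W_A * \langle x \rangle$ and $\ker(\rho) \trianglelefteq W_A$, the quotient by the normal closure of $\ker(\rho)$ is exactly $Q$. The relation $\sim_\rho$ identifies two words precisely when they agree under the canonical homomorphism $\Phi : W_{A \cup \simpleset{x}} \to Q$ extending $\hat\rho$ on $W_A$ and sending $x$ to the free generator, so $\sim_\rho$-classes correspond to elements of $Q$. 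Under $\Phi$ the words of $W_A$ map onto $G$, while the nice words map onto those elements of $Q$ that are either $x^{\pm n}$ with $n>0$ or a reduced alternating product whose first syllable lies in $G \setminus \simpleset{\id}$ and whose last syllable is a nonzero power of $x$: indeed $u_l x^{k_l} \cdots u_0 x^{k_0}$ maps to $\hat\rho(u_l) x^{k_l} \cdots \hat\rho(u_0) x^{k_0}$, and conversely every such alternating element is hit since each $G$-syllable $g \neq \id$ is $\hat\rho$ of some $u \in W_A \setminus \ker(\rho)$. Thus the lemma becomes: there is a split $w = uv$ with $\Phi(vu)$ in $G$, or equal to some $x^{\pm n}$, or a reduced alternating product starting in $G \setminus \simpleset{\id}$ and ending in a power of $x$.

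I would first record that for a split $w = uv$ one has $vu = v\,w\,v^{-1}$, so cyclic permutations of $w$ realize conjugations of $\Phi(w)$ by images of suffixes of $w$. The degenerate cases are then immediate from the trivial split $u = w$, $v = \epsilon$: if $\Phi(w) \in G$ then $vu = w$ is $\sim_\rho$ a word in $W_A$, and if $\Phi(w) = x^{n}$ with $n \neq 0$ then $vu \sim_\rho x^{\pm n}$, a nice word. So the substantive case is $\Phi(w) \notin G \cup \langle x \rangle$, where the normal form of $\Phi(w)$ carries at least one $G$-syllable and at least one $x$-syllable.

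This case I would treat by induction on the number of maximal $x$-power syllables of the reduced word $w$ (equivalently on $|w|$). Writing $w$ in syllable form and rotating by a syllable-aligned split, I arrange that $w$ begins with an $A$-syllable and ends with a power of $x$: cyclically adjacent boundary $A$-syllables are merged, and cyclically adjacent $x$-powers are combined, where any seam-merge producing free cancellation strictly shortens the word and lets the inductive hypothesis finish (its splits being splits of $w$). With $w = b_0 x^{j_1} b_1 \cdots x^{j_m}$ in this form I pass to $Q$ and delete, via $\sim_\rho$, every syllable $b_i \in \ker(\rho)$. If all of them die, then $\Phi(vu) = x^{j_1 + \cdots + j_m}$, which lies in $\langle x \rangle$ or, if the exponents sum to $0$, in $G$, and we are done. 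Otherwise I rotate once more to begin at a surviving (hence nontrivial) $A$-syllable and end at a power of $x$, then collapse the remaining dead interior syllables; this merges the $x$-powers flanking each collapsed syllable. If no such merge cancels an $x$-power, the reduced alternating product already begins in $G \setminus \simpleset{\id}$ and ends in a power of $x$, as required, while if some merge cancels an $x$-power to $0$ the two flanking $A$-syllables coalesce, the number of $x$-syllables strictly drops, and the inductive hypothesis applies.

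The main obstacle, which the induction is designed to absorb, is the clash between the two available kinds of move: honest cyclic permutations act on the \emph{free-group} word $w$, whereas the collapses $b_i \sim_\rho \epsilon$ are legitimate only in $Q$ and can retroactively create new seam cancellations, i.e.\ a cascade in which killing a trivial $A$-syllable annihilates two $x$-powers and thereby forces two further $A$-syllables together. Inducting on the number of $x$-syllables is exactly what controls this cascade, since each such event strictly decreases that count; and because every rotation used is an honest split, the good cyclic permutation found for the reduced word pulls back to a genuine split $w = uv$ of the original word, which is what the statement demands.
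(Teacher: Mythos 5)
Your argument is correct in substance but takes a genuinely different route from the paper's. The paper works inside the set of all words $\sim_\rho$-equivalent to some cyclic permutation $vu$ of $w$, picks a representative of minimal length (and, among those, with minimal trailing $W_A$-part and maximal leading $x$-power), and argues from minimality that this representative is in $W_A$ or is nice. You instead pass to the quotient $Q = G * \langle x \rangle$, observe that $\sim_\rho$ is exactly equality in $Q$, characterize the images of nice words as the elements of $Q$ whose normal form begins with a nontrivial $G$-syllable and ends with a nonzero $x$-power (together with the nontrivial powers of $x$), and then perform cyclic reduction of the normal form while checking that each move is realized by an honest split. Your characterization of the images of nice words is right (every $g \in G \setminus \simpleset{\id}$ is $\hat\rho(u)$ for some $u \in W_A \setminus \ker(\rho)$, and the assembled word is reduced), and your reformulation makes explicit the combinatorial content that the paper's phrase ``we may adjust the split'' leaves implicit: that suffix-conjugation suffices to cyclically reduce and correctly orient the normal form. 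What your route buys is a clean separation between a conjugacy statement in a free product and the bookkeeping of splits; what the paper's buys is brevity.

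One step in your inductive argument needs to be spelled out, and it is precisely the step your closing sentence does not cover. When a collapse $b_i \sim_\rho \epsilon$ cancels an $x$-power and you recurse, the new word is obtained from a cyclic permutation $vu$ of $w$ by deleting an interior chunk of trivial $\Phi$-image (and freely reducing); it is not itself a cyclic permutation of $w$, so its splits are not splits of $w$, and ``every rotation used is an honest split'' does not justify the pull-back. The repair is routine but should be stated: any suffix $v_2$ of the collapsed word lifts, by re-inserting the deleted chunk, to a suffix $\tilde v_2$ of $vu$ with $\Phi(\tilde v_2) = \Phi(v_2)$, so the conjugate produced by the recursive call equals $\Phi(\tilde v_2)\Phi(vu)\Phi(\tilde v_2)^{-1}$; and reductions of cyclic permutations of $vu$ are reductions of cyclic permutations of $w$ (if $w = uv$ and $vu$ reduces to $v'u'$ with $v = v'z$ and $u = z^{-1}u'$, a split of $v'u'$ lifts to a split of $w$ across the cancelling piece $z$). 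Relatedly, the induction should run on $|w|$ rather than on the number of $x$-syllables alone, since a seam cancellation between two $A$-blocks shortens the word without changing that count; as you note parenthetically, $|w|$ works, because both free cancellation and chunk deletion strictly decrease it. With these two points made explicit your proof is complete, and it is no less rigorous than the paper's own treatment of the same pull-back issue.
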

	
	\begin{proof}
		Let $w \in W_{A \cup \simpleset{x}}$.
		If the set
		$$
			\set{w' \in [vu]_{\sim_\rho}}{w \text{ is split as } w = uv \text{ for some } u,v \in W_{A \cup \simpleset{x}}}
		$$
		contains a word $w' \in W_A$ we are done.
		Otherwise, choose $w'$ from it of minimal length and such that $q$ is minimal where $w' = px^{\pm1}q$ and $p \in W_{A \cup \simpleset{x}}$, $q \in W_A$.
		Let $w = uv$ be the witnessing split for $w' \in [vu]_{\sim_\rho}$.
		In fact, this implies that $q$ is the empty word, for if otherwise we may adjust the split $w = uv$ to move the $q$ to the other side as this does not increase the length of $w'$.
		
		If $w' = x^{\pm n}$ for some $n > \omega$ we are done.
		Otherwise let $w' = x^{k}qx^{\pm1}$ where $q \in W_{A \cup \simpleset{x}}$ and $k \in \ZZ$ with $|k|$ maximal.
		We may assume that $k = 0$, for if otherwise we may adjust the split $w = uv$ to move the $x^k$ to the other side as this does not increase the length of $w'$ and does not introduce a non-empty $q$ as above.
		
		Finally, we may choose $l < \omega$, $u_i \in W_{A}$ and $k_i \in \ZZ \setminus \simpleset{0}$
		$$
			w' = u_l x^{k_l} u_{l-1} x^{k_{l-1}} \dots u_1 x^{k_1} u_0x^{k_0}.
		$$
		In fact, $u_i \notin \ker(\rho)$, for if otherwise $w'$ is equivalent to a shorter word, contradicting its minimality.
		Thus, $w'$ is nice.
	\end{proof}
	
	\begin{corollary}\label{cor_reduce_to_nice_words}
		Let $f \in \Sinf$ and assume for all nice words $w \in W_{\rho,x}$ we have $\fix(\rho[f](w))$ is finite.
		Then $\rho[f]$ induces a cofinitary representation.
	\end{corollary}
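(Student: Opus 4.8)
The plan is to reduce an arbitrary reduced word $w \in W_{A \cup \simpleset{x}}$ to a word we already control --- either a word in $W_A$, handled by the cofinitariness of $\rho$, or a nice word, handled by hypothesis --- using the cyclic invariance of the fixpoint count. Concretely, fix any $w \in W_{A \cup \simpleset{x}}$; it suffices to show that $\rho[f](w) = \id$ or $\fix(\rho[f](w))$ is finite, since this is exactly the assertion $\ran(\widehat{\rho[f]}) \subseteq \cofin(\Sinf) \cup \simpleset{\id}$ demanded by the definition of inducing a cofinitary representation.

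First I would apply Lemma~\ref{lem_split_into_nice_word} to split $w = uv$ (without reducing) so that $vu \sim_\rho w'$ for some $w'$ that either lies in $W_A$ or is nice. Next I would apply Proposition~\ref{prop_same_number_of_fixpoints} to the map $\rho[f]:A \cup \simpleset{x} \to \Sinf$ and the words $u,v \in W_{A \cup \simpleset{x}}$; this is legitimate because the proof of that proposition uses only that each generator is sent to a permutation, which holds here since $f \in \Sinf$. It yields the bijection $\pi = \rho[f](v)$ between $\fix(\rho[f](uv))$ and $\fix(\rho[f](vu))$. As $\pi$ is a global permutation of $\omega$ restricting to a bijection of the two fixpoint sets, they have equal cardinality, and moreover $\fix(\rho[f](uv)) = \omega$ iff $\fix(\rho[f](vu)) = \omega$; equivalently $\rho[f](w) = \rho[f](uv) = \id$ iff $\rho[f](vu) = \id$, and otherwise the two permutations have the same finite number of fixpoints.

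It remains to analyze $\rho[f](vu)$. Since $vu \sim_\rho w'$ and $\widehat{\rho[f]}$ restricts to $\rho$ on $W_A$, the extended homomorphism kills the normal closure of $\ker(\rho)$, so $\sim_\rho$-equivalent words share the same image; hence $\rho[f](vu) = \rho[f](w')$, and in particular the fixpoint sets agree. If $w' \in W_A$, then $\rho[f](w') = \rho(w')$, which by the standing assumption that $\rho$ induces a cofinitary representation is either $\id$ --- forcing $\rho[f](w) = \id$ by the previous paragraph --- or has finitely many fixpoints --- forcing $\fix(\rho[f](w))$ finite. If instead $w'$ is nice, the hypothesis of the corollary directly gives that $\fix(\rho[f](w'))$ is finite, so $\fix(\rho[f](w))$ is finite as well. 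In every case $\rho[f](w) \in \cofin(\Sinf) \cup \simpleset{\id}$, which completes the argument.

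I expect the main obstacle to be the bookkeeping around the equivalence $\sim_\rho$: one must verify that passing to a $\sim_\rho$-equivalent word genuinely preserves the image under the \emph{extended} map $\rho[f]$, not merely under $\rho$. This is exactly where it matters that $\rho[f]$ agrees with $\rho$ on $A$ and that $\sim_\rho$ imposes only relations among the $A$-generators, leaving $x$ free. A secondary point requiring care is that Proposition~\ref{prop_same_number_of_fixpoints} is invoked for the map $\rho[f]$, which is not yet known to be cofinitary, and for words containing $x$; both are fine since the proposition needs only $\rho[f](v) \in \Sinf$.
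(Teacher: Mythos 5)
Your proof is correct and follows essentially the same route as the paper: split $w=uv$ via Lemma~\ref{lem_split_into_nice_word}, use Proposition~\ref{prop_same_number_of_fixpoints} to transfer the fixpoint count to $vu$, and then to its $\sim_\rho$-equivalent representative $w'$, which is handled either by the cofinitariness of $\rho$ or by the hypothesis on nice words. If anything, you are more careful than the paper's own write-up, which silently glosses over the case $\rho[f](w')=\id$ (where the fixpoint set is all of $\omega$ rather than finite) and over why $\sim_\rho$-equivalence preserves the image under the extended map $\rho[f]$.
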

	
	\begin{proof}
		Let $w \in W_{A \cup \simpleset{x}}$.
		By the previous lemma write $w = uv$ where $vu$ is equivalent to a word in $W_A$ or equivalent to a nice word $w'$ with respect to $\rho$.
		Then $\rho[f](vu) = \rho[f](w')$, so that Proposition \ref{prop_same_number_of_fixpoints} implies
		$$
			|\fix(\rho[f](w))| = |\fix(\rho[f](uv))| = |\fix(\rho[f](vu))| = |\fix(\rho[f](w'))| < \omega,
		$$
		which proves that $\rho[f]$ is a cofinitary representation.
	\end{proof}
	
	Thus, to construct a maximal cofinitary group, we may restrict ourself to nice words.
	These have the advantage that they satisfy the following range and domain extension lemma:
	
	\begin{lemma}\label{LEM_ExtendDomain}
		Let $s \in \Sinffin$, $W_0 \subseteq W_{\rho,x}$ be a finite subset.
		Then we have
		\begin{enumerate}
			\item If $n \in \omega \setminus \dom(s)$ then for almost all $m \in \omega$ we have that $t := s \cup (n,m) \in \Sinffin$ and for every word $w \in W_0$
			$$
				\fix(\rho[s](w)) = \fix(\rho[t](w)).
			$$
			\item If $m \in \omega \setminus \ran(s)$ then for almost all $n \in \omega$ we have that $t := s \cup (n,m) \in \Sinffin$ and for every word $w \in W_0$
			$$
				\fix(\rho[s](w)) = \fix(\rho[t](w)).
			$$
		\end{enumerate}
	\end{lemma}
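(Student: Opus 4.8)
The plan is to reduce the statement to a counting argument about \emph{new} fixpoints and then to exhibit a finite set of forbidden values of $m$ outside of which no new fixpoint can appear. I only discuss part (1); part (2) is entirely symmetric, obtained by applying (1) to $s^{-1}$ and to the (again nice) words $\set{w^{-1}}{w \in W_0}$, using that $\fix(g) = \fix(g^{-1})$ for a partial injection $g$ and that inversion exchanges the roles of domain and range. First I would record the trivial inclusion: since $t = s \cup (n,m) \supseteq s$ as partial injections, every $\rho[s](w)$-computation that is defined is also $\rho[t](w)$-defined with the same value, whence $\fix(\rho[s](w)) \subseteq \fix(\rho[t](w))$ for every $w$. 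Requiring in addition $m \notin \ran(s)$ (finitely many exclusions) guarantees $t \in \Sinffin$. Thus the entire content is to choose $m$, outside a finite set, so that no $w \in W_0$ acquires a fixpoint $p$ whose $\rho[t](w)$-trajectory is defined and returns to $p$ while its $\rho[s](w)$-trajectory is undefined; such a trajectory must traverse the new pair $(n,m)$ at least once.

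Next I would analyze trajectories. Writing $w = u_l x^{k_l} \cdots u_1 x^{k_1} u_0 x^{k_0}$ and applying letters from right to left, a trajectory alternates applications of the fixed total permutations $\rho(u_i) \in \Sinf$ with applications of $t^{\pm 1}$. Call $m$ \emph{generic} if it avoids the finite set $\dom(s) \cup \ran(s) \cup \simpleset{n}$ and if, for each of the finitely many subwords $u \in W_A$ occurring inside the words of $W_0$, neither $\rho(u)(m)$ nor $\rho(u)^{-1}(m)$ lies in $\dom(s) \cup \ran(s) \cup \simpleset{n}$; this excludes only finitely many $m$ because each $\rho(u)$ is injective. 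The key observation is that once a generic trajectory first acquires the fresh value $m$ (immediately after a forward use $n \mapsto m$, or immediately before a backward use $m \mapsto n$), any attempt to apply $t^{\pm 1}$ again either leaves the domain—so the computation dies—or forces the new pair to be reused, which in turn requires reaching $n$ again (i.e.\ $\rho(u)(m) = n$, finitely many $m$ by injectivity) or reaching $m$ again (i.e.\ $m \in \fix(\rho(u))$). This last point is exactly where niceness is indispensable: since the word is reduced, consecutive $x$-blocks are separated by some $u_i \in W^0_{\rho,x} = W_A \setminus \ker(\rho)$, so $\rho(u_i) \neq \id$ and, as $\rho$ is cofinitary, $\fix(\rho(u_i))$ is finite.

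Excluding these finitely many further values of $m$, I may assume every new-fixpoint trajectory traverses $(n,m)$ \emph{exactly once}, and I would finish by splitting $w$ at that single use into a prefix and a suffix. If the use is forward, injectivity of the old maps forces the starting point to be the unique old preimage $p^\ast$ of $n$ determined by the prefix (independent of $m$), and closing the loop forces the suffix—which must consist only of total permutations, as any further $t^{\pm 1}$ would kill a generic computation—to satisfy $\rho(u_{\mathrm{suf}})(m) = p^\ast$, pinning $m$ to a single value. If the use is backward, symmetrically the prefix forces $p = \rho(u_{\mathrm{pre}})^{-1}(m)$ while the suffix applied to $n$ returns a fixed old value $c$ (independent of $m$), forcing $m = \rho(u_{\mathrm{pre}})(c)$, again a single value. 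Since $W_0$ is finite and each word offers only finitely many positions for the single use, the total set of bad $m$ is finite, and for every other $m$ one has $\fix(\rho[t](w)) = \fix(\rho[s](w))$ for all $w \in W_0$. The main obstacle throughout is the possibility of the new pair being traversed repeatedly; it is defeated precisely by combining niceness (which makes the intervening $\rho(u_i)$ nontrivial) with the cofinitariness of $\rho$ (which then makes their fixpoint sets finite).
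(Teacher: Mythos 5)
Your overall strategy is the same as the paper's: both proofs exclude a finite set of bad $m$ determined by $\dom(s)\cup\ran(s)\cup\{n\}$, its images and preimages under the $\rho(u_i)$, and the fixpoint sets $\fix(\rho(u_i))$ (finite precisely because niceness forces $u_i\in W_A\setminus\ker(\rho)$ and $\rho$ is cofinitary), and then trace how a $\rho[t](w)$-computation can traverse the new pair; your genericity conditions are the paper's thresholds (M1)--(M4), and your forward case matches the paper's Case 1. But your ``key observation,'' and with it the reduction to a single traversal of $(n,m)$, fails in the backward case. After a backward use $m\mapsto n$ the current value is $n$, and nothing forbids $n\in\ran(s)$: a further application of $t^{-1}$ then neither kills the computation nor reuses the new pair --- it just continues through $s$. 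Worse, the trajectory can later return to the value $n$ and reuse the pair \emph{forward} for reasons that have nothing to do with $m$. Concretely, take $s=\{(0,n)\}$ with $n\neq 0$, a cofinitary $\rho$ with $\rho(a)(n)=0$, and the nice word $w=a\,x^{2}\,a\,x^{-1}$: starting at $k=m$ the computation runs $m\mapsto n\mapsto 0\mapsto n\mapsto m\mapsto\rho(a)(m)$, is everywhere $t$-defined, and traverses $(n,m)$ twice, however large $m$ is. So excluding the finitely many $m$ with $\rho(u)(m)=n$ or $m\in\fix(\rho(u))$ does not give ``exactly once,'' and your final prefix/suffix split (with a suffix consisting only of total permutations) does not cover such trajectories. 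The lemma survives because every nice word in the general form ends, in application order, with some $u_l\in W_A\setminus\ker(\rho)$, and the value fed into $\rho(u_l)$ always lies in $\dom(t)\cup\ran(t)=\dom(s)\cup\ran(s)\cup\{n,m\}$, a set which your own genericity conditions force $\rho(u_l)$ to map away from $m$; this is exactly how the paper closes the backward case, without ever bounding the number of traversals.

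A smaller point: your reduction of (2) to (1) via $w^{-1}$ does not quite type-check. The inverse of a nice word ends in $u_l^{-1}$ rather than in a power of $x$, so it is not nice in the required form, and the relevant identity is $\rho[s^{-1}](w^{-1})=(\rho[s](w^{\perp}))^{-1}$ rather than $(\rho[s](w))^{-1}$. The paper instead passes to the word $w^{\perp}$ obtained by swapping $x$ and $x^{-1}$ while keeping the $u_i$, which stays nice and satisfies $\rho[t](w)=\rho[t^{-1}](w^{\perp})$ on the nose; some such adjustment is needed in your symmetry argument as well.
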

	
	\begin{proof}
		First, we show how $1.$ implies $2$.
		Let $m \in \omega \setminus \ran(s)$ and let $W_0^{\perp} := \set{w^\perp}{w \in W_0}$, where $w^\perp$ is constructed by replacing all occurrences of $x$ by $x^{-1}$ and vice versa.
		Note that $w$ is nice iff $w^\perp$ is nice and for any $t \in \Sinffin$ we have
		$$
			\rho[t](w) = \rho[t^{-1}](w^\perp).
		$$
		Furthermore, $m \notin \dom(s^{-1})$, so by $1.$ for almost all $n \in \omega$ we have that $t^{-1} := s^{-1} \cup (m,n) \in \Sinffin$ and for every word $w^\perp \in W_0^\perp$
		$$
			\fix(\rho[s^{-1}](w^\perp)) = \fix(\rho[t^{-1}](w^\perp)).
		$$
		But then for every such $n < \omega$ we have $t \in \Sinffin$ and for every word $w \in W_0$
		$$
			\fix(\rho[s](w)) = \fix(\rho[s^{-1}](w^\perp)) = \fix(\rho[t^{-1}](w^\perp)) = \fix(\rho[t](w)).
		$$
		Next, we have to prove $1$.
		It suffices to prove the statement for $W_0 = \simpleset{w}$, the general case then follows iteratively.
		Consider the following cases:
		
		\textbf{Case 1:} $w = x^n$ for some $n > 0$.
		We claim that every $m \in \omega \setminus (\dom(s) \cup \ran(s) \cup \simpleset{n})$ is suitable, for if $\rho[t](w)(k)\defined$ and $\rho[s](w)(k) \undefined$ for some $k \in \dom(t)$ we can choose $i > 0$ minimal such that $\rho[s](x^i)(k) \undefined$.
		Then $\rho[s](x^{i - 1})(k) = n$, so that $\rho[t](x^i)(k) = m \notin \dom(t)$.
		But $\rho[t](w)(k) \defined$, so $i = n$ and we get $\rho[t](w)(k) = m \neq k$.
		Thus, $k \notin \fix(\rho[t](w))$.
		
		\textbf{Case 2:} $w = x^{-n}$ for some $n > 0$.
		We claim that every $m \in \omega \setminus (\dom(s) \cup \ran(s) \cup \simpleset{n})$ is suitable, for if $\rho[t](w)(k)\defined$ and $\rho[s](w)(k) \undefined$ for some $k \in \ran(t)$ we can choose $i > 0$ minimal such that $\rho[s](x^{-i})(k) \undefined$.
		Then $\rho[s](x^{-i + 1})(k) = m$, so that $i = 1$ for if otherwise $m \in \ran(\rho[s](x^{-1}))$, so we get $m \in \dom(s)$, contradicting the choice of $m$.
		But then $k = m$ and we get $\rho[t](w)(m) \neq m$ as $m \notin \dom(t)$.
		Thus, $k \notin \fix(\rho[t](w))$.
		
		For the remaining case we may choose $l < \omega$ and $u_i \in W^0_{\rho,x}$, $k_i \in \ZZ \setminus \simpleset{0}$ for $i \leq l$ such that
		$$
			w = u_l x^{k_l} u_{l-1} x^{k_{l-1}} \dots u_1 x^{k_1} u_0x^{k_0}.
		$$
		Also, since $u_i \in W^0_{\rho,x}$ we may choose $M < \omega$ large enough such that for all $i \leq l$
		\begin{enumerate}[(M1)]
			\item $\dom(s) \cup \ran(s) \cup \simpleset{n} \subseteq M$.
			\item $\rho(u_i)[\dom(s) \cup \ran(s) \cup \simpleset{n}] \subseteq M$.
			\item $\rho(u_i)^{-1}[\dom(s) \cup \ran(s) \cup \simpleset{n}] \subseteq M$.
			\item $\fix(\rho(u_i)) \subseteq M$.
		\end{enumerate}
		We will show that every $m \geq M$ is suitable, so assume $\rho[t](w)(k)\defined$ and $\rho[s](w)(k) \undefined$ for some $k \in \dom(s) \cup \ran(s) \cup \simpleset{n,m}$.
		Choose $i \leq l$ minimal and then $j \leq |k_i|$ minimal such that
		$$
			\rho[s](x^{\sign(k_i)j}u_{i-1}x^{k_{i-1}}\dots u_1x^{k_1}u_0x^{k_0})(k)\undefined.
		$$
		Then $j > 0$ by minimality of $i$.
		We consider the following two cases:
		
		\textbf{Case 1:} $k_i > 0$.
		Then
		$$
			\rho[s](x^{j-1}u_{i-1}x^{k_{i-1}}\dots u_1x^{k_1}u_0x^{k_0})(k) = n,
		$$
		so that by definition of $t$
		$$
			\rho[t](x^{j}u_{i-1}x^{k_{i-1}}\dots u_1x^{k_1}u_0x^{k_0})(k) = m.
		$$
		By (M1) $j < k_i$ contradicts $\rho[t](w)(k)\defined$, so $j = k_i$ and we get
		$$
			\rho[t](x^{k_i}u_{i-1}x^{k_{i-1}}\dots u_1x^{k_1}u_0x^{k_0})(k) = m
		$$
		By (M3) and (M4) we get
		$$
			\rho[t](u_ix^{k_i}u_{i-1}x^{k_{i-1}}\dots u_1x^{k_1}u_0x^{k_0})(k) \notin \dom(s) \cup \ran(s) \cup \simpleset{n,m},
		$$
		so $i < l$ contradicts $\rho[t](w)(k) \defined$.
		Thus $i = l$ and
		$$
			\rho[t](w)(k) \notin \dom(s) \cup \ran(s) \cup \simpleset{n,m},
		$$
		which proves that $k \notin \fix(\rho[t](w))$.
		
		\textbf{Case 2:} $k_i < 0$.
		Then
		$$
			\rho[s](x^{-j+1}u_{i-1}x^{k_{i-1}}\dots u_1x^{k_1}u_0x^{k_0})(k) = m,
		$$
		so that by definition of $t$
		$$
			\rho[t](x^{-j}u_{i-1}x^{k_{i-1}}\dots u_1x^{k_1}u_0x^{k_0})(k) = n.
		$$
		By (M1) we have $j = 1$ and we get
		$$
			\rho[s](u_{i-1}x^{k_{i-1}}\dots u_1x^{k_1}u_0x^{k_0})(k) = m.
		$$
		If $i > 0$ by minimality of $i$ we would have
		$$
			\rho[s](x^{k_{i-1}}\dots u_1x^{k_1}u_0x^{k_0})(k) \in \dom(s) \cup \ran(s) \cup \simpleset{n},
		$$
		which contradicts (M2).
		Thus, $i = 0$, i.e.\ $k = \rho[s](\epsilon)(k) = m$.
		Further, we have
		$$
			\rho[t](x^{k_l}u_{l-1}x^{k_{l-1}}\dots u_1x^{k_1}u_0x^{k_0})(m) \in \dom(s) \cup \ran(s) \cup \simpleset{n,m}.
		$$
		But then (M2) and (M4) imply
		$$
			\rho[t](w)(m) \neq m
		$$
		which proves that $k \notin \fix(\rho[t](w))$.
	\end{proof}

	\subsection{Elimination of intruders for maximal cofinitary groups} \label{SECSUB_ELIMINATING_MCG}
	
	Finally, we will prove $\eoi(\aG)$, so that Theorem \ref{THM_SacksUniversal} will give us the following new result:
	
	\begin{corollary}\label{COR_MCG}
		Under $\sf{CH}$ there is a universally Sacks-indestructible maximal cofinitary group.
	\end{corollary}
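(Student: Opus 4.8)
The plan is to deduce Corollary~\ref{COR_MCG} from Theorem~\ref{THM_SacksUniversal}: since maximal cofinitary groups have already been shown to form an arithmetical type, under $\sf{CH}$ it suffices to verify that this type satisfies elimination of intruders, i.e.\ $\eoi(\aG)$. Concretely, I would prove the following instance of Definition~\ref{DEF_EliminatingIntrudersLemma}: given a countable cofinitary representation $\rho$ (whose generators form the family $\F$), a condition $p \in \SS^{\aleph_0}$, and a name $\dot{g}$ with $p \forces \dot{g} \notin V$ and $p \forces \rho[\dot{g}]$ induces a cofinitary representation, produce $q \extends p$ and $f \in \Sinf$ such that $\rho[f]$ induces a cofinitary representation while $q \forces \rho[f][\dot{g}]$ does not (where $\rho[f][\dot{g}]$ adjoins both $f$ and $\dot{g}$ as new generators). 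The witnessing non-cofinitary word will be the length-two word $x^{-1}y$, evaluating to $f^{-1} \circ \dot{g}$: it suffices to arrange that $q$ forces $f$ and $\dot{g}$ to agree on infinitely many points, so that $\fix(\rho[f][\dot{g}](x^{-1}y))$ is forced infinite while $\rho[f]$ stays cofinitary. Since $p \forces \dot{g} \notin V$ and $f \in V$, the permutation $f^{-1} \circ \dot{g}$ is nontrivial (i.e.\ $f^{-1} \circ \dot{g} \neq \id$), giving the failure of cofinitariness.

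The construction mirrors the eventually different case (Lemma~\ref{LEM_EILforED}) but replaces the bookkeeping of eventual difference with the nice-word machinery. First I would enumerate the countably many nice words $W_{\rho,x} = \set{w_n}{n < \omega}$. Then I would build a fusion sequence $\seq{q_n}{n < \omega}$ below $p$ together with an increasing sequence of finite partial injections $\seq{s_n}{n < \omega} \subseteq \Sinffin$ whose union is the desired $f$. At stage $n$, using Lemma~\ref{LEM_ExtendDenseOpen}, I process each $\sigma$ suitable for $q_n$ and $n$ in turn: below $q_n \restr \sigma$ I locate a fresh domain point $k_\sigma$ and extend to decide $\dot{g}(k_\sigma) = m_\sigma$, then set $s_{n+1}(k_\sigma) := m_\sigma$, so that $q_{n+1} \restr \sigma$ forces $f$ and $\dot{g}$ to agree at $k_\sigma$; as the $q_{n+1} \restr \sigma$ form a maximal antichain below $q_{n+1}$, this gives that $q_{n+1}$ forces agreement at one of the points added at stage $n$. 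Interleaving standard filling steps (forcing $n$ into both domain and range via the two clauses of Lemma~\ref{LEM_ExtendDomain}) guarantees $f$ is a total permutation, and since the agreement points are always fresh and increasing, the fusion $q$ forces infinitely many agreements. Finally, Corollary~\ref{cor_reduce_to_nice_words} reduces cofinitariness of $\rho[f]$ to control of the nice words: once clause~(1) of Lemma~\ref{LEM_ExtendDomain} is invoked from stage $n$ onward with $W_0 \supseteq \set{w_j}{j \le n}$, every later one-point extension preserves $\fix(\rho[\,\cdot\,](w_j))$ for $j \le n$, so each $\fix(\rho[f](w_j))$ equals a fixed finite set.

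The main obstacle is the tension between the two jobs each new point must perform: to force agreement I am compelled to set $s_{n+1}(k_\sigma)$ equal to the value $m_\sigma = \dot{g}(k_\sigma)$ dictated by the name, yet Lemma~\ref{LEM_ExtendDomain} only permits adjoining the pair $(k_\sigma, m_\sigma)$ without creating new fixpoints of the controlled nice words when $m_\sigma$ avoids a finite bad set $B_\sigma$ determined by $s_n$ and $\set{w_j}{j \le n}$. The resolution uses that $\dot{g}$ is forced to be a genuine permutation: for any bound $M$ the set of conditions deciding $\dot{g}(k) = m$ for some $k, m \ge M$ is dense below $q_n \restr \sigma$ (by injectivity, among $M+1$ large inputs at most $M$ can be sent below $M$), so I may take $k_\sigma$ fresh and simultaneously $m_\sigma \ge M := \max(B_\sigma) + 1$, keeping the extension safe. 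Processing the suitable $\sigma$ sequentially — enlarging $s$, and hence the relevant bounds, after each — keeps the chosen points and values pairwise distinct, so the accumulated $s_{n+1}$ remains an injection; and because the agreement points $k_\sigma$ are always fresh and increasing, each branch through $q$ meets a new agreement point at every level, yielding infinitely many $n$ with $f(n) = \dot{g}(n)$, as required.
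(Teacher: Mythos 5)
Your overall architecture matches the paper's: reduce to Theorem~\ref{THM_SacksUniversal} by proving $\eoi(\aG)$, force $f$ and $\dot{g}$ to agree infinitely often so that the word $f^{-1}\dot{g}$ witnesses failure of cofinitariness (using $p \forces \dot{g}\notin V$ to see it is not the identity), organise the fusion around nice words and Corollary~\ref{cor_reduce_to_nice_words}, and use injectivity of $\dot{g}$ to find a decided pair with both coordinates large. However, there is a genuine gap exactly at the step you flag as the main obstacle. The finite bad set of second coordinates $m$ for which adjoining $(k,m)$ to $s$ creates new fixpoints of a controlled nice word is \emph{not} determined by $s$ and the finite word set alone: it depends on $k$ as well (in the proof of Lemma~\ref{LEM_ExtendDomain} the threshold $M$ must dominate $\rho(u_i)^{\pm1}$ applied to the new domain point). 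Concretely, for the nice word $w = uxux$ with $u \in W^0_{\rho,x}$, adjoining the pair $(k,\rho(u)^{-1}(k))$ always creates the new fixpoint $k$, and both coordinates of this pair can be arbitrarily large. Since your second coordinate is dictated by the name, $m_\sigma = \dot{g}(k_\sigma)$, you cannot re-choose it to dodge this value, and setting $M := \max(B_\sigma)+1$ is circular because $B_\sigma$ depends on $k_\sigma$. So ``both coordinates $\geq M$'' does not keep the extension safe, and your construction as written may force $\fix(\rho[f](uxux))$ to be infinite, destroying cofinitariness of $\rho[f]$.

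What is missing is the paper's preliminary treatment of the words $W^1_{\rho,x}$ with a single occurrence of $x$. One first shows that $p$ forces $\fix(\rho[\dot{g}](w))$ to be finite for every $w \in W^1_{\rho,x}$ --- here $p \forces \dot{g}\notin V$ is what rules out $\rho[\dot{g}](w) = \id$ --- and then runs a \emph{first} fusion producing a single condition $q_0$ and uniform bounds $K_n$ with $q_0 \forces \fix(\rho[\dot{g}](w_n)) \subseteq K_n$ for $w_n \in W^1_{\rho,x}$. In the main fusion these bounds are precisely what guarantee that the forced value $m_\sigma = \dot{g}(k_\sigma)$ avoids the dangerous values: if $\dot{g}(k_\sigma) = \rho(u)^{-1}(k_\sigma)$ then $k_\sigma \in \fix(\rho[\dot{g}](ux))$, which is impossible once $k_\sigma \geq K_{m'}$ for the index $m'$ of the subword $ux \in W^1_{\rho,x}$. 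The same mechanism handles $x^{\pm2}$ and all patterns $\dots x^{\pm1}ux^{\pm1}\dots$ with equal signs, while the opposite-sign patterns are handled by largeness alone as in Lemma~\ref{LEM_ExtendDomain}. Note also that the relevant finite set of coincidences is not known to the ground model from a single condition deciding one value of $\dot{g}$, which is why the uniform bounds must be secured by a separate fusion \emph{before} the agreement points are chosen. Without this two-fusion structure the verification of $\eoi(\aG)$, and hence the corollary, does not go through.
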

	
	The proof will follow a similar structure as the proof of Lemma \ref{LEM_EILforED} for med families, while using the case distinctive argumentation as in the extending domain/range Lemma \ref{LEM_ExtendDomain}.
	We will need to handle $W^1_{\rho,x}$, i.e.\ nice words with exactly one occurrence of $x$, differently, so let us denote $W^{>1}_{\rho, x} := \bigcup_{n > 1} W^n_{\rho,x}$.

	\begin{lemma}\label{LEM_MCG}
		Let $A$ be countable and assume $\rho:A \to \Sinf$ induces a cofinitary representation.
		Let $p \in \SS^{\aleph_0}$ and $\dot{g}$ be a name for an element of $\Sinf$ such that
		$$
			p \forces \dot{g} \notin V \text{ and } \rho[\dot{g}] \text{ induces a cofinitary representation}.
		$$
		Then there is $q \extends p$ and $f \in \Sinf$ such that $\rho[f]$ induces a cofinitary representation and
		\[
			q \forces f \text{ and } \dot{g} \text{ are not eventually different}.
		\]
	\end{lemma}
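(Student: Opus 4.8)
The plan is to construct the witness as a union $f=\bigcup_{n<\omega}s_n$ of an increasing chain of finite partial injections $s_n\in\Sinffin$, built along a fusion sequence $\seq{q_n}{n<\omega}$ below $p$. In the end I want two things: that $\rho[f]$ induces a cofinitary representation, so that $\ran(\rho[f])$ is again of type $\aG$; and that the fusion $q$ forces $f$ and $\dot g$ to agree at infinitely many arguments. The latter kills $\dot g$ as an intruder, since $\exists^\infty k\,(f(k)=\dot g(k))$ forces $\dot g^{-1}f$ to have infinitely many fixpoints, so that $\rho[f][\dot g]$ is not cofinitary. By Corollary~\ref{cor_reduce_to_nice_words} it is enough, for cofinitariness of $\rho[f]$, to keep $\fix(\rho[f](w))$ finite for every nice word $w\in W_{\rho,x}$; so fix an enumeration $\seq{w_n}{n<\omega}$ of $W_{\rho,x}$ in which every nice word occurs, in particular every one-occurrence word $ux^{\pm1}$ with $u\in W^0_{\rho,x}$.

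First I would run a preliminary fusion to read bounds off the intruder. Since $p\forces\rho[\dot g]$ is cofinitary, for each nice word $w$ the set of conditions forcing $\fix(\rho[\dot g](w))\subseteq k$ for some concrete $k<\omega$ is dense open, so by Lemma~\ref{LEM_ExtendDenseOpen} along a fusion I obtain $q_0\extends p$ and ground-model numbers $k_w$ with $q_0\forces\fix(\rho[\dot g](w))\subseteq k_w$ for all $w$. Only the bounds for the words $ux^{\pm1}$ will be used: below $q_0$, whenever $k>k_{ux}$ we have $q_0\forces\dot g(k)\neq\rho(u)^{-1}(k)$, i.e.\ large arguments are never $W^1$-fixpoints of $\rho[\dot g]$.

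The main fusion is carried out below $q_0$ with $s_0=\emptyset$. At stage $n$ let $\seq{\sigma_i}{i<N}$ enumerate the suitable functions for $q_n$ and $n$. For each $i<N$ I force one agreement: because $\dot g$ is forced to be a bijection, the set of $r\extends q_n\restr\sigma_i$ deciding $\dot g(k_i)=m_i$ for a pair $(k_i,m_i)$ with both coordinates as large as I wish is dense, so Lemma~\ref{LEM_ExtendDenseOpen} yields $q_{n+1}\extends_n q_n$ with $q_{n+1}\restr\sigma_i$ deciding such a pair. I take the $k_i$ pairwise distinct and beyond the finitely many relevant bounds $k_{ux}$, the $m_i$ pairwise distinct and disjoint from $\ran(s_n)$, and all of them large in the sense made precise below. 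I then use Lemma~\ref{LEM_ExtendDomain} to adjoin finitely many further pairs pushing the least missing natural number into the domain and into the range, and set
\[
s_{n+1}:=s_n\cup\set{(k_i,m_i)}{i<N}\cup(\text{extension pairs}).
\]
Since $q_{n+1}\restr\sigma_i\forces f(k_i)=s_{n+1}(k_i)=\dot g(k_i)$ and the $q_{n+1}\restr\sigma_i$ form a maximal antichain below $q_{n+1}$, every stage forces, on all of $[q_{n+1}]$, an agreement of $f$ and $\dot g$ at some $k_i\to\infty$; hence the fusion $q$ forces infinitely many agreements, and the extension pairs make $f\in\Sinf$.

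The crux — and the step I expect to be hardest — is to guarantee that no newly adjoined pair creates a fixpoint for the words $w_0,\dots,w_n$, so that $\fix(\rho[f](w))$ is frozen from the stage at which $w$ is enumerated and stays finite. For the extension pairs this is exactly Lemma~\ref{LEM_ExtendDomain}. For the agreement pairs the range value $m_i=\dot g(k_i)$ is forced rather than free, so Lemma~\ref{LEM_ExtendDomain} does not apply verbatim and its case analysis must be redone; this is where the dichotomy $W^1_{\rho,x}$ versus $W^{>1}_{\rho,x}$ is needed. For $w=ux^{\pm1}\in W^1_{\rho,x}$ the single $x$-letter is traversed once, so at an agreement point $\rho[s_{n+1}](w)(k_i)=\rho[\dot g](w)(k_i)$, which by $k_i>k_{ux}$ is not a fixpoint; thus agreement pairs never create $W^1$-fixpoints (and extension pairs avoid the one bad value, as in Lemma~\ref{LEM_ExtendDomain}). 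For $w\in W^{>1}_{\rho,x}$ I invoke injectivity of $\dot g$ a second time, additionally demanding that $k_i$, $m_i$ and all the values $\rho(u)^{\pm1}(k_i),\rho(u)^{\pm1}(m_i)$ for the finitely many $u$ occurring in $w_0,\dots,w_n$ exceed the finite threshold determined by $s_n$ and these $\rho(u)$ — a cofinite restriction on $k_i$, hence compatible with forcing the agreement. With the pair this large, any computation of $\rho[s_{n+1}](w)$ that uses $(k_i,m_i)$ escapes above the threshold at the next application of some $\rho(u)$ and can only come back by hitting the coincidence $m_i=\rho(u)^{-1}(k_i)$; but that says $k_i\in\fix(\rho[\dot g](ux))$, excluded by $k_i>k_{ux}$. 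Running the two-case ``escape'' argument of Lemma~\ref{LEM_ExtendDomain} then gives $\fix(\rho[s_{n+1}](w))=\fix(\rho[s_n](w))$ for every $w\in\set{w_m}{m\le n}$. Hence $\fix(\rho[f](w))$ is finite for all nice $w$, so by Corollary~\ref{cor_reduce_to_nice_words} $\rho[f]$ is cofinitary; together with the infinitely many forced agreements this gives the lemma. The real work is precisely this transfer of the escape argument from freely chosen values to the forced values $\dot g(k_i)$, for which the $W^1/W^{>1}$ split and the two uses of injectivity of $\dot g$ are exactly the tools needed.
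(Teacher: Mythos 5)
Your overall strategy is the same as the paper's: a preliminary fusion extracting ground-model bounds on $\fix(\rho[\dot g](w))$ for the words $w\in W^1_{\rho,x}$, a main fusion that below each $q_{n+1}\restr\sigma_i$ forces one agreement $\dot g(k_i)=m_i$ at a large pair and adjoins that pair to the finite approximation, extension pairs via Lemma~\ref{LEM_ExtendDomain} to make $f$ total and surjective, and a verification split into the $W^1_{\rho,x}$ case (where the new value is the forced value of $\rho[\dot g](ux^{\pm1})$ and hence not a fixpoint beyond the bound) and the $W^{>1}_{\rho,x}$ case (the escape argument of Lemma~\ref{LEM_ExtendDomain}, with the one new possible ``comeback'' coincidence excluded by the $W^1$ bound for the subword $ux^{\pm1}$). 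This is essentially the paper's proof.

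There is, however, one genuine gap, and it sits exactly where the hypothesis $p\forces\dot g\notin V$ must enter --- a hypothesis your proposal never uses. You claim that for \emph{each} nice word $w$ the set of conditions forcing $\fix(\rho[\dot g](w))\subseteq k$ for some concrete $k$ is dense open ``since $p\forces\rho[\dot g]$ is cofinitary.'' This is false as stated: a cofinitary representation is only required to land in $\cofin(\Sinf)\cup\simpleset{\id}$, so $\rho[\dot g](w)=\id$ is permitted, in which case $\fix(\rho[\dot g](w))=\omega$ and no bound exists. For instance nothing in the hypotheses prevents $\dot g$ from being forced to be an involution, and then for $w=x^2\in W^{>1}_{\rho,x}$ the density fails outright, so a preliminary fusion over all nice words gets stuck. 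You do remark that only the bounds for words $ux^{\pm1}$ are used, which localizes the damage, but even for those words the justification is missing: you must argue that no $q\extends p$ forces $\rho[\dot g](ux^{\pm1})=\id$, and the argument is precisely that this would force $\dot g=\rho(u)^{-1}$ or $\dot g=\rho(u)$, hence $\dot g\in V$, contradicting the hypothesis. Once this is supplied (and the preliminary fusion restricted to $W^1_{\rho,x}$), the rest of your outline goes through; one further point to make precise is that at a fixed stage the largeness thresholds must be recomputed sequentially over the suitable functions $\sigma_i$, so that the escape argument for the pair $(k_i,m_i)$ also avoids the pairs $(k_j,m_j)$ with $j<i$ adjoined at the same stage, not merely $s_n$.
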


	\begin{proof}
		First, we prove that for all $w \in W^1_{\rho,x}$
		$$
			p \forces \fix(\rho[\dot{g}](w)) \text{ is finite}.
		$$
		Otherwise, choose $w \in W^1_{\rho, x}$ and $q \leq p$ such that
		$$
			q \forces \fix(\rho[\dot{g}](w)) \text{ is infinite}.
		$$
		Then by assumption on $\dot{g}$ we get
		$$
			q \forces \rho[\dot{g}](w) = \id.
		$$
		Write $w := ux^{\pm1}$, where $u\in W_A$.
		Then
		$$
			q \forces \dot{g}^{\pm1} = \rho[{\dot{g}]}(x^{\pm1}) = \rho(u)^{-1},
		$$
		so depending on the case we get
		$$
			q \forces \dot{g} = \rho(u)^{-1} \quad \text{ or } \quad q \forces \dot{g} = \rho(u).
		$$
		Either way we obtain $q \forces ``\dot{g} \in V"$, a contradiction.
		Let $\seq{w_n}{n < \omega}$ enumerate all nice words, so that $w_m$ is a subword of $w_n$ implies that $m \leq n$.
		We define a fusion sequence $\seq{p_n}{n < \omega}$ below $q_0$, a sequence $\seq{K_n < \omega}{n < \omega}$ and an increasing sequence $\seq{f_n \in \Sinffin}{n < \omega}$ with the following properties:
		\begin{enumerate}
			\item $n \in \ran(f_n) \cap \dom(f_n)$.
			\item If $w_n \in W^1_{\rho,x}$ then $p_{n + 1} \forces \fix(\rho[\dot{g}](w_n)) \subseteq K_n$
			\item For all $m \leq n$ we have $\fix(\rho[f_n](w_m)) \subseteq K_m$.
			\item $p_{n + 1} \forces \exists l \in (\dom(f_n) \setminus \bigcup_{m < n} \dom(f_m)) \ f_n(l) = \dot{g}(l)$.
		\end{enumerate}
		To see that this proves the theorem, let $q$ be the fusion of $\seq{p_n}{n < \omega}$ and define $f := \bigcup_{n < \omega} f_n$.
		By $(1)$ and since $\seq{f_n}{n < \omega}$ is an increasing sequence of partial injections we have that $f \in \Sinf$.
		By $(3)$ we have that $\rho[f]$ induces a cofinitary representation, since for every $m < \omega$ we have that $\fix(\rho[f](w_m)) \subseteq K_m$.
		Finally, $(4)$ implies that
		$$
			q \forces f \text{ and } \dot{g} \text{ are not eventually different}.
		$$
		So set $p_0 := p$ and assume that $q_n$ is defined - we then have to construct $p_{n + 1}, K_n$ and $f_n$.
		If $n \notin \bigcup_{m < n}\dom(f_m)$ or $n \notin \bigcup_{m < n} \ran(f_m)$ use Lemma \ref{LEM_ExtendDomain} to extend $f_n$ to $h_0 \in \Sinffin$ with $n \in \dom(h_0) \cap \ran(h_0)$ while preserving $(3)$.
		In case that $w_n \in W^1_{\rho,x}$ by the previous observation the set
		$$
			D_n := \set{q \leq p_n}{\exists K < \omega \ q \forces \fix(\rho[\dot{g}](w_n)) \subseteq K}
		$$
		is open dense below $p_n$.
		By Lemma \ref{LEM_ExtendDenseOpen} take $q_0 \extends_n p_n$ such that $q_0 \restr \sigma \in D_n$ for all $\sigma$ suitable for $p_n$ and $n$.
		Thus, there is a $K < \omega$ such that
		$$
			q_0 \forces \fix(\rho[\dot{g}](w_n)) \subseteq K.
		$$
		
		Next, enumerate all suitable functions $\sigma$ for $q_0$ and $n$ by $\seq{\sigma_i}{i < N}$.
		Inductively, we will define an $\leq_{n}$-decreasing sequence $\seq{q_i}{i \leq N}$ and an increasing sequence $\seq{h_i \in \Sinffin}{i \leq N}$ with
		\begin{enumerate}[(a)]
			\item For all $m < n$ we have $\fix(\rho[h_{i + 1}](w_m)) \subseteq K_m$.
			\item $q_{i + 1} \restr \sigma_i \forces \exists l \in (\dom(h_{i + 1}) \setminus \dom(h_i)) \ f_n(l) = \dot{g}(l)$.
		\end{enumerate}
		Assuming we are successful with this, we may set $f_{n} := h_{N}$ and choose $K' < \omega$ such that $\fix(\rho[f_{n}](w_n)) \subseteq K'$ and define
		$$
			K_n := \max (K, K')
		$$
		Then, we took care of $(1)$ at the beginning of the construction and $(2)$ follows from the definition of~$K$.
		Furthermore, $(3)$ follows from $(a)$ for $m < n$ and by definition of $K'$ for $m = n$.
		Finally, $(4)$ follows directly from $(b)$.
		
		For the construction, let $i < N$ and assume $q_i$ and $h_i$ are defined.
		Choose $M < \omega$ large enough such that
		\begin{enumerate}[(M1)]
			\item $\ran(h_i) \cup \dom(h_i) \subseteq M$,
		\end{enumerate}
		and for all $m < n$ if we can choose $l_m < \omega$ and $u_{i,m} \in W^0_{\rho,x}$, $k_{i,m} \in \ZZ \setminus \simpleset{0}$ for $i \leq l$ such that
		$$
		w = u_l x^{k_l} u_{l-1} x^{k_{l-1}} \dots u_1 x^{k_1} u_0x^{k_0},
		$$
		then we have for all $i \leq l$ that
		\begin{enumerate}[(M1)]\setcounter{enumi}{1}
			\item $\rho(u_{i,m})[\dom(h_i) \cup \ran(h_i)] \subseteq M$,
			\item $\rho(u_{i,m})^{-1}[\dom(h_i) \cup \ran(h_i)] \subseteq M$.
			\item $\fix(\rho(u_{i,m})) \subseteq M$
		\end{enumerate}
		Finally, we also require
		\begin{enumerate}[(M1)]\setcounter{enumi}{4}
			\item for all $m < n$ we have $K_m \leq M$.
		\end{enumerate}
		Choose $r \leq q_i \restr \sigma_i$ which decides $M + 1$ values of $\dot{g}$ above $M$.
		Since, $r \forces ``\dot{g} \text{ is injective}"$ there are $n_0, m_0 \geq M$ such that $r \forces ``\dot{g}(n_0) = m_0"$.
		Set $h_{i + 1} := h_i \cup \simpleseq{n_0,m_0}$ and $q_{i + 1} \leq_{n} q_i$ so that $q_{i + 1} \restr \sigma_i = r_i$ by
		$$
			q_{i + 1}(\alpha) := 
			\begin{cases}
				r_i(\alpha) \cup \bigcup\set{q_i(\alpha)_s}{s \in \spl_n(q_i(\alpha)) \concat 2 \text{ and } s \neq \sigma_i(\alpha)} & \text{if } \alpha < n\\
				r_i(\alpha) & \text{otherwise}
			\end{cases}
		$$
		Clearly, $h_{i + 1} \in \Sinf$ by (M1) and $q_{i + 1}$ satisfies property (b) since
		$$
			q_{i + 1} \restr \sigma_i  \forces h_{i + 1}(n_0) = m_0 = \dot{g}(n_0)
		$$
		It remains to show that also property (a) is satisfied, so let $m < n$.
		First, we consider the case $w_m \in W^1_{\rho,x}$.
		But in this case we have $w_m = ux^{\pm 1}$ for $u \in W^0_{\rho,x} \cup \simpleset{\epsilon}$, so that for every $l \in \dom(h_i) \cup \ran(h_i)$ we have 
		$$
			\rho[h_{i + 1}](w_m)(l) = \rho[h_i](w_m)(l),
		$$
		and for $l \in \simpleset{n_0, m_0}$ we may apply $(2)$ inductively to obtain
		$$
			q_i \restr \sigma_i \forces \rho[h_{i + 1}](w_m)(l) = \rho[\dot{g}](w_m)(l) \text{ and } l \notin \fix(\rho[\dot{g}](w_m)),
		$$
		since $n_0, m_0 \geq K_m$.
		Thus, $\fix(\rho[h_{i + 1}](w_m)) = \fix(\rho[h_i](w_m)) \subseteq K_m$ inductively by (a).
		
		Next, we consider the case $w_m \in W^{>1}_{\rho,x}$.
		Again, for all $l \in \dom(h_i) \cup \ran(h_i)$ we have
		$$
			\rho[h_{i + 1}](w_m)(l) = \rho[h_i](w_m)(l)
		$$
		by properties (M1), (M2) and (M3).
		Thus, it remains to consider the cases $l \in \simpleset{n_0, m_0}$.
		We show that $\rho[h_{i + 1}](w_m)(l) \undefined$ which finishes the proof.
		If $l = n_0$ we may write $w_m = vx^{\pm1} u x$ for $v \in W_{A \cup \simpleset{x}}$ and $u \in W^0_{\rho,x} \cup \simpleset{\epsilon}$.
		By (M2) or (M3) we have that
		$$
			\rho[h_{i + 1}](ux)(n_0) \notin \dom(h_{i}) \cup \ran(h_i).
		$$
		Further, if $w_m = v x^{-1} u x$ we have $u \in W^0_{\rho,x}$, so that by (M4) $\rho(u)(m_0) \neq m_0$.
		Hence,
		$$
			\rho[h_{i + 1}](ux)(n_0) \neq m_0.
		$$
		Otherwise, $w_m = vx u x$ and $ux \in W^1_{\rho,x}$ is a subword of $w_m$, so choose $m' < m$ with $w_{m'} = ux$, so by (M5) and (2) we get
		$$
			\rho[h_{i + 1}](ux)(n_0) \neq n_0.
		$$
		Thus, in both cases $\rho[h_{i + 1}](x^{\pm 1} u x) \undefined$.
		
		Finally, for $l = m_0$ we may write $w_m = vx^{\pm1} u x^{-1}$ for $v \in W_{A \cup \simpleset{x}}$ and $u \in W^0_{\rho,x} \cup \simpleset{\epsilon}$.
		By (M2) or (M3) we have that
		$$
			\rho[h_{i + 1}](ux^{-1})(m_0) \notin \dom(h_{i}) \cup \ran(h_i).
		$$
		Further, if $w_m = v x u x^{-1}$ we have $u \in W^0_{\rho,x}$, so that by (M4) $\rho(u)(n_0) \neq n_0$.
		Hence,
		$$
			\rho[h_{i + 1}](ux^{-1})(m_0) \neq n_0.
		$$
		Otherwise, $w_m = vx u x^{-1}$ and $ux^{-1} \in W^1_{\rho,x}$ is a subword of $w_m$, so again choose $m' < m$ with $w_{m'} = ux^{-1}$, so by (M5) and (2) we get
		$$
			\rho[h_{i + 1}](ux^{-1})(m_0) \neq m_0.
		$$
		Thus, in both cases $\rho[h_{i + 1}](x^{\pm 1} u x^{-1}) \undefined$.
	\end{proof}
	
	\subsection{Independent families and ultrafilters}\label{SECSUB_UANDI}
	
	In this section we want to consider independent families and ultrafilters.
	We show that both of these types of families are arithmetically definable, so that we may apply Theorem~\ref{THM_MainTheorem}.
	Further, we will prove $\eoi(\u)$, so by Theorem~\ref{THM_SacksUniversal} we may easily construct a universally Sacks-indestructible ultrafilter under $\sf{CH}$ without referring to some kind of selectivity.
	At this time, we dot not know if the same is possible for independent families.
	Remember the following two definitions:
	
	\begin{definition}
		Let $\A$ be a subset of $\infsubset{\omega}$.
		Denote with $\text{FF}(\A)$ the set of all finite partial functions $f:\A \to 2$.
		Given $f \in \text{FF}(A)$ we define
		$$
			\A^f := (\bigcap_{A \in f^{-1}[0]}A) \cap (\bigcap_{A \in f^{-1}[1]} A^c).
		$$
		The family $\A$ is independent iff for all $f \in \text{FF}(A)$ we have that $\A^f$ is infinite.
		$\A$ is called maximal iff it is maximal {w.r.t.} inclusion.
		The corresponding cardinal characteristic is the independence number $\i$:
		\[
			\i := \min{\set{\left|\A\right|}{\A \text{ is a maximal independent family}}}.
		\]
	\end{definition}
	
	\begin{proposition}
		Maximal independent family are an arithmetical type.
	\end{proposition}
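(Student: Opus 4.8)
The plan is to follow the template of the coded mad family case, but to code each subset of $\omega$ by its \emph{characteristic function} in $\cantorspace \subseteq \bairespace$ rather than by an increasing enumeration. This is the natural choice here because independence is phrased through Boolean combinations, and with characteristic-function coding complementation becomes trivial: the complement of the set coded by $w$ is obtained by swapping the values $0$ and $1$. Concretely, for a code $w \in \cantorspace$ and a bit $\epsilon \in 2$, membership of $m$ in the $\epsilon$-th side of the coded set is just ``$w(m) = 1 - \epsilon$'', so each assertion ``a Boolean combination is infinite'' becomes a $\Pi^0_2$-statement of the shape $\forall N \exists m\,(m > N \wedge \dots)$. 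As only integer quantifiers are introduced, every formula below is arithmetical.

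For the family side I would set, for each $n < \omega$,
\[
\psi_n(w_0, \dots, w_n) :\equiv \bigwedge_{i \leq n}(w_i \in \cantorspace) \ \wedge \ \bigwedge_{\epsilon \in {}^{n+1}2} \forall N \exists m \Big( m > N \wedge \bigwedge_{i = 0}^{n} w_i(m) = 1 - \epsilon(i)\Big),
\]
expressing ``$w_0, \dots, w_n$ code subsets of $\omega$ all of whose Boolean combinations are infinite''. Since $n$ is fixed, the conjunction over the $2^{n+1}$ patterns $\epsilon$ is finite, so $\psi_n$ is arithmetical; and since that conjunction is symmetric under permuting the $w_i$ (together with the induced permutation of $\epsilon$), its truth value does not depend on how the unordered set $\simpleset{f_0, \dots, f_n}$ is enumerated. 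Running $\dom(\t)$ over all $n$ and all $(n+1)$-subsets then says exactly that every finite Boolean combination of members of $\F$ is infinite, i.e.\ that $\F$ codes an independent family; the coding clause $w_i \in \cantorspace$ is in fact already forced by $\psi_0$ on singletons.

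For the intruder side I would take $\chi_0(v)$ to express ``$v \in \cantorspace$ and $v$ codes an infinite, co-infinite set'', and for $n \geq 1$
\[
\chi_n(v, w_1, \dots, w_n) :\equiv \bigwedge_{\delta \in 2}\bigwedge_{\epsilon \in {}^{n}2} \forall N \exists m \Big( m > N \wedge v(m) = 1 - \delta \wedge \bigwedge_{i = 1}^{n} w_i(m) = 1 - \epsilon(i)\Big),
\]
expressing ``every Boolean combination of $v$ with $w_1, \dots, w_n$ in which $v$ genuinely occurs is infinite''. The verification to carry out is that, for $\F$ of type $\t$ (so the coded family $\A$ is already independent), a real $g$ satisfies all the $\chi_n$ precisely when $\A \cup \simpleset{g}$ is independent and properly extends $\A$: combinations lying within $\A$ are covered by the $\psi_n$, those involving only $g$ by $\chi_0$, and the mixed ones by $\chi_n$ for $n \geq 1$, so together they assert independence of $\A \cup \simpleset{g}$. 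Hence $g$ is an intruder iff it witnesses non-maximality, and a family of type $\t$ without intruders is exactly a maximal independent family.

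The argument is in the end routine coding, so I do not expect a genuine obstacle; the one point deserving care is that distinctness of $g$ from the members of $\A$ need \emph{not} be imposed as a separate clause, because it is automatic. Indeed, the instance of $\chi_1(g, A_i)$ demanding that $\set{m}{g(m) = 1 \wedge A_i(m) = 0}$ be infinite already fails when $g = A_i$, since that combination is empty. Thus the intruder condition simultaneously encodes independence of the extension and genuine enlargement of the family, and no extra inequality is needed.
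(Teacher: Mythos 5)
Your proof is correct and takes essentially the same route as the paper: both express independence of each finite subfamily as a finite conjunction, over all $2^{n+1}$ Boolean patterns, of arithmetical ``this combination is infinite'' statements, with the $\chi_n$ doing the same for the combinations involving $v$ (so that, as you note, an intruder is automatically not already in the family). The only difference is that you code sets by characteristic functions whereas the paper reuses the increasing-enumeration coding from the mad family section; this is immaterial.
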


	\begin{proof}
		Using the same coding of $\infsubset{\omega}$ by reals as for mad families let $\psi_0(w_0)$ express \textquoteleft $w_0$ codes an infinite subset of $\omega$'.
		For $n > 0$ there is an arithmetical formula $\psi_n(w_0, \dots, w_n)$ expressing
		\begin{align*}
			&(\ran(w_0) \cap \ran(w_1) \cap \dots \cap \ran(w_n) \text{ is infinite})\\
			\text{ and } &(\ran(w_0) \cap \ran(w_1) \cap \dots \cap \ran(w_n)^c \text{ is infinite})\\
			\text{ and } &\dots\\
			\text{ and } &(\ran(w_0)^c \cap \ran(w_1)^c \cap \dots \cap \ran(w_n)^c \text{ is infinite}).
		\end{align*}
		Analogously, we let $\chi_0(v)$ express \textquoteleft$v$ codes an infinite subset of $\omega$' and for $n > 0$ choose $\chi_n(v, w_1, \dots, w_n)$ expressing
		\begin{align*}
			&(\ran(v) \cap \ran(w_1) \cap \dots \cap \ran(w_n) \text{ is infinite})\\
			\text{ and } &(\ran(v) \cap \ran(w_1) \cap \dots \cap \ran(w_n)^c \text{ is infinite})\\
			\text{ and } &\dots\\
			\text{ and } &(\ran(v)^c \cap \ran(w_1)^c \cap \dots \cap \ran(w_n)^c \text{ is infinite}).
	\end{align*}
		Thus, maximal independent families are an arithmetical type.
	\end{proof}

	\begin{definition}
		We say a subset $\A \subseteq \infsubset{\omega}$ satisfies the strong finite intersection property (SFIP) iff $\bigcap \A_0$ is infinite for all $\A_0 \in \finsubset{\A}$.
		In this case we define the generated filter of $\A$ by
		$$
			\simpleseq{\A} := \set{C \subseteq \omega}{\exists \A_0 \in \finsubset{\A} \ \bigcap \A_0 \subseteq C}.
		$$
		We call $\A$ an ultrafilter subbasis iff $\simpleseq{\A}$ is an ultrafilter.
		The corresponding cardinal characteristic is the ultrafilter number $\u$:
		\[
			\u := \min{\set{\left|\A\right|}{\A \text{ is an ultrafilter subbasis}}}.
		\]
	\end{definition}
	
	\begin{proposition}
		Ultrafilter subbases are an arithmetical type.
	\end{proposition}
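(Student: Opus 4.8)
The plan is to follow the template just used for maximal independent families, reusing the coding of $\infsubset{\omega}$ by reals from the mad family case, under which a real $v$ codes the infinite set $\ran(v)$ (so $v$ is a strictly increasing enumeration of its range). Under this coding the domain formulas $\psi_n$ should encode the strong finite intersection property, while the intruder formulas $\chi_n$ should encode that the coded set and its complement both avoid the generated filter $\simpleseq{\F}$.

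First I would handle the domain. Let $\psi_0(w_0)$ express \textquoteleft $w_0$ codes an infinite subset of $\omega$' (i.e.\ $w_0$ is strictly increasing), exactly as for mad and independent families. For $n \ge 1$ let $\psi_n(w_0, \dots, w_n)$ be $\forall N\, \exists m\,(m > N \wedge \bigwedge_{i=0}^{n} \exists k\ w_i(k) = m)$, expressing \textquoteleft $\bigcap_{i=0}^{n} \ran(w_i)$ is infinite'. Quantifying over all $n$ and all $(n{+}1)$-element subsets then says precisely that every finite subfamily has infinite intersection, i.e.\ $\F$ has SFIP; hence $\dom(\t)$ is the class of SFIP families, which is exactly the class of $\F$ for which $\simpleseq{\F}$ is a proper filter. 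Each $\psi_n$ is plainly arithmetical, being a finite Boolean combination of the $\Sigma^0_1$ conditions \textquoteleft $m \in \ran(w_i)$' under integer quantifiers.

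For the intruder I would read off the description from the introduction: $g$, coding $A := \ran(g)$, should witness that neither $A$ nor $A^c$ lies in $\simpleseq{\F}$. Since $A \in \simpleseq{\F}$ iff some finite intersection $\bigcap_{i=1}^{n}\ran(f_i)$ is contained in $A$, the correct per-subset condition is the conjunction \textquoteleft $\bigcap_{i=1}^{n}\ran(w_i) \not\subseteq \ran(v)$ and $\bigcap_{i=1}^{n}\ran(w_i) \not\subseteq \ran(v)^c$'. Concretely I would set $\chi_0(v)$ to express \textquoteleft $v$ codes an infinite subset of $\omega$ with $\ran(v) \ne \omega$' (this is the $n=0$ instance, where $\bigcap \emptyset = \omega$, and it also rules out reals not coding an infinite set), and for $n \ge 1$ let $\chi_n(v, w_1, \dots, w_n)$ be $\exists m\,(\bigwedge_{i=1}^n \exists k\ w_i(k)=m \wedge \neg\exists k\ v(k)=m) \wedge \exists m\,(\bigwedge_{i=1}^n \exists k\ w_i(k)=m \wedge \exists k\ v(k)=m)$, again arithmetical. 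Quantifying over all $n$ and all subsets, \textquoteleft $g$ is an intruder' then says exactly $A \notin \simpleseq{\F}$ and $A^c \notin \simpleseq{\F}$.

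The one genuinely non-syntactic point --- and the step I expect to need the most care --- is verifying that \textquoteleft $\F$ of type $\t$ has no intruder' is equivalent to \textquoteleft $\simpleseq{\F}$ is an ultrafilter', given that our coding only ranges over infinite sets. The forward direction is clear. For the converse I would use the complement trick: if $\simpleseq{\F}$ is not an ultrafilter, pick $A \subseteq \omega$ with $A, A^c \notin \simpleseq{\F}$; if $A$ is infinite it is coded by some strictly increasing $g$, and if $A$ is finite then $A^c$ is cofinite, hence infinite and coded by some $g$, while $A = (A^c)^c$ is finite and so not in $\simpleseq{\F}$ (every filter element is infinite under SFIP). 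In either case the resulting $g$ codes an infinite set with both it and its complement outside $\simpleseq{\F}$, i.e.\ an intruder, so nothing is lost by restricting to infinite codes. This completes the identification of ultrafilter subbases with the SFIP families of type $\t$ admitting no intruder, and hence shows they form an arithmetical type.
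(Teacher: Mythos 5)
Your proposal is correct and follows essentially the same route as the paper: the same coding, the same $\psi_n$ expressing that finite intersections are infinite, and the same $\chi_n$ expressing $\bigcap_{i}\ran(w_i) \not\subseteq \ran(v)$ and $\bigcap_{i}\ran(w_i) \not\subseteq \ran(v)^c$. The extra care you take (strengthening $\chi_0$ to exclude $\ran(v)=\omega$ and checking that restricting intruders to infinite coded sets loses nothing via the complement trick) goes slightly beyond the paper's terse verification but does not change the argument.
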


	\begin{proof}
		Using the same coding of $\infsubset{\omega}$ by reals as for mad families let $\psi_0(w_0)$ express \textquoteleft $w_0$ codes an infinite subset of $\omega$'.
		For $n > 0$ there is an arithmetical formula $\psi_n(w_0, \dots, w_n)$ expressing
		$$
			\bigcap_{i = 0}^n \ran(w_i) \text{ is infinite}.
		$$
		Analogously, let $\chi_0(v)$ express \textquoteleft$v$ codes an infinite subset of $\omega$'.
		Furthermore, for $n > 0$ there is an arithmetical formula $\chi_n(v, w_1, \dots, w_n)$ expressing
		$$
			\bigcap_{i = 1}^n \ran(w_i) \not\subseteq \ran(v) \text{ and } \bigcap_{i = 1}^n \ran(w_i) \not\subseteq \ran(v)^c
		$$
		Thus, ultrafilter subbases are an arithmetical type.
	\end{proof}
	
	Applying, Theorem~\ref{THM_MainTheorem} to these two cases yields:
	
	\begin{corollary}\label{COR_UANDI}
		Every $\SS^{\aleph_0}$-indestructible independent family and every $\SS^{\aleph_0}$-indestructible ultrafilter is universally Sacks-indestructible.
	\end{corollary}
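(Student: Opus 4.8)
The plan is to derive the statement as an immediate instance of the Main Theorem~\ref{THM_MainTheorem}. The two preceding propositions establish that both maximal independent families and ultrafilter subbases are arithmetical types in the sense of Definition~\ref{DEF_type}, by exhibiting the relevant sequences $(\psi_n)_{n<\omega}$ and $(\chi_n)_{n<\omega}$. Since Theorem~\ref{THM_MainTheorem} asserts that any $\SS^{\aleph_0}$-indestructible family of an arithmetical type is universally Sacks-indestructible, applying it to each of these two types yields the corollary directly. Thus the only work remaining is to check that the abstract framework is being applied to the correct objects.

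First I would address the coding issue exactly as in Section~\ref{SECSUB_MAD}. A family $\A\subseteq\infsubset\omega$ does not literally live in $\P(\bairespace)$, so one codes each $A\in\A$ by a real (e.g.\ its increasing enumeration or characteristic function) and applies the theorem to the coded family. Because this coding is arithmetical and hence absolute between transitive models, $\SS^{\aleph_0}$-indestructibility of the coded family is equivalent to $\SS^{\aleph_0}$-indestructibility of $\A$ itself, and likewise for universal Sacks-indestructibility; so the coded and uncoded versions of the statement coincide.

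Second, I would verify that the abstract notion of intruder supplied by the formulas $(\chi_n)_{n<\omega}$ matches the intended combinatorial notion in each case. For independent families an intruder $g$ is precisely a coded infinite set $A$ for which every Boolean combination $\ran(g)^{(c)}\cap\ran(f_1)^{(c)}\cap\dots\cap\ran(f_n)^{(c)}$ remains infinite, i.e.\ a witness that $\A\cup\simpleset{A}$ is still independent and hence that $\A$ is not maximal. For ultrafilter subbases an intruder is precisely a coded set $A$ with $\bigcap_{i=1}^n \ran(f_i)\not\subseteq \ran(g)$ and $\bigcap_{i=1}^n \ran(f_i)\not\subseteq \ran(g)^c$ for all finite subfamilies, which says exactly that neither $A$ nor $A^c$ lies in the generated filter $\simpleseq{\A}$. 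Hence $\SS^{\aleph_0}$-indestructibility of $\A$ as an ultrafilter subbasis is the same as the absence of intruders after forcing with $\SS^{\aleph_0}$, which is the hypothesis of Theorem~\ref{THM_MainTheorem}.

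There is no genuinely hard step here: all of the substance sits in Theorem~\ref{THM_MainTheorem} (through Lemma~\ref{LEM_ForcingToAbsolute} and continuous reading of names) and in the two propositions verifying arithmetical definability. The only point requiring care is the bookkeeping just described --- that ``indestructible ultrafilter'' is faithfully captured by ``no intruders for the ultrafilter subbasis'' and that the real-coding of subsets of $\omega$ does not disturb indestructibility --- after which the corollary follows by a single invocation of the Main Theorem for each of the two types.
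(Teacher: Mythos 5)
Your proposal is correct and matches the paper's own argument: the corollary is obtained by a direct application of Theorem~\ref{THM_MainTheorem} to the two preceding propositions showing that maximal independent families and ultrafilter subbases are arithmetical types, with the coding of subsets of $\omega$ by reals handled exactly as in the mad-family case. The additional bookkeeping you spell out (absoluteness of the coding and the match between the $\chi_n$ and the intended notion of intruder) is implicit in the paper but entirely in the same spirit.
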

	
	In \cite{ShelahUlessI} Shelah implicitly constructed a universally Sacks-indestructible maximal independent family, whereas in \cite{Laver} Laver proved that every selective ultrafilter is preserved by any product of Sacks-forcing.
	Thus, together with the corollary above we obtain another proof that every selective ultrafilter is universally Sacks-indestructible.

	Hence, universally Sacks-indestructible independent families and ultrafilters may exist, but both of these construction refer to some kind of selectivity.
	Finally, we show that the colouring principle ${\sf HL}_\omega$ proven by Laver in \cite{Laver} can be used to prove $\eoi(\u)$.
	Hence, we may also directly construct a universally Sacks-indestructible ultrafilter under ${\sf CH}$ without using selectivity.
	We use the following notion introduced by Laver in \cite{Laver}.
	
	\begin{definition}
		For $p \in \SS$ let $p^{(n)}$ be the $n$-th level of $p$.
		For $A \subseteq \omega$ and $p \in \SS^{\aleph_0}$ let
		$$
		{\bigotimes}^A p := \bigcup_{n \in A} \bigotimes_{i < \omega} p(i)^{(n)},
		$$
		where $\bigotimes_{i < \omega} p(i)^{(n)}$ is the cartesian product.
	\end{definition}
	
	\begin{lemma}[$\text{HL}_\omega$, Laver, \cite{Laver}]
		Let $p \in \SS^{\aleph_0}$ and $c: {\bigotimes}^\omega p \to 2$.
		Then there is $q \extends p$ and $A \subseteq \omega$ such that $c \restr {\bigotimes}^Aq$ is monochromatic.
	\end{lemma}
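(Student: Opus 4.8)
The plan is to reduce this infinite-dimensional partition result to the classical finite-dimensional Halpern--Läuchli theorem by a fusion argument along the coordinates, and throughout I read ``$A \subseteq \omega$'' as ``infinite $A$'': otherwise one may take $A = \simpleset{n}$ and replace each $q(i)$ by the subtree $p(i)_s$ through a single length-$n$ node $s$, so that $q(i)^{(n)}$ is a singleton and ${\bigotimes}^A q$ is a single point, making the statement vacuous. As a black box I would invoke the finite Halpern--Läuchli theorem in its strong-subtree form: for perfect trees $T_0, \dots, T_{d-1}$ and any finite colouring of $\bigcup_n \prod_{i<d} T_i^{(n)}$, there are perfect $S_i \extends T_i$ with a common infinite level set on which the colouring is constant, and this can be arranged above prescribed stems and beyond any prescribed level.

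The construction of $q$ and $A$ would proceed by a fusion recursion in $\SS^{\aleph_0}$, using the ordering $\extends_n$ together with the fusion lemma from the preliminaries and Lemma~\ref{LEM_ExtendDenseOpen}. At stage $n$ one will have committed to the splitting structure of the coordinates $i < n$ below some level and to an initial segment of $A$; one then applies finite Halpern--Läuchli to the first $n$ coordinates to add the next element to $A$ and to refine the splitting of $q(0), \dots, q(n-1)$, recording the colour $j_n$ returned by that application. A final pigeonhole over $n < \omega$ isolates a single colour $j \in 2$ realised cofinally often; one thins $A$ to the corresponding levels, and the diagonal intersection of the partial conditions is the desired $q \extends p$.

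The main obstacle, which is the entire content of Laver's theorem, is that the colour $c(\sigma)$ of a tuple $\sigma \in \prod_{i<\omega} q(i)^{(n)}$ depends on \emph{all} coordinates $\sigma(i)$, so monochromaticity of the genuine $\omega$-product ${\bigotimes}^A q$ is never a formal consequence of monochromaticity of any finite sub-product, and at each finite stage of the recursion the tail coordinates $i \geq n$ remain infinite and undecided. Thus the finite theorem cannot simply be iterated coordinate-by-coordinate; one must extend it to handle $\omega$ trees at once, controlling the colour uniformly across the infinite tail and arranging the fusion so sparsely — both in how thin the subtrees become and in how sparse $A$ is chosen — that later shrinking of the tail cannot disturb a colour fixed at an earlier stage. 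I would expect essentially all of the work to be concentrated in this coherence, ensuring that the single colour $j$ extracted by the pigeonhole is genuinely attained on the full product ${\bigotimes}^A q$ rather than on finite projections; having isolated this point, I would follow Laver's argument in \cite{Laver} to complete the proof.
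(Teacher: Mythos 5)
The paper does not prove this lemma at all: it is imported verbatim from Laver's paper \cite{Laver} as a black box (the proof the authors supply immediately afterwards is for the subsequent corollary, which strengthens $\text{HL}_\omega$ from $\omega$ and $2$ to arbitrary $A$ and $k$, not for $\text{HL}_\omega$ itself). So there is no internal argument to measure your proposal against, and simply citing Laver, as the paper does, would already have been a complete answer.

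Taken as a standalone proof, your proposal has a genuine gap, and you have located it yourself: the fusion-plus-finite-Halpern--L\"auchli-plus-pigeonhole skeleton does not by itself yield monochromaticity of ${\bigotimes}^A q$, because the colour of a tuple depends on all $\omega$ coordinates and at every finite stage the tail coordinates remain uncommitted; your final paragraph concedes that the required coherence is ``the entire content of Laver's theorem'' and defers it to \cite{Laver}. That deferral is the whole theorem: Laver's actual argument does not proceed by iterating the finite-dimensional theorem coordinate-by-coordinate, but by a genuinely infinite-dimensional dense-set formulation of Halpern--L\"auchli for $\omega$ trees combined with a fusion tailored to it, and no amount of sparseness in the choice of $A$ or thinness of the subtrees rescues the finite-stage approach on its own. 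Your preliminary observation that $A$ must be read as infinite (else the statement trivializes by shrinking a level of each $q(i)$ to a singleton) is correct and worth recording, since the paper's statement omits the word ``infinite'' even though the subsequent application to the name $\dot{B}$ requires it.
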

	
	The following strengthening is implicit in \cite{Laver}, but for completeness we provide a proof:
	
	\begin{corollary}
		Let $p \in \SS^{\aleph_0}$, $A \subseteq \omega$, $k < \omega$ and $c: {\bigotimes}^A p \to k$.
		Then there is $q \extends p$ and $B \subseteq A$ such that $c \restr {\bigotimes}^Bq$ is monochromatic.
	\end{corollary}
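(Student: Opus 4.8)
The plan is to prove the statement by induction on the number of colours $k$, invoking the preceding Lemma ($\mathsf{HL}_\omega$) only in its two-colour form. The base case $k=1$ is trivial: a colouring into a single colour is already monochromatic, so one takes $q:=p$ and $B:=A$. For the inductive step I assume the claim for $k-1$ colours and let $c:{\bigotimes}^A p\to k$ be given. I split the colours into the two blocks $\simpleset{0}$ and $\simpleset{1,\dots,k-1}$ and define a two-colouring $d:{\bigotimes}^A p\to 2$ by $d(\bar s):=0$ if $c(\bar s)=0$ and $d(\bar s):=1$ otherwise. Applying $\mathsf{HL}_\omega$ to $d$ produces $q_0\extends p$ and an infinite $A_0\subseteq A$ on which $d\restr{\bigotimes}^{A_0}q_0$ is constant.

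Now two cases arise. If the constant value is $0$, then $c$ is constantly $0$ on ${\bigotimes}^{A_0}q_0$ and we are finished with $q:=q_0$ and $B:=A_0$. Otherwise $c$ maps ${\bigotimes}^{A_0}q_0$ into $\simpleset{1,\dots,k-1}$, so after relabelling it is a colouring into $k-1$ colours; the induction hypothesis applied to $q_0$, $A_0$ and this colouring yields $q\extends q_0\extends p$ and an infinite $B\subseteq A_0\subseteq A$ with $c\restr{\bigotimes}^B q$ monochromatic. Since each step passes from an infinite set to an infinite subset, the final $B$ is again infinite, and $B\subseteq A$ as required. Thus the colour-reduction part of the argument is an entirely routine induction.

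The one point that needs care — and the step I expect to be the main obstacle — is the passage from the literal statement of $\mathsf{HL}_\omega$, in which the monochromatic set $A'\subseteq\omega$ is chosen freely among \emph{all} levels, to the relativised form used above, where the monochromatic set $A_0$ must be a subset of the \emph{prescribed} infinite level set $A$. I would obtain this by rerunning Laver's fusion argument for $\mathsf{HL}_\omega$ through the levels enumerated by $A$: writing $A=\simpleset{a_0<a_1<\dots}$, at the $m$-th stage of the fusion one treats the $a_m$-th level of the trees rather than the $m$-th, so that the monochromatic set produced automatically lands inside $A$. This relativisation is precisely what is implicit in \cite{Laver}. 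A partial self-contained shortcut is to extend $c$ to a two-colouring $\tilde c$ on all of ${\bigotimes}^\omega p$ by a fixed dummy value on the levels outside $A$; when $\mathsf{HL}_\omega$ returns the non-dummy colour the monochromatic set is forced into $A$, which settles one of the two cases directly. Handling the remaining case uniformly, however, is exactly where the relativised form is genuinely needed, so I would present the rerun of Laver's proof as the clean route and then appeal to the induction above.
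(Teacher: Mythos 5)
Your colour-reduction induction is correct and is essentially the argument in the paper (the paper merges the top two colours into one and recurses, you split off colour $0$ from the rest; the difference is immaterial). The real content of the corollary, as you yourself diagnose, is the relativisation of $\text{HL}_\omega$ from the full level set $\omega$ to a prescribed $A\subseteq\omega$, and this is exactly where your proposal stops short of a proof: ``rerunning Laver's fusion argument through the levels enumerated by $A$'' amounts to asserting a strengthened, unproved form of the lemma and reopening the proof of a deep theorem, while your dummy-colour shortcut, as you concede, settles only one of the two cases. As written, the central step is left unestablished.

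The paper closes this gap with a short compression argument that uses $\text{HL}_\omega$ strictly as a black box, and you should either supply it or genuinely carry out the relativised fusion. Enumerate $A=\set{a_n}{n<\omega}$ increasingly and thin each coordinate to $p'(i)\extends p(i)$ splitting slowly enough that the $a_n$-th level of $p'(i)$ has at most $2^n$ nodes. Then choose a $\trianglelefteq$-order-preserving map $\Phi_i:p'(i)\to\fincantorspace$ carrying the $a_n$-th level injectively into the $n$-th level of $\fincantorspace$, so that $r(i):=\ran(\Phi_i)$ is again a Sacks condition; transfer $c$ along the $\Phi_i$ to a two-colouring $d$ of ${\bigotimes}^\omega r$; apply $\text{HL}_\omega$ verbatim to obtain $r'\extends r$ and $B\subseteq\omega$ with $d$ monochromatic on ${\bigotimes}^{B}r'$; and pull back through the $\Phi_i$ to get $q\extends p$ and $C:=\set{a_n}{n\in B}\subseteq A$ with $c\restr{\bigotimes}^{C}q$ monochromatic. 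With this in place of your appeal to a rerun of Laver's proof, your induction goes through unchanged.
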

	
	\begin{proof}
		First, we prove the statement for $k = 2$, so let $p \in \SS^{\aleph_0}$, $A \subseteq \omega$ and $c: {\bigotimes}^Ap \to 2$.
		Let $\seq{a_n}{n < \omega}$ be the increasing enumeration of $A$.
		For every $i < \omega$ we may choose $p'(i) \in \SS$ such that $p'(i) \extends p(i)$ and $\left|p(i)^{(a_n)}\right| \leq 2^n$ for all $n < \omega$.
		Then, we may choose an $\trianglelefteq$-order-preserving map $\Phi_i:p'(i) \to \fincantorspace$ with $\Phi_i[p'(i)^{(a_n)}] \subseteq (\fincantorspace)^{(n)}$ and $\Phi_i$ is injective on $p'(i)^{(a_n)}$ for all $n < \omega$.
		Then, $\ran(\Phi_i) \in \SS$, so define $r \in \SS^{\aleph_0}$ by $r(i) := \ran(\Phi_i)$ for $i < \omega$.
		Next, define $d:{\bigotimes}^\omega r \to 2$ by
		$$
		d(s) := c(t), \text{ where } t \text{ is the unique element of } {\bigotimes}^Ap \text{ with } \Phi_i(t(i)) := s(i).
		$$
		Now, by $\text{HL}_\omega$ choose $r' \extends r$ and $B \subseteq \omega$ such that $d \restr {\bigoplus}^Br'$ is monochromatic.
		Finally, for every $i < \omega$ let $q(i)$ be the preimage of $r'(i)$ under $\Phi_i$.
		Then $q(i) \in \SS$ and $q(i) \extends p'(i)$.
		Further, let $C := \set{a_n}{n \in B}$.
		Then, $q \in \SS$, $q \extends p$, $C \subseteq A$ and $c \restr {\bigoplus}^Cq$ is monochromatic.
		
		Next, we inductively prove the statement for higher $k$, so let $k + 1 > 2$, $p \in \SS^{\aleph_0}$, $A \subseteq \omega$ and $c:{\bigotimes}^Ap \to k + 1$.
		Define $c:{\bigotimes}^Ap \to k$ by
		$$
		d(s) := \max(c(s), k-1).
		$$
		By induction we may choose $q \extends p$ and $B \subseteq A$ such that $d \restr {\bigotimes}^Bq$ is monochromatic.
		In case that $d \restr {\bigotimes}^Bq = l$ for some $l < k$ we have $c \restr {\bigotimes}^Bq = d \restr {\bigotimes}^Bq$ is monochromatic.
		Otherwise, we have $c \restr {\bigotimes}^Bq \to \simpleset{k, k-1}$, so by case $k = 2$ we may choose $C \subseteq B$ and $r \extends q$ such that $c \restr {\bigotimes}^Cr$ is monochromatic.
	\end{proof}
	
	\begin{lemma}
		For every $A \in \infsubset{\omega}$ and $\SS^{\aleph_0}$-name $\dot{B}$ for a subset of $\omega$ we have
		$$
		\SS^{\aleph_0} \forces \exists B \in (\infsubset{A})^V \text{ with } B \subseteq \dot{B} \text{ or } B \subseteq \dot{B}^c
		$$
	\end{lemma}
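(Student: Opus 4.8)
The plan is to combine continuous reading of names with the strengthened colouring Corollary proved just above. First I would invoke Lemma~\ref{LEM_ContinuousReadingOfNames} to pass to a condition $p$ below which the characteristic function $\chi_{\dot B}$ of $\dot B$ is read continuously, say $p \forces \chi_{\dot B} = (f^* \circ \dot e^{p, \Sigma})(s_\gen \restr \dom(p))$ for some code $f$. The crucial observation is that for the \emph{product} $\SS^{\aleph_0}$ the branch space $[p] = \prod_{i < \omega}[p(i)]$ is compact, being a product of compact subsets of $\cantorspace$, and $\dot e^{p, \Sigma}$ is a ground model homeomorphism. Hence $x \mapsto \chi_{\dot B}(x)(n)$ is a continuous $\simpleset{0,1}$-valued function on the compact space $[p]$, so for each $n$ there is a \emph{uniform} $m(n) < \omega$ such that whether $n \in \dot B$ is already decided by $x \restr m(n) \times m(n)$, i.e.\ by the first $m(n)$ coordinates up to level $m(n)$.

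Next I would run a fusion $p = p_0 \ge p_1 \ge \cdots$ with $p_{n+1} \le_n p_n$ (using Lemma~\ref{LEM_ExtendDenseOpen} and the fusion lemma for $\SS^{\aleph_0}$) to obtain $p' \le p$ whose level structure is aligned with the reading levels $m(n)$ in the following sense: for every $n$ and every tuple $s \in \bigotimes_{i < \omega} p'(i)^{(n)}$, the restricted condition $p' \restr s$, restricting each coordinate below $s(i)$, already decides the statement $n \in \dot B$. This is achievable precisely because, by the previous paragraph, only the finitely many coordinates $i < m(n)$ and their levels below $m(n)$ are relevant, so it suffices to thin each coordinate tree (replacing $m$ by its increasing majorant) so that its $n$-th splitting level sits past length $m(n)$ in the relevant coordinates. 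With this in hand I define a $2$-colouring $c : \bigotimes^\omega p' \to 2$ by setting $c(s) = 1$ iff the level-$n$ tuple $s$ decides $n \in \dot B$ positively, where $n$ is the common level of $s$.

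Then I would apply the strengthened colouring Corollary, the finite-colour version of $\mathrm{HL}_\omega$ established above, to $c$ restricted to $\bigotimes^A p'$, obtaining $q \le p'$ and an infinite $B \subseteq A$ with $c \restr \bigotimes^B q$ monochromatic. Since $q(i)^{(n)} \subseteq p'(i)^{(n)}$, every level-$n$ tuple of $q$ is also one of $p'$ and hence still decides $n \in \dot B$; as the generic branches through $q$ meet a unique such tuple at each level $n \in B$, monochromaticity with colour $1$ yields $q \forces B \subseteq \dot B$, while colour $0$ yields $q \forces B \subseteq \dot B^c$. In either case $q \forces \exists B \in (\infsubset{A})^V\,(B \subseteq \dot B \text{ or } B \subseteq \dot B^c)$. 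Since the starting condition was arbitrary, such $q$ are dense below any condition, and therefore the displayed statement is forced by $\SS^{\aleph_0}$.

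The main obstacle is the fusion of the second paragraph: one must synchronise the abstract level indexing of $\bigotimes^\omega p'$ with the amount of generic information needed to read $\dot B(n)$ — uniform in $x$ thanks to the compactness of $[p]$ — so that a single full tuple at level $n$ decides $n \in \dot B$ despite involving all $\omega$ coordinates. Getting the colouring definite on \emph{all} of $\bigotimes^A p'$, rather than only on a dense set of tuples, is exactly what makes the clean application of the colouring Corollary, and hence the transfer of monochromaticity into a forcing statement, go through. A minor point to record is that the colouring theorem delivers an \emph{infinite} monochromatic level set $B$, as in Laver's original $\mathrm{HL}_\omega$, which is what is needed for $B \in (\infsubset{A})^V$.
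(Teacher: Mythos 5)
Your proof is correct and follows essentially the same route as the paper's: arrange by fusion that every full level-$n$ tuple of the condition decides the $n$-th membership question, colour the level tuples accordingly, and apply the strengthened $\mathrm{HL}_\omega$ corollary to extract an infinite monochromatic level set giving $B \in (\infsubset{A})^V$. The only differences are cosmetic — you spell out the continuous-reading/compactness argument behind the fusion (which the paper compresses into ``using a fusion construction''), and you index decision levels directly by $n$ where the paper uses an auxiliary sequence $c_n$.
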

	
	\begin{proof}
		Let $\seq{a_n}{n < \omega}$ enumerate $A$.
		Using a fusion construction we may choose an increasing sequence $C = \set{c_n}{n < \omega}$ and $q \extends p$ such that for every $s \in \bigotimes_{i < \omega} q(i)^{(c_n)}$ we have that $q_s$ decides \textquoteleft$a_n \in \dot{B}$\textquoteright.
		Define a colouring $c:{\bigotimes}^Cq \to 2$ for $s \in \bigotimes_{i < \omega} q(i)^{(c_n)}$ by
		$$
		c(s) =
		\begin{cases}
			0 & \text{ if } q_s \forces a_n \in \dot{B},\\
			1 & \text{ if } q_s \forces a_n \notin \dot{B}.
		\end{cases}
		$$
		By the previous corollary choose $D \subseteq C$ and $r \extends q$ such that $c \restr {\bigotimes}^Dr$ is monochromatic and let $B := \set{a_n}{c_n \in D}$, so that $B \in \infsubset{A}$.
		Finally, if $c \restr {\bigotimes}^Dr \equiv 0$, then
		$$
		q \forces B \subseteq \dot{B}
		$$
		whereas if $c \restr {\bigotimes}^Dr \equiv 1$, then
		$$
		q \forces B \subseteq \dot{B}^c
		$$
		completes the proof.
	\end{proof}
	
	Notice that for $A = \omega$ the previous corollary precisely states that $\SS^{\aleph_0}$ preserves $\infsubset{\omega}$ as an unsplit/reaping family.
	Finally, we are in position to prove $\eoi(\u)$:
	
	\begin{corollary}
		Let $p \in \SS^{\aleph_0}$, assume $\A$ is countable and satisfies the SFIP and let $\dot{B}$ be a $\SS^{\aleph_0}$-name for a subset of $\omega$.
		Then there is $q \extends p$ and $B \subseteq \omega$ such that $\A \cup \simpleset{B}$ satisfies the SFIP and
		$$
		q \forces B \subseteq \dot{B} \text{ or } B \subseteq \dot{B}^c
		$$
	\end{corollary}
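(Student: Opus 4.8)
The plan is to reduce the statement to the preceding lemma by first replacing the countable family $\A$ with a single ground-model set that encodes its filter behaviour---namely a pseudo-intersection. Since $\A$ is countable and satisfies the SFIP, I would begin by constructing $A^* \in \infsubset{\omega}$ with $A^* \subseteq^* A$ for every $A \in \A$. This is the usual diagonalisation: enumerate $\A = \set{A_n}{n < \omega}$, observe that each $B_n := A_0 \cap \dots \cap A_n$ is infinite by the SFIP, and choose a strictly increasing sequence with $x_n \in B_n$. Then $A^* := \set{x_n}{n < \omega}$ satisfies $A^* \setminus A_k \subseteq \set{x_i}{i < k}$, hence $A^* \subseteq^* A_k$ for all $k < \omega$.

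Next I would apply the previous lemma to the infinite set $A^*$ (in place of $A$) and the name $\dot{B}$, obtaining
$$
\SS^{\aleph_0} \forces \exists B \in (\infsubset{A^*})^V \ (B \subseteq \dot{B} \text{ or } B \subseteq \dot{B}^c).
$$
Since this existential quantifier ranges only over ground-model elements of the ground-model set $\infsubset{A^*}$, a routine density argument below $p$ extracts a single witness: there are $q \extends p$ and $B \in \infsubset{A^*}$ with $q \forces B \subseteq \dot{B}$ or $q \forces B \subseteq \dot{B}^c$. In fact the proof of the lemma already produces such a $q$ and $B$ explicitly (the set $B$ being read off from the monochromatic colour), so this extraction is immediate.

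It then remains to verify that $\A \cup \simpleset{B}$ satisfies the SFIP, and here the choice of $A^*$ does all the work. Because $B \subseteq A^*$ we have $B \subseteq^* A$ for every $A \in \A$. Hence for any $\A_0 \in \finsubset{\A}$ the set $B \setminus \bigcap \A_0$ is contained in $\bigcup_{A \in \A_0}(B \setminus A)$, a finite union of finite sets, so $B \cap \bigcap \A_0$ is cofinite in the infinite set $B$ and therefore infinite. Together with the SFIP of $\A$ itself, this shows that every finite intersection from $\A \cup \simpleset{B}$ is infinite, as required.

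The argument has essentially no hard step; the only point requiring care---and the one motivating the whole construction---is that applying the lemma to $A^*$ rather than to $\omega$ is precisely what guarantees the SFIP is preserved, since an arbitrary $B$ with $B \subseteq \dot{B}$ need bear no relation to $\A$. Extracting the ground-model witness from the forced existential is routine, and the SFIP verification is the short finiteness count above.
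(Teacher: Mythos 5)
Your proposal is correct and follows essentially the same route as the paper: take a pseudo-intersection of $\A$, apply the preceding lemma to it to obtain $q$ and $B$ inside the pseudo-intersection, and note that $B \pseudosubseteq A$ for all $A \in \A$ gives the SFIP. You merely spell out the diagonalisation and the witness-extraction in more detail than the paper does.
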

	
	\begin{proof}
		By assumption on $\A$ we may choose an infinite pseudo-intersection $A$ of $\A$.
		By the previous lemma choose $q \leq p$ and $B \subseteq A$ such that
		$$
		q \forces B \subseteq \dot{B} \text{ or } B \subseteq \dot{B}^c
		$$
		But $A$ is a pseudo-intersection of $\A$ and $B \subseteq A$, so that $\A \cup \simpleset{B}$ satisfies the SFIP.
	\end{proof}
	
	Hence, Theorem \ref{THM_SacksUniversal} yields another proof of the following result.
	As discussed before, selective ultrafilters may be used instead to obtain the following result:
	
	\begin{corollary}\label{COR_MCG}
		Under $\sf{CH}$ there is a universally Sacks-indestructible ultrafilter.
	\end{corollary}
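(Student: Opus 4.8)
The plan is to obtain this as a direct instance of Theorem~\ref{THM_SacksUniversal}, applied to the arithmetical type $\t$ of ultrafilter subbases. Two inputs to that theorem must be supplied. The first, that ultrafilter subbases form an arithmetical type, was established in the proposition above, with $\psi_n$ encoding the SFIP and $\chi_n$ encoding that no finite intersection from the family is contained in $\ran(v)$ or in $\ran(v)^c$. The second is $\eoi(\t)$, which I would read off directly from the preceding Corollary.

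To verify $\eoi(\t)$, suppose $\F$ is a countable family of type $\t$ (that is, an SFIP family of infinite sets), $p \in \SS^{\aleph_0}$, and $\dot{g}$ is a name for a real with $p \forces \dot{g} \text{ is an intruder for } \F$. Setting $\dot{B} := \ran(\dot{g})$, a name for a subset of $\omega$, I apply the preceding Corollary to $\F$ and $\dot{B}$ to obtain $q \extends p$ and a ground-model set $B$ such that $\F \cup \simpleset{B}$ still satisfies the SFIP and $q \forces B \subseteq \dot{B} \text{ or } B \subseteq \dot{B}^c$. Taking $f$ to be a code for $B$, the family $\F \cup \simpleset{f}$ is again of type $\t$, and in either alternative $\ran(f) = B$ is contained in $\ran(\dot{g})$ or in its complement; so $q$ forces $\chi_1(\dot{g}, f)$ to fail, witnessing that $\dot{g}$ is not an intruder for $\F \cup \simpleset{f}$. (The convention that $p \forces \dot{g} \notin V$ need not even be invoked, since the preceding Corollary is stated for an arbitrary name $\dot{B}$.) This is exactly $\eoi(\t)$.

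With both hypotheses in place, Theorem~\ref{THM_SacksUniversal} yields, under $\sf{CH}$, a universally Sacks-indestructible family $\F$ of type $\t$. The only point that still requires a remark---and the closest thing to an obstacle, though it is routine---is that such an $\F$ is genuinely an ultrafilter subbasis and not merely an SFIP family. Since universal Sacks-indestructibility subsumes preservation by the trivial forcing, $\F$ has no intruder already in the ground model; and an SFIP family of infinite sets with no intruder is precisely one for which every $C \subseteq \omega$ has $C \in \simpleseq{\F}$ or $C^c \in \simpleseq{\F}$ (the infinite-coinfinite case is the definition of an intruder, and the finite and cofinite cases follow since $\simpleseq{\F}$ contains no finite sets), so $\simpleseq{\F}$ is an ultrafilter. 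Therefore $\F$ is a universally Sacks-indestructible ultrafilter, and the entire argument is a composition of results already proved.
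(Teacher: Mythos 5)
Your proof is correct and follows the same route as the paper, which obtains the corollary by feeding the preceding corollary (as $\eoi(\u)$) into Theorem~\ref{THM_SacksUniversal}. The extra details you supply---translating the preceding corollary into the formal $\eoi$ statement via $\neg\chi_1$, and checking that an intruder-free SFIP family generates an ultrafilter---are points the paper leaves implicit, and they check out.
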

	
	\begin{question}
		Does $\eoi(\i)$ hold?
	\end{question}
	
	\subsection{Bounding, dominating and other types of families}\label{SECSUB_OTHER}
	
	Finally, in this last section we take a look at a few other types of families which do not have any restrictions as to what constitutes a family of that type as they do not have additional structure, but only the notion of an intruder really matters.
	
	\begin{definition}
		Given $f, g \in \bairespace$ we write $f \dominatedby g$ iff $f(n) < g(n)$ for all but finitely many $n < \omega$.
		A family $\B \subseteq \bairespace$ is called unbounded iff for all $g \in \bairespace$ there is $f \in \B$ such that $f \ndominatedby g$.
		The corresponding cardinal characteristic is the (un-)bounding number $\b$:
		$$
			\b := \min{\set{\left|\B\right|}{\B \text{ is an unbounded family}}}.
		$$
		A family $\D \subseteq \bairespace$ is called dominating iff for all $g \in \bairespace$ there is $f \in \D$ such that $g \dominatedby f$.
		The corresponding cardinal characteristic is the dominating number $\d$:
		\[
			\d := \min{\set{\left|\D\right|}{\D \text{ is a dominating family}}}.
		\]
	\end{definition}
	
	We verify that also these kinds of families may be considered in our framework:
	
	\begin{proposition}
		Unbounded and dominating families are arithmetical types.
	\end{proposition}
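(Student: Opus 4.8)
The plan is to follow exactly the recipe used for the previous families. Since unbounded and dominating families carry no structural constraint on the family itself (as emphasized at the start of Section~\ref{SECSUB_OTHER}), I would set $\psi_n :\equiv \top$ for all $n < \omega$, so that $\dom(\t) = \P(\P(\bairespace))$ and every subfamily is automatically of the relevant type. All the content then sits in the intruder formulas $(\chi_n)_{n < \omega}$, and the key observation is that both ``being dominated by $g$'' and ``failing to dominate $g$'' are properties involving only one family element at a time. Consequently only $\chi_1$ will be nontrivial; I would set $\chi_0(v) :\equiv \top$ and $\chi_n :\equiv \top$ for all $n > 1$.

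For the unbounded case, recall that an intruder for an unbounded family $\B$ is a real $g$ dominating all of $\B$, witnessing that $\B$ is no longer unbounded. So I would define $\chi_1(v, w_1)$ to express $w_1 \dominatedby v$, namely
$$
	\exists N \forall n (n > N \text{ implies } w_1(n) < v(n)),
$$
which is arithmetical since it uses only integer quantifiers. Checking against Definition~\ref{DEF_type}, the intruder condition $\forall n < \omega \ \forall \simpleset{f_1, \dots, f_n} \in [\F]^n \ \chi_n(g, f_1, \dots, f_n)$ collapses to $\forall f \in \B \ f \dominatedby g$, which is exactly the assertion that $g$ dominates $\B$.

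For the dominating case I would proceed dually. An intruder for a dominating family $\D$ is a real $g$ that is dominated by no element of $\D$, witnessing the failure of domination. Hence I would define $\chi_1(v, w_1)$ to express $v \ndominatedby w_1$, namely
$$
	\forall N \exists n (n > N \text{ and } v(n) \geq w_1(n)),
$$
again arithmetical. The intruder condition then collapses to $\forall f \in \D \ g \ndominatedby f$, i.e.\ that $g$ is dominated by no member of $\D$, matching the definition.

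There is no genuine obstacle here: the whole argument is a translation of the two combinatorial definitions into the format of Definition~\ref{DEF_type}. The only points to verify are that the defining conditions are pairwise (so that $\chi_1$ alone suffices and all higher $\chi_n$ may be taken to be $\top$) and that the resulting formulas contain only integer quantifiers; both are immediate from the definitions of $\dominatedby$ and $\ndominatedby$. The same template will then handle the splitting and reaping families treated in the remainder of the section in an entirely analogous fashion.
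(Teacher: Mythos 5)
Your proposal is correct and follows essentially the same route as the paper: $\psi_n :\equiv \top$ for all $n$, $\chi_n :\equiv \top$ for $n \neq 1$, and the same arithmetical $\chi_1(v,w_1)$ expressing $w_1 \dominatedby v$ for the unbounded case. Your explicit dual formula for the dominating case matches what the paper leaves as ``analogously.''
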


	\begin{proof}
		Set $\psi_n :\equiv \top$ for all $n < \omega$. 
		Analogously, set $\chi_n :\equiv \top$ for all $n \neq 1$.
		Finally, define the formula $\chi_1(v,w_1)$ to be
		$$
			\exists N \forall n (n > N \text{ implies } w_1(n) < v(n)),
		$$
		expressing \textquoteleft $w_1 \dominatedby v$'.
		Thus, unbounded families are an arithmetical type.
		Dominating families can be defined analogously.
	\end{proof}
	
	\begin{definition}
		Given $A,S \in \infsubset{\omega}$ we say $S$ splits $A$ iff $A \cap S$ and $A \cap S^c$ are infinite.
		A family $\S \subseteq \bairespace$ is called splitting iff for all $A \in \infsubset{\omega}$ there is $S \in \S$ such that $S$ splits $A$.
		The corresponding cardinal characteristic is the splitting number $\s$:
		$$
			\s := \min{\set{\left|\S\right|}{\S \text{ is a splitting family}}}.
		$$
		A family $R \subseteq \bairespace$ is called reaping or unsplit iff for all $S \in \infsubset{\omega}$ there is $R \in \R$ such that $S$ does not split $R$.
		The corresponding cardinal characteristic is the reaping number $\r$:
		$$
			\r := \min{\set{\left|\R\right|}{\R \text{ is a reaping family}}}.
		$$
	\end{definition}
	
	\begin{proposition}
		Reaping and splitting families are arithmetical types.
	\end{proposition}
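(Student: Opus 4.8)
The plan is to follow the same template as the preceding propositions in this section. Since neither splitting nor reaping families carry a structural constraint, I would set $\psi_n :\equiv \top$ for all $n < \omega$ in both cases, so that $\dom(\t) = \P(\P(\bairespace))$, and encode the relevant notion of intruder entirely through the sequence $(\chi_n)_{n < \omega}$. As for mad families, I would fix an arithmetical coding of infinite subsets of $\omega$ by strictly increasing reals, so that the range $\ran(v)$ of a code $v$ is the set it represents, and I would let $\chi_0(v)$ express \textquoteleft $v$ codes an infinite subset of $\omega$'. This forces every intruder to be infinite, recalling that the intruder condition at $n = 0$ is just $\chi_0(g)$, coming from the empty subset of $\F$.

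For splitting families, recall from the introduction that an intruder for $\S$ is an infinite set which is \emph{not} split by any member of $\S$, witnessing that $\S$ fails to be splitting. Since \textquoteleft $w_1$ does not split $v$' means \textquoteleft $\ran(v) \cap \ran(w_1)$ is finite or $\ran(v) \setminus \ran(w_1)$ is finite', I would define $\chi_1(v, w_1)$ to be the disjunction of two finiteness statements, each of the shape $\exists N \forall n\,(\dots)$, and set $\chi_n :\equiv \top$ for $n > 1$, since the not-split condition only involves one member of $\S$ at a time. Dually, for reaping families an intruder for $\R$ is an infinite set which \emph{splits every} member of $\R$; here I would let $\chi_1(v, w_1)$ express \textquoteleft $\ran(v)$ splits $\ran(w_1)$', i.e.\ \textquoteleft $\ran(w_1) \cap \ran(v)$ is infinite and $\ran(w_1) \setminus \ran(v)$ is infinite', a conjunction of two infiniteness statements, each of the shape $\forall N \exists n\,(\dots)$, again with $\chi_n :\equiv \top$ for $n > 1$.

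The remaining tasks are to check that these formulas are genuinely arithmetical and that $g$ being an intruder in the sense of Definition~\ref{DEF_type} agrees with the combinatorial definition. Both are routine: finiteness of an intersection is $\Sigma^0_2$ and infiniteness is $\Pi^0_2$, so every $\chi_n$ above lies in the arithmetical hierarchy, and unwinding the intruder condition for the single nontrivial formula $\chi_1$, quantifying over $\simpleset{f_1} \in [\F]^1$, recovers exactly \textquoteleft not split by any member' for splitting families and \textquoteleft splits every member' for reaping families. I do not expect any substantial obstacle here; the only points requiring slight care are getting the direction of the intruder right, as non-maximality of a splitting family is witnessed by an unsplit set whereas non-maximality of a reaping family is witnessed by a splitting set, and remembering to enforce infiniteness of the intruder via $\chi_0$, without which finite sets would spuriously count as intruders in the splitting case.
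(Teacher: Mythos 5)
Your proposal is correct and follows essentially the same route as the paper: $\chi_0$ forces the intruder to code an infinite subset of $\omega$, $\chi_1$ expresses \textquoteleft $w_1$ does not split $v$' for splitting families and \textquoteleft $v$ splits $w_1$' for reaping families, and all remaining formulas are trivial. The only cosmetic deviation is that the paper also lets $\psi_0(w_0)$ assert that $w_0$ codes an infinite subset of $\omega$ instead of taking every $\psi_n \equiv \top$, which changes $\dom(\t)$ slightly but has no effect on the argument or on the resulting corollary.
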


	\begin{proof}
		Using the same coding of $\infsubset{\omega}$ by reals as for mad families let $\psi_0(w_0)$ express \textquoteleft $w_0$ codes an infinite subset of $\omega$' and set $\psi_n :\equiv \top$ for $n > 0$.
		Analogously, we let $\chi_0(v)$ express \textquoteleft$v$ codes an infinite subset of $\omega$'.
		Further, there is an arithmetical formula $\chi_1(v,w_1)$ expressing
		$$
			\ran(v) \cap \ran(w_1) \text{ is finite or } \ran(v) \cap \ran(w_1)^c \text{ is finite}
		$$
		and set $\chi_n :\equiv \ \top$ for all $n > 1$.
		Thus, splitting families are an arithmetical type.
		Reaping families can be defined analogously.
	\end{proof}
	
	\begin{corollary}\label{COR_OTHER}
		Every $\SS^{\aleph_0}$-indestructible unbounded/dominating/reaping/splitting family is universally Sacks-indestructible.
	\end{corollary}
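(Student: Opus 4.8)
The plan is to read this off directly from Theorem~\ref{THM_MainTheorem} together with the two preceding propositions, so the proof will be essentially one line of bookkeeping. First I would recall that in the propositions just above we exhibited, for each of the four types---unbounded, dominating, reaping and splitting---an explicit arithmetical type $\t$, in every case with $\psi_n :\equiv \top$, so that $\dom(\t) = \P(\P(\bairespace))$ and \emph{every} family of reals is formally of type $\t$. All of the content is carried by the sequence $(\chi_n)_{n<\omega}$, and by construction the type-theoretic notion of an intruder from Definition~\ref{DEF_type} coincides with the intended combinatorial one: for unbounded families $\chi_1(v,w_1)$ asserts $w_1 \dominatedby v$, so a real $g$ is an intruder for $\B$ exactly when $g$ dominates every member of $\B$, i.e.\ precisely when $\B$ ceases to be unbounded. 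The dominating, splitting and reaping cases are completely analogous, the splitting/reaping notions being governed by the formula $\chi_1(v,w_1)$ asserting that $\ran(v) \cap \ran(w_1)$ or $\ran(v) \cap \ran(w_1)^c$ is finite.

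Next I would observe that the hypothesis of the corollary---that $\F$ is $\SS^{\aleph_0}$-indestructible as an unbounded (resp.\ dominating, reaping, splitting) family---says precisely that $\SS^{\aleph_0}$ forces $\F$ to have no intruder in the sense of the associated arithmetical type $\t$. Thus $\F$ is an $\SS^{\aleph_0}$-indestructible family of type $\t$ in the formal sense of the paper, and Theorem~\ref{THM_MainTheorem} applies verbatim to conclude that $\F$ is universally Sacks-indestructible. Running this argument once for each of the four types then finishes the proof.

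I do not expect any genuine obstacle here: all the real work was already carried out in verifying arithmetical definability in the two propositions above and in establishing the Main Theorem. The only point that requires a moment's care is confirming that the formal intruder notion matches the intended combinatorial one in each of the four cases, which is immediate from the explicit formulas $\chi_n$ written down in those propositions.
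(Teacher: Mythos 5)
Your proposal is correct and matches the paper exactly: the corollary is obtained by combining the two propositions establishing that unbounded, dominating, reaping and splitting families are arithmetical types (with $\psi_n :\equiv \top$ and the intruder notion carried by $\chi_1$) with Theorem~\ref{THM_MainTheorem}. The paper offers no further argument beyond this, so your one-line bookkeeping is precisely its proof.
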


	For example one special case is that $\bairespace$-bounding is equivalent to $\bairespace$ being preserved as a dominating family.
	In this special case proving that $\SS^{\aleph_0}$ is $\bairespace$-bounding yields another way to show that any countably supported product or iteration of Sacks-forcing of any length is $\bairespace$-bounding.
	
	It is easy to see that $\SS^{\aleph_0}$ preserves $\infsubset{\omega}$ as a splitting family and in \cite{Laver} Laver proved that $\SS^{\aleph_0}$ preserves $\infsubset{\omega}$ as a reaping family.
	Thus, in these special cases we obtain another proof that any countably supported product or iteration of Sacks-forcing of any length preserves $\infsubset{\omega}$ as a splitting and reaping family.
	
	The list of families to which we can apply Theorem~\ref{THM_MainTheorem} does not end here though.
	For example we could also apply the same arguments to evading and predicting families or witnesses for $\p$ and other types of families.
	
	\bibliographystyle{plain}
	\bibliography{refs}
	
\end{document}